\newtheorem{remark}{Remark}
\definecolor{ddarkbrown}{rgb}{0.5,0.2,0.05} \definecolor{bbluegray}{rgb}{0.05,0,0.5}
\newtheorem{theorem}{Theorem}[section]
\newtheorem{proposition}[theorem]{Proposition}
\newtheorem{definition}[theorem]{Definition}
\newtheorem{lemma}[theorem]{Lemma}
\newtheorem{corollary}[theorem]{Corollary}
\newcommand{\BEAS}{\begin{eqnarray*}}
\newcommand{\EEAS}{\end{eqnarray*}}
\newcommand{\BEA}{\begin{eqnarray}}
\newcommand{\EEA}{\end{eqnarray}}
\newcommand{\BEQ}{\begin{equation}}
\newcommand{\EEQ}{\end{equation}}
\newcommand{\BIT}{\begin{itemize}}
\newcommand{\EIT}{\end{itemize}}
\newcommand{\BNUM}{\begin{enumerate}}
\newcommand{\ENUM}{\end{enumerate}}
\newcommand{\BA}{\begin{array}}
\newcommand{\EA}{\end{array}}
\let \oldsection \section
\renewcommand{\section}{\vspace{3ex plus 1ex}\oldsection}
\def\@setdate{\@date}
\begin{document}

\title{Local and Global Uniform Convexity Conditions}

\author{Thomas Kerdreux $^{\dagger,\ast}$}
\address{Zuse Institute Berlin \& Technische Universit\"at Berlin, Germany}
\email{thomaskerdreux@gmail.com}

\author{Alexandre d'Aspremont$^{\ddagger,\mathsection}$}
\address{CNRS \& D.I., UMR 8548,\vskip 0ex
\'Ecole Normale Sup\'erieure, Paris, France.}
\email{aspremon@ens.fr}

\author{Sebastian Pokutta$^{\dagger,\ast}$}
\address{Zuse Institute Berlin \& Technische Universit\"at Berlin, Germany}
\email{pokutta@zib.de}



\keywords{}

\date{$^\dagger$Zuse Institute, Berlin, Germany.\\
\indent$^\ast$Technische Universit{\"a}t, Berlin, Germany.\\
\indent$^\ddagger$CNRS UMR 8548.\\
\indent$^\mathsection$D.I. \'Ecole Normale Sup\'erieure, Paris, France.}
\subjclass[2010]{}

\maketitle

\begin{abstract}
We review various characterizations of uniform convexity and smoothness on norm balls in finite-dimensional spaces and connect results stemming from the geometry of Banach spaces with \textit{scaling inequalities} used in analyzing the convergence of optimization methods. In particular, we establish local versions of these conditions to provide sharper insights on a recent body of complexity results in learning theory, online learning, or offline optimization, which rely on the strong convexity of the feasible set. While they have a significant impact on complexity, these strong convexity or uniform convexity properties of feasible sets are not exploited as thoroughly as their functional counterparts, and this work is an effort to correct this imbalance. We conclude with some practical examples in optimization and machine learning where leveraging these conditions and localized assumptions lead to new complexity results.
\end{abstract}

\section{Introduction}
Strong convexity or uniform convexity properties of the objective function of an optimization problem have a significant impact on problem complexity \citep{Nest15} and are heavily exploited by first order methods, notably in machine learning, with applications in various settings such as distributed optimization \citep{jaggi2014communication,lee2015distributed,ma2015adding,smith2017cocoa,stich2018local}, differential privacy \citep{talwar2014private,zhang2017efficient,chen2019renyi,iyengar2019towards,bassily2019private,kuru2020differentially,feldman2020private}, game theory \citep{du2019linear,liang2019interaction,abernethy2019last,mokhtari2020unified}.

While the impact of strong convexity or uniform convexity of the objective function is well understood. That of similar conditions on the feasible set of optimization problems is a priori just as significant but has been much less explored.
Despite the recent growing literature leveraging such set structure, which we now briefly survey, equivalent characterizations of strong convexity of sets and related weaker conditions are only sparsely covered.
This is arguably leading to some confusion, \textit{e.g.}, the notion of gauge sets introduced in \cite{abernethy2018faster} is equivalent to strong convexity \citep{Molinaro20}.
Another key motivation of our work is that, to our knowledge, only two results \citep{dunn1979rates,kerdreux2020uc} consider \emph{local} strongly convex assumptions of the constraint set to describe \emph{global} machine learning problem complexity, while these local properties have a significant impact on algorithm performance. 
This is surprising given the vast amount of literature around localized properties of objective functions, such as Kurdyka-{\L}ojasiewicz properties \citep{bolte2007lojasiewicz} for instance, leveraged in the convergence analyses of first-order optimization methods \citep{Bolte10,Attouch10,bolte2017error,roulet2020sharpness,kerdreux2019restarting,kerdreux2020accelerating}.

Uniform convexity (UC) generalizes strong convexity to more precisely quantify the curvature of a convex set, and plays a central role in many fields. For instance, the geometry of a Banach space is greatly influenced by its unit ball's uniform convexity, which notably drives the convergence behavior of martingales, and induces several concentration inequalities \citep{pisier1975martingales,pinelis1994optimum,iouditski2014primal}.

\subsection*{Gauges}
For simplicity, we focus here on compact convex sets $\mathcal{C}$ in finite-dimensional spaces. The gauge function of $\mathcal{C}$ provides a correspondence between sets and norm-like functions \citep{rockafellar1970convex} and is defined as
\begin{equation}\label{eq:gauge_function}\tag{Gauge}
\|x\|_{\mathcal{C}} \triangleq \text{inf}\big\{ \lambda \geq 0 ~|~ x\in\lambda\mathcal{C}\big\}.
\end{equation}
For simplicity again, we will only consider centrally symmetric convex bodies with nonempty interior in what follows, whose gauge function induces then a norm.

\subsection*{Uniformly Convex Sets in Optimization}
Some feasible set structures lead to accelerated convergence rates for first-order algorithms, \textit{e.g.}, projection-free algorithms.
Conditional gradients, a.k.a. Frank-Wolfe (FW) algorithms, are known to enjoy accelerated convergence rates compared to the $\mathcal{O}(1/T)$ baseline when the set is globally strongly convex \citep{polyak1966existence,demyanov1970,garber2015faster}.
However, to our knowledge, only two results in machine learning consider local strong convexity assumptions on the feasible set. 
\cite{dunn1979rates} proposes a geometrical condition on a given point $x^*\in\partial\mathcal{C}$ ensuring accelerated convergence rates for Frank-Wolfe algorithms and \cite{kerdreux2020uc} then show that this assumption is equivalent to local strong convexity and further generalizes all existing accelerated Frank-Wolfe regimes to hold also on locally uniformly convex sets.

Other projection-free algorithms exist with improved guarantees on strongly convex sets, \textit{e.g.}, for non-convex optimization \citep{rector2019revisiting}, min-max problems \citep{gidel2017frank,wang2018acceleration} or approximate Carathéodory results \citep{combettes2019revisiting}. The various equivalent definitions of strongly convex sets have also stimulated an interest in designing and analysing affine-invariant first-order methods. For instance, \cite{d2018optimal} proposed a choice of norm and prox-function in the implementation of first-order accelerated methods from \citep{nesterov2005smooth} which make these methods affine-invariant and provably optimal for optimization problems constrained on uniformly convex $\ell_p$ balls with $p>1$. \cite{kerdreux2020aff} proposed an optimal (w.r.t. known analyses) affine-invariant analysis of the affine-covariant Frank-Wolfe algorithm on strongly convex sets. Their analysis rely on assumptions that combine scaling inequalities for strongly convex feasible sets and an affine-invariant characterization of smoothness \citep{jaggi2013revisiting}. Finally, strong convexity for sets was also used outside of projection-free optimization techniques in, \textit{e.g.}, \citep{veliov2020gradient,bach2020effectiveness}.

\subsection*{Uniformly Convex Sets in Machine Learning.}
The global strong convexity of sets also characterizes performance in learning theory and online learning.
\cite{huang2016following,huang2017following} studied logarithmic regret bounds of simple algorithms for online linear learning on smooth strongly convex decisions sets. \cite{Molinaro20,kerdreux2020uc} later extended these results to non-smooth and uniformly convex sets. \citep{abbasi2009forced,rusmevichientong2010linearly} considered such assumptions of the constraint set for stochastic linear bandits and \citep{abernethy2009beating,bubeck2018sparsity} for non-stochastic linear bandits.
The global uniform convexity of the decision set has recently attracted much attention in ``online learning with a hint'', which is a multiplicative version of optimistic online learning. In this framework, regret bounds are obtained in terms of the uniform convexity power type of the decision set \citep{dekel2017online,bhaskara2020online,bhaskara2020onlineMany}.

\cite{kakade2009complexity} studied generalization bounds of low-norm linear classes. They obtain upper bounds on the Rademacher constant of the hypothesis class that depend on the strong convexity of the norm regularizing the class. However, they expressed these results in terms of the functional strong convexity of the square of the norm. In Section \ref{ssec:kakade}, we recall that this result is a quantitative corollary of known results in the geometrical study of Banach spaces: a uniformly convex space has a non-trivial Rademacher type. \cite{el2019generalization} also consider global strong convexity of the feasible region to strengthen convergence results in generalization bounds in the \emph{Predict-Then-Optimize} framework. They notably rely on a characterization of strong convexity akin to \emph{scaling inequalities} covered in \ref{itm:scaling_inequality} of Theorems \ref{th:UC_set_and_support}-\ref{thm:local_UC_smoothness_gauge}.

In online learning on Banach spaces, several works analyse regret bounds in terms of the martingale type/cotype of the space \citep{sridharan2010convex,srebro2011universality}, a property directly tied with uniform convexity. 
In fact, \citep{sridharan2010convex,srebro2011universality} relies on the fact that the martingale type of a space is related to the existence of a uniformly convex function on this space, see \citep[Theorem 1]{sridharan2010convex}. Besides, as we recall in Section \ref{ssec:kakade}, a uniformly convex space has also a Rademacher type (the reverse might not be true), a notion related to the martingale type. This martingale type structure has been leveraged in various applications in learning \citep{Schn16,kerdreux2017approximate} as it is a central tool to derive concentration inequalities \citep{pisier1975martingales,pinelis1994optimum,pisier2011martingales}. 
However, our main focus here remains on uniform convexity as it has a simple geometrical interpretation in terms of scaling inequalities with direct algorithmic consequences (items \ref{itm:scaling_inequality} in Theorems \ref{th:UC_set_and_support}-\ref{thm:local_UC_smoothness_gauge}), and admits local versions (Theorem \ref{thm:local_UC_smoothness_gauge}) which also better characterize empirical performance, as opposed to martingale type/cotype properties.

\subsection*{Contributions}
We first provide elementary proofs of various local and global equivalent characterizations of uniform convexity of sets. We then discuss applications in machine learning and cover some practical examples leveraging these alternative points of view in Section \ref{sec:applications}. Most of our results are quantitative. 

We then characterize the uniform convexity of a set in terms of the ``angles" between normal cone directions and feasible directions at boundary points. These quantifications appear regularly in convergence proofs of algorithms such as Frank-Wolfe and we call them \emph{scaling inequalities}. The link with uniform convexity is often ignored and our objective here is to explicitly quantify this connection.

Finally, we derive equivalent relationships for the localized versions of UC (see Theorem \ref{thm:local_UC_smoothness_gauge}) to better explain empirical performance in optimization methods.

\subsection*{Related Works.}
Our work connects different perspectives of uniform convexity of a set. Our Theorems \ref{th:UC_set_and_support}-\ref{thm:local_UC_smoothness_gauge} rely on several classical monographs. We refer to \citep{zalinescu1983uniformly,aze1995uniformly,zalinescu2002convex} for the study of functional uniform convexity and smoothness, to \citep{lindenstrauss2013classical,beauzamy2011introduction,deville1993smoothness,borwein2009uniformly} for the study of the geometry of Banach spaces in terms of uniform convexity and smoothness, and to \citep{pisier2011martingales} for results on type/cotype properties of a Banach space. We also invoke \citep{goncharov2017strong} for practical local characterizations of the strong convexity of sets.
Finally, we rely on \citep{rockafellar1970convex} for convex analysis references and on \citep{schneider2014convex} for convex geometry in finite dimensions. Whenever possible, we keep track of the precise reference to these monographs when establishing the results in Sections \ref{sec:3_point_of_view}-\ref{sec:local_equivalence}. In many cases, we have adapted the proofs to make the results quantitative.

\subsection*{Outline.}
In Section \ref{sec:preliminaries} we group some preliminary facts and in Section \ref{sec:3_point_of_view}, we recall the definition of uniform convexity and smoothness for functions and spaces. 
In Section \ref{sec:global_equivalence}, we present Theorem \ref{th:UC_set_and_support} stating different equivalent definitions of the uniform convexity of a norm ball in finite-dimensional spaces.
Theorem \ref{thm:local_UC_smoothness_gauge} in Section \ref{sec:local_equivalence} provides the same results but with local assumptions.
Results in Section \ref{sec:global_equivalence}-\ref{sec:local_equivalence} are self-contained and proofs are elementary. 
However, they hold even in infinite-dimensional spaces. 
Finally, in Section \ref{sec:applications}, we provide three examples in offline optimization and learning theory where these different points of view on uniform convexity lead to new results.

\subsection*{Notations}
The finite-dimensional ambient vector space is $\mathbb{R}^m$ and by $\text{Int}(\mathcal{C})$ and $\partial\mathcal{C}$, we denote the interior of $\mathcal{C}$ and the boundary of $\mathcal{C}$ respectively.
The \emph{support function} of $\mathcal{C}$ is defined as $\sigma_{\mathcal{C}}(d)\triangleq \text{sup}_{v\in\mathcal{C}} \langle v; d \rangle$.
The \emph{normal cone} of $\mathcal{C}$ at $x^*\in\mathcal{C}$ is defined as $N_{\mathcal{C}}(x^*) \triangleq \big\{ d ~| ~ \langle d; x - x^* \rangle \leq 0 ~~\forall x\in\mathcal{C} \big\}$ and the support set of $\mathcal{C}$ at $d$ is $F_{\mathcal{C}}(d)\triangleq\big\{x\in\mathcal{C}~|~\langle x;d\rangle=\sigma_{\mathcal{C}}(d)\big\}$.
We write $f^*(y)={\text{sup}}_{x\in\mathbb{R}^m} \langle x ; y\rangle - f(x)$ as the \emph{Fenchel conjugate} of $f$.
We will consider convex functions $f:\mathbb{R}^m\rightarrow \mathbb{R}$, finite everywhere and continuous. 
In particular, we then have that $f^{**}=f$.
For a norm $\|\cdot\|$, we write $\|x\|_\star \triangleq \text{sup }\big\{ \langle x; y\rangle ~|~ \|y\|\leq 1\big\}$ to denote its dual norm.
We sometimes also use $\|\cdot\|^\star$.
We use different star symbols to distinguish between dual norms and Fenchel dual, \textit{e.g.}, the Fenchel dual of a norm is not the dual norm in general.
We write $B_{\|\cdot\|}$ the unit ball and $S_{\|\cdot\|}$ the unit sphere associated to a norm $\|\cdot\|$.
We most often consider $(p,q)$ s.t. $p\geq 2$, $q\in]1,2]$ and $1/p+1/q=1$. The $p$ (resp. $q$) parameter will hence be employed in the context of uniform convexity (resp. smoothness).

\section{Preliminaries}\label{sec:preliminaries}
We restrict the discussion to finite-dimensional spaces for simplicity. 
It allows for a direct analogy of duality between a norm and its dual norm with the duality between the norm ball's gauge function and the support function of the norm ball's polar, which we detail now.
Note that results similar to Theorems \ref{th:UC_set_and_support} and \ref{thm:local_UC_smoothness_gauge} hold in infinite-dimensional Banach spaces though.
We consider centrally symmetric convex bodies $\mathcal{C}$ with non-empty interior
so that the gauge function $\|\cdot\|_{\mathcal{C}}$ of $\mathcal{C}$ is a norm \citep[Theorem 15.2.]{rockafellar1970convex}.
In particular, the unit ball (resp. the sphere) of $\|\cdot\|_{\mathcal{C}}$ corresponds to $\mathcal{C}$ (resp. $\partial\mathcal{C}$), \textit{i.e.}, $\mathcal{C}=B_{\|\cdot\|_{\mathcal{C}}}$ and $\partial\mathcal{C}=S_{\|\cdot\|_{\mathcal{C}}}$. The function $\|\cdot\|_{\mathcal{C}}$ and $\sigma_{\mathcal{C}}$ are every-where finite convex functions from $\mathbb{R}^m$ to $\mathbb{R}_+$ and, \textit{e.g.}, subdifferentiable \citep[Theorem 23.4]{rockafellar1970convex}.

A strictly convex set $\mathcal{C}$ is such that for any distinct $(x,y)\in\partial\mathcal{C}$, we have $(x+y)/2\in\mathcal{C}\setminus\partial\mathcal{C}$.
Conversely, $\mathcal{C}$ is smooth if there is only one supporting hyperplane at each boundary point of $\mathcal{C}$.
The following lemma recalls the classical relation between strict convexity of a set and differentiability of the support function \citep[Cor 1.7.3]{schneider2014convex}.

\begin{lemma}[Support/Gauge Differentiability]\label{lem:differentiability_support}
Consider $\mathcal{C}\subset\mathbb{R}^m$ a compact convex set.
$\sigma_{\mathcal{C}}$ is differentiable at $d\in\mathbb{R}^m\setminus\{0\}$ if and only if $\{y~|~\langle y; d\rangle = \sigma_{\mathcal{C}}(d)\}=\{x\}$. In that case $\nabla\sigma_{\mathcal{C}}(d)=x$.
In particular, if $\mathcal{C}$ is strictly convex, then $\sigma_{\mathcal{C}}$ is differentiable on $\mathbb{R}^m\setminus\{0\}$.
\end{lemma}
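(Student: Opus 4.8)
We need to prove the equivalence: $\sigma_{\mathcal{C}}$ is differentiable at $d \neq 0$ iff the face $F_{\mathcal{C}}(d) = \{y \mid \langle y;d\rangle = \sigma_{\mathcal{C}}(d)\}$ is a singleton $\{x\}$, in which case $\nabla\sigma_{\mathcal{C}}(d) = x$; and then deduce the strict-convexity consequence. The natural route is through the subdifferential of the support function. The key fact I would invoke (from Rockafellar, Theorem 23.4, already cited in the excerpt) is that for a convex function finite near $d$, differentiability at $d$ is equivalent to $\partial\sigma_{\mathcal{C}}(d)$ being a singleton, and then $\nabla\sigma_{\mathcal{C}}(d)$ equals that unique subgradient. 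So the whole statement reduces to the identity
\[
\partial\sigma_{\mathcal{C}}(d) = F_{\mathcal{C}}(d) = \{\, y \in \mathcal{C} \mid \langle y; d\rangle = \sigma_{\mathcal{C}}(d)\,\}.
\]

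**Proving the subdifferential identity.** This is standard but I would spell it out. Recall $\sigma_{\mathcal{C}} = \delta_{\mathcal{C}}^*$, the conjugate of the indicator function of $\mathcal{C}$, and $\delta_{\mathcal{C}}$ is a proper closed convex function since $\mathcal{C}$ is compact convex (nonempty). By the standard conjugate subgradient correspondence, $y \in \partial\sigma_{\mathcal{C}}(d)$ iff $d \in \partial\delta_{\mathcal{C}}(y)$, i.e., iff $y \in \mathcal{C}$ and $\langle d; x - y\rangle \le 0$ for all $x \in \mathcal{C}$, which says precisely $d \in N_{\mathcal{C}}(y)$, equivalently $\langle y; d\rangle = \sup_{x\in\mathcal{C}}\langle x;d\rangle = \sigma_{\mathcal{C}}(d)$. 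Hence $\partial\sigma_{\mathcal{C}}(d) = F_{\mathcal{C}}(d)$. (Alternatively, one can prove the inclusion $F_{\mathcal{C}}(d) \subseteq \partial\sigma_{\mathcal{C}}(d)$ directly from the definition of subgradient and the positive homogeneity of $\sigma_{\mathcal{C}}$, and the reverse inclusion from $\sigma_{\mathcal{C}}(d) = \langle \nabla\sigma_{\mathcal{C}}(d); d\rangle$ via Euler's identity; but the conjugacy argument is cleanest.) Combining with Rockafellar's differentiability criterion gives the first two sentences, including the formula $\nabla\sigma_{\mathcal{C}}(d) = x$.

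**The strict-convexity consequence.** Suppose $\mathcal{C}$ is strictly convex and fix $d \neq 0$. I claim $F_{\mathcal{C}}(d)$ is a singleton. Since $\mathcal{C}$ is compact, the supremum defining $\sigma_{\mathcal{C}}(d)$ is attained, so $F_{\mathcal{C}}(d) \neq \emptyset$. If it contained two distinct points $x,y$, then both lie on the supporting hyperplane $\{v \mid \langle v;d\rangle = \sigma_{\mathcal{C}}(d)\}$, hence so does their midpoint $(x+y)/2$, which therefore also lies in $F_{\mathcal{C}}(d) \subseteq \partial\mathcal{C}$. But $x, y$ are then distinct boundary points of $\mathcal{C}$ (each is in $\mathcal{C}$ and on a supporting hyperplane, hence in $\partial\mathcal{C}$) whose midpoint lies on $\partial\mathcal{C}$, contradicting strict convexity. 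So $F_{\mathcal{C}}(d)$ is a singleton and by the first part $\sigma_{\mathcal{C}}$ is differentiable at $d$; since $d \neq 0$ was arbitrary, $\sigma_{\mathcal{C}}$ is differentiable on $\mathbb{R}^m \setminus \{0\}$.

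**Expected obstacle.** There is no deep obstacle here; the only points requiring care are bookkeeping ones: making sure the conjugate-subgradient correspondence is applied to a genuinely proper closed convex function (using compactness and nonemptiness of $\mathcal{C}$), and being careful that differentiability is only asserted away from the origin, where $\sigma_{\mathcal{C}}$ is generally non-differentiable (the subdifferential there is all of $\mathcal{C}$). I would also note in passing that the same argument, read for the gauge $\|\cdot\|_{\mathcal{C}}$ in place of $\sigma_{\mathcal{C}}$ (using $\|\cdot\|_{\mathcal{C}} = \sigma_{\mathcal{C}^{\circ}}$ for the polar $\mathcal{C}^\circ$), yields the "Gauge" half of the lemma's title, i.e., smoothness of $\mathcal{C}$ corresponds to differentiability of $\|\cdot\|_{\mathcal{C}}$ — this is exactly the dual statement and needs no new idea.
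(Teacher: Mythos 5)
Your proof is correct and fills in what the paper only cites (Schneider, Cor.~1.7.3) without proof. The route you take --- identifying $\partial\sigma_{\mathcal{C}}(d)$ with the face $F_{\mathcal{C}}(d)$ via the conjugate-subgradient correspondence, then invoking the ``differentiable iff unique subgradient'' criterion, and finally the midpoint argument for strict convexity --- is the standard one and is exactly what the cited reference does in spirit. One small correction: the differentiability criterion you need (a finite convex function is differentiable at a point iff its subdifferential there is a singleton, in which case the gradient is that subgradient) is Rockafellar's Theorem~25.1, not Theorem~23.4; the latter gives nonemptiness and boundedness of $\partial f$ on the relative interior of $\operatorname{dom} f$ and does not by itself yield differentiability. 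You also implicitly read the set $\{y\mid \langle y;d\rangle=\sigma_{\mathcal{C}}(d)\}$ as restricted to $\mathcal{C}$ (otherwise it is a hyperplane, never a singleton); that is the intended meaning and worth stating explicitly if this were to replace the citation.
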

The \emph{polar} of $\mathcal{C}$ is defined as $\mathcal{C}^\circ = \big\{ d\in \mathbb{R}^m~|~ \langle x; d\rangle \leq 1 ~~\forall x\in\mathcal{C} \big\}$. 
Importantly, the support and gauge function are dual to each other via the polar operation, \textit{i.e.},  $\sigma_{\mathcal{C}}(\cdot)=\|\cdot\|_{\mathcal{C}^\circ}$ \citep[Theorem 14.5.]{rockafellar1970convex}.
We systematically write $x$ (resp. $d$) for an element of $\mathcal{C}$ (resp. $\mathcal{C}^\circ$).
This duality parallels that of a norm and its dual.  Indeed, if $\|\cdot\|_{\mathcal{C}}$ is a norm, then $\|\cdot\|_{\mathcal{C}^\circ}$ is a norm and $\|\cdot\|_{\mathcal{C}}^\star = \|\cdot\|_{\mathcal{C}^\circ}$ \citep[Cor 15.1.2]{rockafellar1970convex}.
Finally, the following classical lemma will be particularly useful \citep[Lemma 2]{asplund1968frechet}.
\begin{lemma}\label{lem:power_norm_fenchel}
Let $p,q >1$ s.t. $\frac{1}{p} + \frac{1}{q}=1$. 
Then, for any $\alpha>0$, we have
\[
\big(\alpha\sigma_{\mathcal{C}}^p\big)^*(\cdot) = \Big[\frac{1}{(\alpha p)^{1/(p-1)}} - \frac{\alpha}{(\alpha p)^{q}}\Big]\|\cdot\|_{\mathcal{C}}^{q}.
\]
In particular for $\alpha=\frac{1}{p}$, it means that
the Fenchel conjugate of $\frac{1}{p}\|\cdot\|^p$ is $\frac{1}{q}\|\cdot\|^q_\star$.
\end{lemma}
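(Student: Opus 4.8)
The plan is to deduce everything from one elementary identity — for an arbitrary norm $N$ on $\mathbb{R}^m$ with dual norm $N_\star$, $\big(\tfrac1p N^p\big)^*=\tfrac1q N_\star^q$ — and then recover the general $\alpha$ and the precise closed form by homogeneity plus a short constant chase. First I would record that $\sigma_{\mathcal C}=\|\cdot\|_{\mathcal C^\circ}$ is itself a norm whose dual norm is $\|\cdot\|_{\mathcal C}$: this follows from the polarity facts in Section~\ref{sec:preliminaries} ($\|\cdot\|_{\mathcal C}^\star=\|\cdot\|_{\mathcal C^\circ}$, hence $\|\cdot\|_{\mathcal C^\circ}^\star=\|\cdot\|_{\mathcal C}$ by biduality of norms in finite dimension). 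So it suffices to prove the claim for an abstract norm $N$ and then substitute $N=\sigma_{\mathcal C}$, $N_\star=\|\cdot\|_{\mathcal C}$.

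For the normalized identity I would argue by Fenchel--Young. From the scalar Young inequality and the definition of the dual norm, $\langle x,y\rangle\le N(x)N_\star(y)\le \tfrac1p N(x)^p+\tfrac1q N_\star(y)^q$ for all $x,y$; taking the supremum in $x$ gives $\big(\tfrac1p N^p\big)^*\le \tfrac1q N_\star^q$. For the reverse inequality I would exhibit a maximizer: choose $u$ with $N(u)=1$ and $\langle u,y\rangle=N_\star(y)$, and set $x=N_\star(y)^{q-1}u$; then both inequalities above are tight (the Cauchy/H\"older step by the choice of $u$, the Young step because $N(x)^{p-1}=N_\star(y)$), so $\big(\tfrac1p N^p\big)^*=\tfrac1q N_\star^q$, which is the ``in particular'' statement. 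One could instead compute $\big(\alpha N^p\big)^*(y)=\sup_{N(u)=1}\sup_{t\ge 0}\big(t\langle u,y\rangle-\alpha t^p\big)$ directly, solving the inner one-dimensional problem ($t_\star=(\langle u,y\rangle/(\alpha p))^{1/(p-1)}$ if $\langle u,y\rangle>0$, else $t_\star=0$) and then using $\sup_{N(u)=1}\langle u,y\rangle=N_\star(y)\ge 0$; or simply cite \citep[Lemma 2]{asplund1968frechet}.

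For general $\alpha>0$ I would use the homogeneity rule $(\lambda h)^*(y)=\lambda h^*(y/\lambda)$ with $h=\tfrac1p N^p$ and $\lambda=\alpha p$, so that $\alpha N^p=\lambda h$ and $\big(\alpha N^p\big)^*(y)=\alpha p\cdot\tfrac1q N_\star\!\big(y/(\alpha p)\big)^q=\tfrac{1}{q(\alpha p)^{q-1}}N_\star(y)^q$. It then remains to verify that $\tfrac{1}{q(\alpha p)^{q-1}}$ coincides with the coefficient $\tfrac{1}{(\alpha p)^{1/(p-1)}}-\tfrac{\alpha}{(\alpha p)^{q}}$ appearing in the statement; this is pure bookkeeping using $q-1=\tfrac{1}{p-1}$ and $1-\tfrac1p=\tfrac1q$.

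There is no real obstacle here. The only points requiring a bit of care are the positive-part subtlety in the direct computation of the inner supremum (when $\langle u,y\rangle\le 0$ the maximizing $t$ is $0$), and making sure the exponents $1/(p-1)$, $q-1$, $q$ and the coefficient $1-1/p=1/q$ are combined correctly so as to land on the exact closed form stated.
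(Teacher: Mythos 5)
Your argument is correct. Your primary route differs from the paper's in how it organizes the computation: the paper first establishes the compositional identity $\rho^*(\|y\|_\star)=(\rho\circ\|\cdot\|)^*(y)$ via a chain of substitutions, and then computes the one-dimensional conjugate of $t\mapsto\alpha t^p$ directly, obtaining the closed-form coefficient for all $\alpha$ at once. You instead prove the normalized case $\big(\tfrac1p N^p\big)^*=\tfrac1q N_\star^q$ by Fenchel--Young (with an explicit maximizer $x=N_\star(y)^{q-1}u$ for the lower bound) and then pass to general $\alpha$ via the homogeneity rule $(\lambda h)^*(y)=\lambda h^*(y/\lambda)$ with $\lambda=\alpha p$; the resulting coefficient $\tfrac{1}{q(\alpha p)^{q-1}}$ matches the paper's $\tfrac{1}{(\alpha p)^{1/(p-1)}}-\tfrac{\alpha}{(\alpha p)^{q}}$ after using $q-1=1/(p-1)$ and $1-1/p=1/q$, as you note. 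Your decomposition has the pedagogical merit of making the ``in particular'' case transparent via the equality case of Young's inequality, at the small cost of an extra scaling step; the paper's single direct computation is more uniform but less illuminating about where the normalization $\alpha=1/p$ comes from. You even mention the paper's direct supremum computation (and the Asplund--Rockafellar citation) as an alternative, so the two approaches are fully reconciled.
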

\begin{proof}[Proof of Lemma \ref{lem:power_norm_fenchel}]
We recall the proof for completeness. Consider $\rho^*(u)\triangleq \text{sup}_{t>0}\big(tu - \rho(t)\big)$. For any $y$, we have
\begin{eqnarray*}
\rho^*(\|y\|_\star) & = & \text{sup}_{t>0} \big\{ t\|y\|_{\star} - \rho(t) \big\} =  \text{sup}_{t>0} \text{sup}_{x \neq 0} \Big[ t \langle y; x\rangle/ \|x\| - \rho(t) \Big]\\
& = & \text{sup}_{t>0; x\neq 0} \Big[ t\frac{\langle y; x t/\|x\|\rangle}{\|x t/\|x\|\|} - \rho(t) \Big] = \text{sup}_{t>0; x\neq 0} \Big\{ t\frac{\langle y; x\rangle}{\|x\|} - \rho(t) ; \|x\|=t\Big\}\\
& = & \text{sup}_{x\neq 0} \big\{ \langle y; x\rangle - \rho(\|x\|)\big\} = (\rho\circ\|\cdot\|)^*(y).
\end{eqnarray*}
Also, an immediate calculation proves that for $u\geq 0$ and when $\rho(t) = \alpha t^r$ with $r>1$, we have $\rho^*(u) = \Big[\frac{1}{(\alpha r)^{1/(r-1)}} - \frac{\alpha}{(\alpha r)^{r/(r-1)}}\Big] u^{r/(r-1)}$. 
We finally conclude noting that $\sigma_{\mathcal{C}^\circ}(\cdot)=\|\cdot\|_{\mathcal{C}}$ and $\|\cdot\|_{\mathcal{C}^\circ}=\|\cdot\|_\star$.
\end{proof}

\section{Spaces, Sets, Functions Uniform Smoothness and Convexity}\label{sec:3_point_of_view}

In this section, we introduce the necessary concepts to state the main theorems in Sections \ref{sec:global_equivalence}-\ref{sec:local_equivalence}.
We recall the classical notions of uniform convexity and smoothness for functions (Section \ref{ssec:functional_UC_UC}) and Banach spaces (Section \ref{ssec:set_space_UC_UC}).
We also recall quantitative statements on the duality correspondence between smoothness and uniform convexity in each of these situations.

\subsection{Uniform Convexity and Smoothness of Functions}
\label{ssec:functional_UC_UC}
Uniform convexity and smoothness of functions were introduced to analyse optimization algorithms \citep{polyak1966existence} and extensively studied in
\citep{zalinescu1983uniformly,aze1995uniformly,zalinescu2002convex}, and is now a standard assumption in the analysis of first order methods, see, \textit{e.g.}, \citep{iouditski2014primal}.

The following equivalent definitions of uniformly smooth function are classical, see, \textit{e.g.}, \citep[(i)-(iv)-(ix) of Theorem 3.5.6.]{zalinescu2002convex}, which notably shows that a continuous uniformly smooth function is Fréchet differentiable. This means that a norm for instance is not uniformly smooth as it is not differentiable at $0$, see Lemma \ref{lem:differentiability_support}.
This explains why hypothesis \ref{itm:modulus_UC} in Theorem \ref{th:UC_set_and_support} below is restricted to $S_{\|\cdot\|}(1)$. In the following sections, we consider only uniform convexity and smoothness of functions to ultimately apply it to simple transformations of the gauge and support functions. 
We recall self-contained proofs of the equivalences in the definition to obtain quantitative statements. Note that whenever we invoke uniformly smooth or convex functions in the other sections, we will often refer to these \emph{zero-order characterization}.

\begin{definition}[Uniformly Smooth Functions]\label{def:uniformly_smooth_functions}
Consider a convex function $f:\mathbb{R}^m\rightarrow\mathbb{R}$ and $q\in]1,2]$.
The following assertions are equivalent
\begin{enumerate}[label=(\alph*)]
    \item (Zero-order) There exists $c>0$ s.t. $f$ is $(c, q)$-uniformly smooth with respect to $\|\cdot\|$, \textit{i.e.}, for any $(x, y)$ and $\lambda\in[0,1]$
    \[
    f(\lambda x + (1-\lambda)y) + (c/q) \lambda (1-\lambda) \|x-y\|^q \geq \lambda f(x) + (1-\lambda) f(y).
    \]
    \label{itm:fct_zero_order_smoothness}

    \item (First-order) $f$ is differentiable and there exists $c^\prime>0$ such that for any $(x,y)$, we have
    \[
    f(y) \leq f(x) + \langle \nabla f(x); y - x \rangle + \frac{c^\prime}{q} \|x-y\|^q.
    \]
    \label{itm:fct_first_order_smoothness}

    \item (H\"older gradient) $f$ is differentiable and there exists $c^{\prime\prime}>0$ such that $f$ is $(c^{\prime\prime}, q)$-H\"older-smooth w.r.t. $\|\cdot\|$, \textit{i.e.}, for any $(x,y)$
    \[
    \big\| \nabla f(x) - \nabla f(y) \big\|_\star \leq c^{\prime\prime} \| x - y\|^{q-1}.
    \]
    \label{itm:fct_holder_gradient_smoothness}
\end{enumerate}
\end{definition}
\begin{proof}[Proof of equivalency in Definition \ref{def:uniformly_smooth_functions}]
We adapt the proof of \citep[Theorem 3.5.6]{zalinescu2002convex} to our case.

\ref{itm:fct_zero_order_smoothness} $\implies$ \ref{itm:fct_first_order_smoothness}.
Let $(x,y)\in\mathbb{R}^m$ and $\lambda\in ]0,1]$. The zero-order condition evaluated at $(x,y)$ implies that
\begin{equation}\label{eq:zero_oder_smooth_divided_lambda}
\frac{f(y + \lambda (x-y)) - f(y)}{\lambda} + (c/q)(1 - \lambda) \|x - y\|^p \geq f(x) - f(y).
\end{equation}
And because $f$ is a finite convex function, the limit of $\big(f(y + \lambda (x-y)) - f(y)\big)/\lambda$ when $\lambda$ converges to $0^+$ exists \citep[Theorem 23.1.]{rockafellar1970convex} and $f^\prime(x,\cdot)$ is defined for $d\in\mathbb{R}^m$ as
\[
f^\prime(x, d) \triangleq \underset{\lambda\rightarrow 0^+}{\text{lim }} \frac{f(y+\lambda d) - f(y)}{\lambda}.
\]
In particular, with $d=x-y$, it implies in \eqref{eq:zero_oder_smooth_divided_lambda} that
\begin{equation}\label{eq:first_order_not_yet}
f^\prime(y, x - y) + (c/q) \|x - y\|^q \geq f(x) - f(y).
\end{equation}
Let us now show that $f^\prime(x, \cdot)$ is linear.
By definition of $f^\prime(x, \cdot)$, we have that $f^\prime(x, y) \geq -f^\prime(x,-y)$. 
Let us now show that the other side inequality is also true.
Summing the two versions of \eqref{eq:first_order_not_yet} by interchanging $x$ and $y$, we obtain
\begin{equation*}
f^\prime(y, x-y) + f^\prime(x, y-x) + (2c/q) \|x - y\|^q \geq 0.
\end{equation*}
Let $u\in\mathbb{R}^m$, $t>0$ and write $\rho(t) = f(x + tu)$. Then
\begin{equation*}
  \left\{
    \begin{split}
    \rho_+^\prime(t) &\triangleq \underset{\lambda\rightarrow t^+}{\text{lim }}\frac{\rho(\lambda) - \rho(t)}{\lambda - t} = f^\prime(x + t u,u)\\
    \rho_{-}^\prime(t) &\triangleq \underset{\lambda\rightarrow t^-}{\text{lim }}\frac{\rho(\lambda) - \rho(t)}{\lambda - t} = -f^\prime(x + tu,-u).
    \end{split}
  \right.
\end{equation*}
Then, because $\rho$ is convex, we have $ \rho^\prime_{+}(-t)\leq \rho^\prime_-(0)\leq \rho^\prime_+(0) \leq \rho^\prime_-(t)$.
Hence, for any $t>0$
\[
f^\prime(x,u) + f^\prime(x, -u) = \rho^\prime_+(0) - \rho^\prime_-(0) \leq \rho^\prime_{-}(t) - \rho^\prime_{+}(-t) = - \big[ f^\prime(x+tu,-u) + f^\prime(x-tu, u)\big] \leq \frac{2c}{q}(2t)^q\|u\|^q.
\]
We conclude that $f^\prime(x,u)\leq -f^\prime(x,-u)$ and finally that $f^\prime(x,u) = -f^\prime(x,-u)$.
Hence $f^\prime(x,\cdot)$ is a bounded linear function for any $x$ so that $f$ is differentiable with $f^\prime(x,h)=\langle \nabla f(x); h\rangle$. We conclude by letting $\lambda$ converging to $1$ in  \eqref{eq:zero_oder_smooth_divided_lambda}.

\ref{itm:fct_first_order_smoothness} $\implies$ \ref{itm:fct_zero_order_smoothness}.
Write $x_{\lambda}=\lambda x + (1 - \lambda) y$. Applying the first order at $x = x_{\lambda} + x - x_{\lambda}$ and $y = x_{\lambda} + y - x_{\lambda}$, we obtain
\begin{eqnarray*}
f(x) & \leq & f(x_\lambda) + (1 - \lambda) \langle \nabla f(x_\lambda); x - y\rangle + (c/q)(1-\lambda)^q \|x - y\|^q\\
f(y) & \leq & f(x_\lambda) + \lambda \langle \nabla f(x_\lambda); y - x \rangle + (c/q)\lambda^q \|x - y\|^q.
\end{eqnarray*}
Then, by multiplying the inequalities respectively with $\lambda$ and $1-\lambda$ and summing then, we obtain
\[
\lambda f(x) + (1 - \lambda) f(y) \leq f(x_\lambda) + (c/q)\lambda(1-\lambda)\big[ (1-\lambda)^{q-1} + \lambda^{q-1} \big] \|x - y\|^q.
\]
Then, by symmetry of $(1-\lambda)^{q-1} + \lambda^{q-1}$ and because $q-1\in]0,1]$, we obtain
\[
\lambda f(x) + (1 - \lambda) f(y) \leq f(x_\lambda) + 2(c/q)\lambda(1-\lambda)\|x - y\|^q.
\]

\ref{itm:fct_first_order_smoothness} $\implies$ \ref{itm:fct_holder_gradient_smoothness}.
For any $z$, by convexity of $f$, we have $f(y+z) \geq f(x) + \langle \nabla f(x); y+z-x \rangle$ and by assumption we have $f(y+z) \leq f(y) + \langle \nabla f(y) ;  z \rangle + \frac{c^{\prime}}{q}\|z\|^q$. Hence
\[
 f(y) + \langle \nabla f(y) ;  z \rangle + \frac{c^{\prime}}{q}\|z\|^q \geq f(x) + \langle \nabla f(x); y+z-x \rangle
\]
so that for any $z$
\[
\langle z; \nabla f(x) - \nabla f(y) \rangle - \frac{c^\prime}{q} \|z\|^q \leq f(y) - f(x) + \langle \nabla f(x); x-y\rangle \leq \frac{c^\prime}{q} \|x-y\|^q.
\]
Then, by taking the supremum over $z$ on both sides, we obtain
\[
c^\prime \Big(\frac{1}{q}\|\cdot\|^q\Big)^*\big((\nabla f(x)-\nabla f(y))/c^\prime\big) \leq  \frac{c^\prime}{q} \|x-y\|^q.
\]
With $p\geq 2$ s.t. $\frac{1}{p}+\frac{1}{q}=1$, Lemma \ref{lem:power_norm_fenchel} then implies 
\[
\frac{c^\prime}{p}\Big\|\frac{\nabla f(x) - \nabla f(y)}{c^\prime}\Big\|_\star^p \leq \frac{c^\prime}{q} \|x-y\|^q.
\]
In particular, $\frac{p}{q}=\frac{1}{q-1}$ and we obtain
\[
\|\nabla f(x) - \nabla f(y) \|_\star \leq \frac{c^{\prime\prime}}{(q-1)^{1/p}} \|x - y\|^{q-1}.
\]

\ref{itm:fct_holder_gradient_smoothness} $\implies$ \ref{itm:fct_first_order_smoothness}. By convexity of $f$, we have $f(x) \geq f(y) + \langle \nabla f(y); x-y\rangle$. Hence by definition of the dual norm, we obtain
\[
f(y) - f(x) - \langle \nabla f(x); y-x\rangle \leq \langle \nabla f(y) - \nabla f(x); y - x\rangle \leq \|\nabla f(y) - \nabla f(x)\|_\star \|y-x\|.
\]
and using the H\"older-smoothness of $f$ we obtain
\begin{equation*}\tag*{\qedhere} 
f(y) - f(x) - \langle \nabla f(x); y-x\rangle \leq  c^\prime\|y-x\|^q.
\end{equation*}
\end{proof}

We now define uniform convexity of a function, see, \textit{e.g.}, \citep[Definition 1]{aze1995uniformly}. We state the results in terms of subgradients as gauge or support functions are not necessarily differentiable.

\begin{definition}[Uniformly Convex Functions]\label{def:uniformly_convex_functions}
Consider a convex function $f:\mathbb{R}^m\rightarrow\mathbb{R}$ and $p\geq 2$.
The following assertions are equivalent
\begin{enumerate}[label=(\alph*)]
    \item (Zero-order) There exists $c>0$ s.t. $f$ is $(c, p)$-uniformly convex with respect to $\|\cdot\|$, \textit{i.e.}, for any $(x, y)$ and $\lambda\in[0,1]$, we have 
    \begin{equation*}
    f(\lambda x + (1-\lambda)y) + (c/p) \lambda (1-\lambda) \|x-y\|^p \leq \lambda f(x) + (1-\lambda) f(y).
    \end{equation*}
    \label{itm:fct_zero_order_convexity}

    \item (First-order) There exists $\alpha>0$ s.t. for any $(x,y)\in\mathcal{C}$ and $d\in\partial f(x)$, we have
    \begin{equation*}
    f(y) \geq f(x) + \langle d; y - x \rangle + \frac{\alpha}{p} \|x-y\|^p.
    \end{equation*}
    \label{itm:fct_first_order_convexity}
\end{enumerate}
\end{definition}

\begin{proof}[Proof of equivalency in Definition \ref{def:uniformly_convex_functions}]
\ref{itm:fct_zero_order_convexity} $\implies$ \ref{itm:fct_first_order_convexity}. 
Let $(x,y)\in\mathbb{R}^m$ and $d\in\partial f(y)$. Combining convexity of $f$ and zero-order uniform convexity, we have
\[
f(y) + \lambda\langle d; x-y \rangle \leq f(y + \lambda(x-y)) \leq  f(y) + \lambda (f(x) - f(y)) - (c/p) \lambda (1-\lambda)\|x-y\|^p.
\]
Then, dividing by $\lambda$ and evaluating with $\lambda$ converging to zero, we have
\[
\langle d; x-y \rangle \leq f(x) - f(y) - (c/p) \|x-y\|^p.
\]

\ref{itm:fct_first_order_convexity} $\implies$ \ref{itm:fct_zero_order_convexity}. Write $x_\lambda = \lambda x + (1 - \lambda)y$. We apply the first-order condition at $x = x_{\lambda} + x - x_{\lambda}$ and $y = x_{\lambda} + y - x_{\lambda}$. With $d\in\partial f(x_\lambda)$, we have
\begin{eqnarray*}
f(x) & \geq & f(x_\lambda) + (1- \lambda)\langle d; x-y\rangle + \frac{\alpha}{p} (1-\lambda)^p\|y-x\|^p\\
f(y) & \geq & f(x_\lambda) + \lambda\langle d; y - x\rangle + \frac{\alpha}{p} \lambda^p\|y-x\|^p.
\end{eqnarray*}
Multiplying the inequalities respectively by $\lambda$ and $1-\lambda$ and summing them, we obtain
\[
\lambda f(x) + (1-\lambda) f(y)\geq f(x_\lambda) + \frac{\alpha}{p}\lambda (1-\lambda) \big[(1-\lambda)^{p-1} + \lambda^p \big] \|y-x\|^p.
\]
Then, by symmetry, we have that $\text{min}_{\lambda\in[0,1]}\big[(1-\lambda)^{p-1} + \lambda^p \big]= 1/2^{p-2}$, which concludes that
\begin{equation*}\tag*{\qedhere}
\lambda f(x) + (1-\lambda) f(y)\geq f(x_\lambda) + \frac{\alpha}{2^{p-2}p}\lambda (1-\lambda)\|y-x\|^p.
\end{equation*}
\end{proof}

Uniform smoothness (US) and uniform convexity (UC) are dual properties by Fenchel conjugacy \citep[Theorem 2.1.]{zalinescu1983uniformly} or \citep[Proposition 2.6]{aze1995uniformly}.
We recall a proof below, both for completeness and to obtain quantitative statements.

\begin{proposition}[Uniform Smoothness and Convexity with Fenchel duality]\label{prop:Duality_Fenchel_UC_US}
Consider $\alpha,c>0$, $p\geq 2$ and $q\in]1,2]$ such that $\frac{1}{p}+\frac{1}{q}=1$, and a norm $\|\cdot\|$ with its dual norm $\|\cdot\|_\star$.
Let $f:\mathbb{R}^m\rightarrow\mathbb{R}$ be a convex function.
We have the following implications
\begin{enumerate}[label=(\alph*)]
    \item If $f$ is $(\alpha, q)$-uniformly smooth w.r.t. $\|\cdot\|$ (Definition \ref{def:uniformly_smooth_functions} \ref{itm:fct_zero_order_smoothness}), then $f^*$ is $(1/(p \alpha^{p-1}), p)$-uniformly convex w.r.t. $\|\cdot\|_\star$ (Definition \ref{def:uniformly_convex_functions} \ref{itm:fct_zero_order_convexity}). \label{itm:US_Fenchel_UC}
    
    \item If $f$ is $(c, p)$-uniformly convex w.r.t. $\|\cdot\|$, then $f^*$ is $\big(1/(q c^{q-1}), q\big)$-uniformly smooth with respect to $\|\cdot\|_\star$. \label{itm:UC_Fenchel_US}
\end{enumerate}
\end{proposition}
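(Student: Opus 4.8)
I would prove both implications with a single device. To control the Jensen gap of $f^{*}$ at a pair $(u,v)$ with weight $\lambda\in[0,1]$, I evaluate the defining supremum $f^{*}(\zeta)=\sup_{z}\,\langle\zeta,z\rangle-f(z)$ at \emph{symmetrically perturbed} arguments, $z+(1-\lambda)w$ for $u$ and $z-\lambda w$ for $v$: these two points have convex combination $z$ (with the same weights $\lambda,1-\lambda$) and difference $w$, so the zero-order smoothness/convexity inequality for $f$ applies to them verbatim and the bilinear cross terms collapse to $\lambda(1-\lambda)\langle u-v,w\rangle$. Optimizing the leftover over $w$ produces the Fenchel conjugate of $\tfrac{\alpha}{q}\|\cdot\|^{q}$ (resp.\ of $\tfrac{c}{p}\|\cdot\|^{p}$), which Lemma~\ref{lem:power_norm_fenchel}, fed with $\rho(t)=\tfrac{\alpha}{q}t^{q}$ (resp.\ $\rho(t)=\tfrac{c}{p}t^{p}$), evaluates to $\tfrac{1}{p\alpha^{p-1}}\|\cdot\|_{\star}^{p}$ (resp.\ $\tfrac{1}{qc^{q-1}}\|\cdot\|_{\star}^{q}$), i.e.\ the moduli in the statement.

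For \ref{itm:US_Fenchel_UC}: for arbitrary $z,w$ use $\langle u,z+(1-\lambda)w\rangle-f(z+(1-\lambda)w)\le f^{*}(u)$ and the analogous bound for $v$, combine them with weights $\lambda,1-\lambda$, and insert the zero-order $(\alpha,q)$-smoothness of $f$ written as $\lambda f(z+(1-\lambda)w)+(1-\lambda)f(z-\lambda w)\le f(z)+\tfrac{\alpha}{q}\lambda(1-\lambda)\|w\|^{q}$; taking $\sup_{z}$ (which reconstitutes $f^{*}(\lambda u+(1-\lambda)v)$) and then $\sup_{w}$ yields the zero-order uniform convexity of $f^{*}$. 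Item \ref{itm:UC_Fenchel_US} is the mirror image: $(c,p)$-uniform convexity forces $f$ to be coercive, hence $f^{*}$ finite with the suprema attained, at $z_{u},z_{v}$ say; bounding $f^{*}(\lambda u+(1-\lambda)v)$ below by evaluating its defining supremum at $\lambda z_{u}+(1-\lambda)z_{v}$ and invoking $(c,p)$-convexity of $f$ reduces the gap of $f^{*}$ to $\lambda(1-\lambda)\bigl(\langle u-v,z_{u}-z_{v}\rangle-\tfrac{c}{p}\|z_{u}-z_{v}\|^{p}\bigr)$, which is at most $\lambda(1-\lambda)\tfrac{1}{qc^{q-1}}\|u-v\|_{\star}^{q}$ by Lemma~\ref{lem:power_norm_fenchel}. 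Equivalently, one can run both parts through the first-order characterizations of Definitions~\ref{def:uniformly_smooth_functions}--\ref{def:uniformly_convex_functions}, replacing ``maximizer of the conjugate'' by ``subgradient of the conjugate'' and using the Fenchel--Young equality $f^{*}(u)=\langle u,x\rangle-f(x)$ whenever $u\in\partial f(x)$.

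I do not expect a serious obstacle: the whole argument is routine once the perturbation is chosen. The one genuine choice is the \emph{symmetric} splitting $z\mapsto\bigl(z+(1-\lambda)w,\;z-\lambda w\bigr)$, which is exactly what makes $f$'s Jensen inequality applicable with the original weights $\lambda,1-\lambda$ while leaving a clean bilinear residue. The rest is arithmetic bookkeeping — in particular one must conjugate $\tfrac{\alpha}{q}\|\cdot\|^{q}$ (not $\tfrac1q\|\cdot\|^{q}$) so that the scalar $\alpha$, resp.\ $c$, propagates into the $\alpha^{-(p-1)}$, resp.\ $c^{-(q-1)}$, factor. For \ref{itm:UC_Fenchel_US} the only soft point is attainment of the defining suprema, which coercivity of $f$ provides; if one prefers, $\varepsilon$-maximizers with $\varepsilon\downarrow0$ work just as well.
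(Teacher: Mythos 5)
Your proof is correct and is essentially the paper's own argument in a change of variables: where you perturb symmetrically via $z\mapsto(z+(1-\lambda)w,\,z-\lambda w)$, the paper works with arbitrary $(x_1,x_2)$ and invokes the bilinear identity $\langle y_\lambda;x_\lambda\rangle+\lambda(1-\lambda)\langle y_1-y_2;x_1-x_2\rangle=\lambda\langle y_1;x_1\rangle+(1-\lambda)\langle y_2;x_2\rangle$, which is precisely the same decomposition with $z=x_\lambda$ and $w=x_1-x_2$. Both routes feed the residual into Lemma~\ref{lem:power_norm_fenchel} to conjugate $\tfrac{\alpha}{q}\|\cdot\|^q$ (resp.\ $\tfrac{c}{p}\|\cdot\|^p$) and obtain identical constants.
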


\begin{proof}[Proof of Proposition \ref{prop:Duality_Fenchel_UC_US}]
Let us prove \ref{itm:UC_Fenchel_US}, \ref{itm:US_Fenchel_UC} follows similarly.
Assume $f$ is $(c,p)$-uniformly convex. 
Consider $(y_1,y_2)\in \mathbb{R}^m$, $\lambda\in[0,1]$ and write $y_\lambda=\lambda y_1 + (1-\lambda) y_2$. 
Similarly, for any $(x_1,x_2)\in\mathbb{R}^m$, let us write $x_\lambda=\lambda x_1 + (1-\lambda)x_2$, $f(x_i)=f_i$ for $i=1,2$, $f(x_\lambda)=f_\lambda$, $f(x_\lambda)^*=f_\lambda^*$ etc.
By definition of conjugate functions, and using the zero-order uniform convexity of $f(\cdot)$ at $x_{\lambda}$, we have
\[
\langle y_{\lambda}; x_{\lambda}\rangle \leq f^*(y_{\lambda}) + f_\lambda\leq  f^*(y_{\lambda}) - (c/p) \lambda(1-\lambda)\|x_1 - x_2\|^p + \lambda f_1 + (1-\lambda)f_2.
\]
By adding and subtracting $\lambda(1-\lambda)\langle y_1 - y_2; x_1 - x_2\rangle$, we obtain
\[
\langle y_{\lambda}; x_{\lambda}\rangle \leq f^*(y_{\lambda}) + \lambda f_1 + (1-\lambda)f_2 - \lambda (1-\lambda) \langle y_1 - y_2; x_1 - x_2\rangle + \lambda(1-\lambda) \big[ \langle y_1 - y_2; x_1 - x_2\rangle  - (c/p) \|x_1 - x_2\|^p\big]. 
\]
The right term in brackets is upper bounded by $((c/p) \|\cdot\|^p)^*(y_1-y_2)$, so that
\[
\langle y_{\lambda}; x_{\lambda}\rangle  - \lambda f_1 - (1-\lambda)f_2 + \lambda (1-\lambda) \langle y_1 - y_2; x_1 - x_2\rangle \leq f^*(y_{\lambda}) + \lambda(1-\lambda) ((c/p) \|\cdot\|^p)^*(y_1-y_2).
\]
Note also the following equality
\[
\langle y_\lambda; x_\lambda\rangle + \lambda (1-\lambda) \langle y_1 - y_2; x_1 - x_2\rangle = \lambda \langle y_1;x_1\rangle + (1-\lambda)\langle y_2; x_2\rangle.
\]
Hence, we obtain
\begin{eqnarray*}
\lambda \langle y_1;x_1\rangle + (1-\lambda)\langle y_2; x_2\rangle  - \lambda f_1 - (1-\lambda)f_2 &\leq& f^*(y_{\lambda}) + \lambda(1-\lambda) ((c/p) \|\cdot\|^p)^*(y_1-y_2)\\
\lambda \big[\langle y_1;x_1\rangle - f_2\big]+ (1-\lambda)\big[\langle y_2; x_2\rangle  - f_2\big] &\leq& f^*(y_{\lambda}) + \lambda(1-\lambda) ((c/p) \|\cdot\|^p)^*(y_1-y_2).
\end{eqnarray*}
Because the last inequality is true for any $(x_1,x_2)$, we conclude that
\[
\lambda f^*(y_1) + (1-\lambda)f^*(y_2) \leq f^*(y_\lambda) + \lambda(1-\lambda) ((c/p) \|\cdot\|^p)^*(y_1-y_2).
\]
Lemma \ref{lem:power_norm_fenchel} implies that $((c/p) \|\cdot\|^p)^*(y_1-y_2) = \frac{1}{q c^{q-1}}\|y_1-y_2\|_\star^q$. Finally $f^*$ is $\big(1/(q c^{q-1}), q\big)$-uniformly smooth with respect to $\|\cdot\|_\star$.
\end{proof}

In the following proposition, we provide similar results for local notions of uniform convexity and smoothness of a function.
These are quantitative versions of \citep[Proposition 3.2.]{aze1995uniformly} or \citep[(iv) \& (v) Theorem 2.1.]{zalinescu1983uniformly}. 

\begin{proposition}\label{prop:local_directional_duality}
Consider $\alpha,c>0$, $p\geq 2$ and $q\in]1,2]$ such that $\frac{1}{p}+\frac{1}{q}=1$.
Let $f:\mathbb{R}^m\rightarrow\mathbb{R}$ a convex function and $(x,d)$ such that $d\in\partial f(x)$ and $x\in\partial f^*(d)$.
The following assertions are equivalent
\begin{enumerate}[label=(\alph*)]
    \item For some $\alpha>0$, $f^*$ is $(\alpha, q)$-uniformly-smooth at $d$ w.r.t. to $\|\cdot\|_\star$, \textit{i.e.}, for all $d_1$, we have
    \[
    f^*(d_1) \leq f^*(d) + \langle x; d_1-d\rangle + \frac{\alpha}{q}\|d_1-d\|_\star^q.
    \]
    \label{itm:local_directional_fct_smoothness}
    
    \item For some $c>0$, $f$ is $(c, p)$-uniformly convex at $x$ w.r.t $\|\cdot\|$, \text{i.e.}, for any $y$, we have
    \[
    f(y) \geq f(x) + \langle d; y-x\rangle + \frac{c}{p} \|y-x\|^{p}.
    \]
    \label{itm:local_directional_fct_convexity}

\end{enumerate}
\end{proposition}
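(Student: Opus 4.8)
The plan is to mimic, at the level of a single anchor point $(x,d)$, the Fenchel-conjugacy argument behind Proposition~\ref{prop:Duality_Fenchel_UC_US}. The two ingredients I will use are the Fenchel--Young equality $f(x)+f^*(d)=\langle x;d\rangle$, which holds precisely because $d\in\partial f(x)$ (equivalently $x\in\partial f^*(d)$), and Lemma~\ref{lem:power_norm_fenchel}, which dualizes the power-of-norm penalties $\tfrac{c}{p}\|\cdot\|^{p}$ and $\tfrac{\alpha}{q}\|\cdot\|_\star^{q}$.

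\textbf{\ref{itm:local_directional_fct_convexity} $\Rightarrow$ \ref{itm:local_directional_fct_smoothness}.} I would start from $f^*(d_1)=\sup_{y}\big(\langle y;d_1\rangle-f(y)\big)$ and substitute the anchored lower bound $f(y)\ge f(x)+\langle d;y-x\rangle+\tfrac{c}{p}\|y-x\|^{p}$. Writing $z=y-x$, the supremum decouples into the linear term $\langle x;d_1\rangle-f(x)$ plus $\big(\tfrac{c}{p}\|\cdot\|^{p}\big)^*(d_1-d)$, which Lemma~\ref{lem:power_norm_fenchel} evaluates to $\tfrac{1}{q c^{q-1}}\|d_1-d\|_\star^{q}$. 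Replacing $-f(x)$ by $f^*(d)-\langle x;d\rangle$ via Fenchel--Young turns the resulting estimate into exactly \ref{itm:local_directional_fct_smoothness} with $\alpha=1/(q c^{q-1})$.

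\textbf{\ref{itm:local_directional_fct_smoothness} $\Rightarrow$ \ref{itm:local_directional_fct_convexity}.} This is the symmetric computation, now using $f=f^{**}$. For every $d_1$ one has $f(y)\ge\langle d_1;y\rangle-f^*(d_1)$; inserting the anchored upper bound on $f^*(d_1)$ from \ref{itm:local_directional_fct_smoothness} and optimizing over $w=d_1-d$ produces $\langle d;y\rangle-f^*(d)+\big(\tfrac{\alpha}{q}\|\cdot\|_\star^{q}\big)^*(y-x)$. Since $\|\cdot\|_{\star\star}=\|\cdot\|$ in finite dimension, Lemma~\ref{lem:power_norm_fenchel} gives $\big(\tfrac{\alpha}{q}\|\cdot\|_\star^{q}\big)^*(y-x)=\tfrac{1}{p\alpha^{p-1}}\|y-x\|^{p}$, and $-f^*(d)=f(x)-\langle x;d\rangle$ yields \ref{itm:local_directional_fct_convexity} with $c=1/(p\alpha^{p-1})$.

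The calculations are routine; the one genuine point to watch is that everything hinges on the hypothesis $d\in\partial f(x)$, $x\in\partial f^*(d)$, which is what makes Fenchel--Young an \emph{equality} rather than merely an inequality and thereby lets the ``constant'' term $f^*(d)$ (resp. $f(x)$) reassemble into the first-order model at the anchor. The use of $f^{**}=f$ is legitimate since $f$ is convex, finite and continuous throughout the paper. No attainment of the suprema is needed, as we only push the anchored inequalities through the conjugation — which is exactly why the argument localizes so cleanly, in contrast with the two-variable sweep required in Proposition~\ref{prop:Duality_Fenchel_UC_US}.
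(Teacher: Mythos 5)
Your proposal is correct and takes essentially the same approach as the paper: both directions rest on the Fenchel--Young equality $f(x)+f^*(d)=\langle x;d\rangle$ at the anchor point $(x,d)$, together with Lemma~\ref{lem:power_norm_fenchel} to dualize the power-of-norm penalty (the paper writes out the direction \ref{itm:local_directional_fct_smoothness}~$\Rightarrow$~\ref{itm:local_directional_fct_convexity} explicitly via the auxiliary function $\Phi=\tfrac{\alpha}{q}\sigma_{\mathcal{C}}^q$ and states the converse follows similarly, which is what you sketch). Your stated constants are off by harmless factors of $p$ or $q$ relative to the $\tfrac{\alpha}{q}$/$\tfrac{c}{p}$ parametrization in the statement, but the same minor bookkeeping ambiguity appears in the paper's Proposition~\ref{prop:Duality_Fenchel_UC_US}, so this does not reflect a gap in the argument.
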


\begin{proof}[Proof of Proposition \ref{prop:local_directional_duality}]
First note, that since $f$ is finite l.s.c., for $d\in\partial f(x)$, we have $x\in\partial f^*(d)$ \citep[Theorem 23.5.]{rockafellar1970convex}.
Let us show that \ref{itm:local_directional_fct_smoothness} $\implies$ \ref{itm:local_directional_fct_convexity}, the converse follows similarly. 
Recall that $f(y)=\text{sup}_{d_1\in\mathbb{R}^m}\big\{\langle y; d_1\rangle - f^*(d_1)\big\}$. 
Write $\Phi(d_1)\triangleq \alpha/q \|d_1-d\|_\star^q$. 
Combining the uniform smoothness assumption on $f^*$, adding and subtracting $\langle y; d\rangle$ and with the equality $f^*(d) + f(x) = \langle d ; x \rangle$ \citep[Theorem 23.5.]{rockafellar1970convex}, we have for any $y$
\begin{eqnarray*}
f(y) & \geq & \text{sup}_{d_1\in \mathbb{R}^m} \big\{ \langle y; d_1\rangle - \big(f^*(d) + \langle x; d_1 - d\rangle + \Phi(d_1 - d) \big)\big\}\\
f(y) & \geq & \text{sup}_{d_1\in \mathbb{R}^m} \big\{ \langle y; d_1 -d\rangle - \big(f^*(d) + \langle x; d_1 - d\rangle + \Phi(d_1 - d) \big) + \langle y; d \rangle \big\}\\
f(y) & \geq & \text{sup}_{d_1\in \mathbb{R}^m} \big\{ \langle y - x ; d_1 -d\rangle - \Phi(d_1 - d) + \langle y; d \rangle - f^*(d) \big\}\\
f(y) & \geq & \text{sup}_{d_1\in \mathbb{R}^m} \big\{ \langle y-x; d_1 -d\rangle - \Phi(d_1 -d) \big\} + \langle y; d \rangle + f(x) - \langle d; x\rangle\\
f(y) & \geq & f(x) + \langle d ; y - x \rangle + \Phi^*(y - x).
\end{eqnarray*}
Write $\|\cdot\|=\|\cdot\|_{\mathcal{C}}$ for some compact centrally symmetric convex set $\mathcal{C}$ with nonempty interior. 
Then, note that $\Phi = \frac{\alpha}{q}\sigma_{\mathcal{C}}^q$. 
Hence, by Lemma \ref{lem:power_norm_fenchel}, we have $\Phi^*(\cdot) = \|\cdot\|_{\mathcal{C}}^p/(p\alpha^{1/(q-1)})$. Finally
\begin{equation*}\tag*{\qedhere}
f(y) \geq f(x) + \langle d ; y - x \rangle + \frac{1}{p\alpha^{1/(q-1)}}\|y - x\|_{\mathcal{C}}^p.
\end{equation*}
\end{proof}

\subsection{Uniform Convexity and Smoothness for Sets and Spaces}\label{ssec:set_space_UC_UC}

Moduli of convexity and smoothness of a norm $\|\cdot\|_{\mathcal{C}}$ help characterize the geometry of the normed space $(\mathbb{R}^m, \|\cdot\|_{\mathcal{C}})$ or the convex set $\mathcal{C}$. This connects set uniform convexity with results in the study of Banach spaces, in the special case where $\mathcal{C}$ is centrally symmetric with nonempty interior. In Section \ref{ssec:kakade}, we provide an important use case stemming from this other perspective on uniform convexity. These moduli are classical objects characterizing either enhanced convex properties of $\mathcal{C}$ (for uniform convexity, rotundity) or regularity of the boundary of $\mathcal{C}$ (uniform smoothness). Here too, these properties are dual for a normed space and its dual space \citep{lindenstrauss1963modulus}. 

The \emph{(global) modulus of convexity} \citep{clarkson1936uniformly} is defined, for $\epsilon\in[0,2]$, as
\begin{equation}\label{eq:modulus_convexity}
\delta_{\|\cdot\|_\mathcal{C}}(\epsilon) = \text{inf}\{ 1 - \|(x+y)/2\|_{\mathcal{C}}~|~\|x\|_{\mathcal{C}} = \|y\|_{\mathcal{C}} = 1;~\|x-y\|_{\mathcal{C}}\geq \epsilon \}.
\end{equation}
The restriction of $\epsilon\in[0,2]$ ensures that the infimum is defined.
It measures the convexity of $\|\cdot\|_{\mathcal{C}}$ at \textit{midpoints} on the border of $\mathcal{C}$.
Note that the value of $\delta_{\|\cdot\|_\mathcal{C}}$ does not change by considering $(x,y)\in B_{\|\cdot\|}(1)$ in place of $S_{\|\cdot\|}(1)$, see discussion following \citep[Definition 1.e.1]{lindenstrauss2013classical}.
The modulus of smoothness \citep{lindenstrauss1963modulus} of $\|\cdot\|_\mathcal{C}$ is defined, for $\tau>0$, as
\begin{equation}\label{eq:modulus_smoothness}
\rho_{\|\cdot\|_\mathcal{C}}(\tau)  =  \text{sup}\{ (\|x + \tau y\|_{\mathcal{C}} + \|x - \tau y\|_{\mathcal{C}})/2- 1 ~|~ \|x\|_{\mathcal{C}} = \|y\|_{\mathcal{C}} =1\}.
\end{equation}
We can now define uniformly convex (resp. smooth) norm balls and normed spaces.

\begin{definition}[Uniformly Convex Set or Space]\label{def:various_definition_UC}
Consider a compact convex set $\mathcal{C}$, $p \geq 2$ and $\alpha>0$. 
Assume $\mathcal{C}$ is centrally symmetric with nonempty interior. 
$\mathcal{C}$ is $(\alpha, p)$-uniformly convex iff for any $\epsilon\in[0,2]$
\[
\delta_{\|\cdot\|_{\mathcal{C}}}(\epsilon) \geq \alpha \epsilon^p.
\]
In that case, we also say that the normed space $(\mathbb{R}^m, \|\cdot\|_{\mathcal{C}})$ is uniformly convex of type $p$.
\end{definition}

There are other equivalent definitions of the set uniform convexity of $\mathcal{C}$. We will detail some of them in Theorem \ref{th:UC_set_and_support}, prove their equivalence and discuss their practical significance.
 
\begin{definition}[Uniformly Smooth Set or Space]\label{def:set_US}
Consider a compact convex set $\mathcal{C}$ and $q \in ]1,2]$. 
Assume $\mathcal{C}$ is centrally symmetric with nonempty interior. 
$\mathcal{C}$ is $(\alpha, q)$-uniformly smooth if for any $\tau>0$, we have
\[
\rho_{\|\cdot\|_\mathcal{C}}(\tau) \leq \alpha \tau^q.
\]
In that case, we also say that the normed space $(\mathbb{R}^m,\|\cdot\|_{\mathcal{C}})$ is uniformly smooth of type $q$.
\end{definition}

When a set is $(\mu, 2)$-uniformly convex (resp. $(L, 2)$-uniformly smooth), we say it is $\mu$-strongly convex (resp. $L$-smooth), see \citep[Theorem 2.1.]{goncharov2017strong} for a thorough review on strongly convex sets in Hilbert spaces.
These properties are dual to each other, in terms of the set $\mathcal{C}$ and its polar $\mathcal{C}^\circ$, or the norm ball and its dual norm ball \citep[Proposition IV 1.12]{deville1993smoothness}.
The Lindenstrauss formula \citep[Theorem 1]{lindenstrauss1963modulus} leads to quantitative versions of that duality. 
For any $\tau>0$, we have
\BEQ\label{eq:global_lindstrauss_formula}\tag{Lindenstrauss}
\rho_{\|\cdot\|_{\mathcal{C}^\circ}}(\tau) = \text{sup}_{\epsilon\in[0,2]}\Big\{ \frac{\tau\epsilon}{2} - \delta_{\|\cdot\|_\mathcal{C}}(\epsilon)\Big\}.
\EEQ
The following lemma \citep[Proposition 1.12.]{deville1993smoothness} then quantifies this duality and is similar to Proposition \ref{prop:Duality_Fenchel_UC_US} on a function and its Fenchel conjugate.
The proof directly follows from \eqref{eq:global_lindstrauss_formula}.

\begin{proposition}[Uniform Smoothness and Convexity with dual norms]\label{prop:Duality_Polar_UC_US}
Consider $\alpha,c>0$, $p\geq 2$ and $q\in]1,2]$ such that $\frac{1}{p}+\frac{1}{q}=1$ and a compact convex set $\mathcal{C}$ centrally symmetric with nonempty interior.
We have the following implications
\begin{enumerate}[label=(\alph*)]
    \item If $\mathcal{C}$ is $(\alpha, q)$-uniformly smooth (Definition \ref{def:various_definition_UC}), then $\mathcal{C}^\circ$ is $(1/\big(2p(2\alpha q)^{1/(q-1)}\big), p)$-uniformly convex (Definition \ref{def:set_US}). \label{itm:US_Polar_UC}
    
    \item If $\mathcal{C}$ is $(c, p)$-uniformly convex, then $\mathcal{C}^\circ$ is $(1/\big(2q(2\alpha p)^{q-1}\big), q)$-uniformly smooth.
    \label{itm:UC_Polar_US}
\end{enumerate}
\end{proposition}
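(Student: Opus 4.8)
The plan is to read both implications off the Lindenstrauss formula \eqref{eq:global_lindstrauss_formula}, viewed as a Legendre--Fenchel duality between the modulus of convexity $\delta_{\|\cdot\|_{\mathcal{C}}}$ and the modulus of smoothness $\rho_{\|\cdot\|_{\mathcal{C}^\circ}}$, and then to evaluate the one-dimensional conjugates that appear using Lemma \ref{lem:power_norm_fenchel}. Throughout I will use the identities $p-1 = 1/(q-1)$, $p(q-1)=q$ and $q(p-1)=p$, all consequences of $1/p+1/q=1$, to bring the constants into the stated closed form. (In \ref{itm:UC_Polar_US} the displayed constant should be read as $1/\big(2q(2cp)^{q-1}\big)$, i.e. the $\alpha$ there as $c$.)

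For \ref{itm:UC_Polar_US}, assume $\mathcal{C}$ is $(c,p)$-uniformly convex, so $\delta_{\|\cdot\|_{\mathcal{C}}}(\epsilon)\ge c\epsilon^p$ on $[0,2]$. Substituting this lower bound into \eqref{eq:global_lindstrauss_formula} and then enlarging the range of the supremum from $[0,2]$ to $\{\epsilon\ge 0\}$ (which only weakens an upper bound) gives, for all $\tau>0$,
\[
\rho_{\|\cdot\|_{\mathcal{C}^\circ}}(\tau)\ \le\ \sup_{\epsilon\ge 0}\Big\{\tfrac{\tau}{2}\,\epsilon - c\,\epsilon^p\Big\}.
\]
The right-hand side is precisely the one-variable computation $\rho^*(\tau/2)$ for $\rho(t)=ct^p$ carried out in the proof of Lemma \ref{lem:power_norm_fenchel}, which evaluates it to $\big[(cp)^{-1/(p-1)}-c(cp)^{-p/(p-1)}\big](\tau/2)^{p/(p-1)}$. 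Since $p/(p-1)=q$, collecting the bracket over the common denominator $(cp)^q$ and using $p-1=1/(q-1)$ turns this into $\frac{c(p-1)}{2^q(cp)^q}\,\tau^q = \frac{1}{2q(2cp)^{q-1}}\,\tau^q$, which is the asserted $(\,\cdot\,,q)$-uniform smoothness of $\mathcal{C}^\circ$ in the sense of Definition \ref{def:set_US}.

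For \ref{itm:US_Polar_UC} I would run the dual argument. Bipolarity $\mathcal{C}^{\circ\circ}=\mathcal{C}$ (valid since $\mathcal{C}$ is a centrally symmetric convex body) lets me apply \eqref{eq:global_lindstrauss_formula} with $\mathcal{C}$ and $\mathcal{C}^\circ$ interchanged: $\rho_{\|\cdot\|_{\mathcal{C}}}(\tau)=\sup_{\epsilon\in[0,2]}\{\tau\epsilon/2-\delta_{\|\cdot\|_{\mathcal{C}^\circ}}(\epsilon)\}$. The smoothness hypothesis $\rho_{\|\cdot\|_{\mathcal{C}}}(\tau)\le\alpha\tau^q$ then forces, for each fixed $\epsilon\in[0,2]$ and every $\tau>0$, $\tfrac{\tau}{2}\epsilon-\delta_{\|\cdot\|_{\mathcal{C}^\circ}}(\epsilon)\le\alpha\tau^q$, hence
\[
\delta_{\|\cdot\|_{\mathcal{C}^\circ}}(\epsilon)\ \ge\ \sup_{\tau>0}\Big\{\tfrac{\tau}{2}\,\epsilon-\alpha\,\tau^q\Big\}.
\]
Applying Lemma \ref{lem:power_norm_fenchel} with $\rho(t)=\alpha t^q$ (now $q/(q-1)=p$) evaluates this supremum to $\big[(\alpha q)^{-1/(q-1)}-\alpha(\alpha q)^{-q/(q-1)}\big](\epsilon/2)^p$; simplifying the bracket to $\frac{\alpha(q-1)}{(\alpha q)^p}=\frac{\alpha}{(p-1)(\alpha q)^p}$ and dividing by $2^p$ yields $\delta_{\|\cdot\|_{\mathcal{C}^\circ}}(\epsilon)\ge\frac{1}{2p(2\alpha q)^{p-1}}\,\epsilon^p$ for all $\epsilon\in[0,2]$, i.e. the claimed $(\,\cdot\,,p)$-uniform convexity of $\mathcal{C}^\circ$ (Definition \ref{def:various_definition_UC}).

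The argument is conceptually short; the two points that need care are the legitimacy of enlarging, resp. swapping, the supremum in \eqref{eq:global_lindstrauss_formula} — enlarging the feasible set only weakens an upper bound, and the swap is just bipolarity — and the bookkeeping of the constants, where one must repeatedly trade $p-1$ for $1/(q-1)$ and use $p(q-1)=q$, $q(p-1)=p$ to see that the naturally arising constants coincide with those in the statement. That constant-chasing, rather than any genuine difficulty, is the main thing to get right.
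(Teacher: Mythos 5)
Your argument is correct and essentially identical to the paper's: both read the duality off the Lindenstrauss formula \eqref{eq:global_lindstrauss_formula} (applied to the pair $\mathcal{C},\mathcal{C}^\circ$ via bipolarity) and then evaluate the resulting one-variable Fenchel-type supremum to extract the constants, with you invoking Lemma \ref{lem:power_norm_fenchel} where the paper optimizes by hand. You are in fact a bit more careful than the paper's short proof — which only treats \ref{itm:US_Polar_UC}, silently swaps $\mathcal{C}$ and $\mathcal{C}^\circ$, and writes a stray $\le$ in place of $\ge$ — and you correctly flagged the typo in \ref{itm:UC_Polar_US}, where $\alpha$ should read $c$.
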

\begin{proof}[Proof of Proposition \ref{prop:Duality_Polar_UC_US}]
For instance, let us prove \ref{itm:US_Polar_UC}. With \eqref{eq:global_lindstrauss_formula}, we have for any $\tau>0$ and $\epsilon\in[0,2]$ that $\tau \epsilon/2 - \delta_{\|\cdot\|_{\mathcal{C}^\circ}}(\epsilon) \leq \alpha \tau^q$. Optimizing w.r.t. to $\tau$, the optimal $\tau^*=(\epsilon/(2\alpha q))^{1/(q-1)}$ leads to $\delta_{\|\cdot\|_{\mathcal{C}^\circ}}(\epsilon) \leq \frac{1}{2p}\frac{\epsilon^p}{(2\alpha q)^{1/(q-1)}}$.
\end{proof}

\subsection{Local Moduli.}
Local counterparts of the global moduli characterize local properties of $\mathcal{C}$ around a point $x^*\in\partial\mathcal{C}$ with respect to a (normalized) direction $d$ in the normal cone $N_{\mathcal{C}}(x^*)$.
As we will see, these local properties are important as they explain empirical globally accelerated convergence rates in optimization problems where the functions or constraints do not satisfy global regularity assumptions such as, \textit{e.g.}, strong convexity \citep{dunn1979rates,kerdreux2020uc}.\\
The \emph{local modulus of smoothness} \citep[(15)]{goncharov2017strong} of $\mathcal{C}$ at $x^*\in\partial\mathcal{C}$ with respect to $d\in S_{\|\cdot\|_{\mathcal{C}^\circ}}(1)$ is defined as, for $t>0$,
\begin{equation}\label{eq:local_directional_modulus_smoothness}\tag{Loc. Smoothness}
\rho_{\|\cdot\|_\mathcal{C}}(t, x^*, d) = \text{sup }\big\{ \|x^* + t x\|_{\mathcal{C}} - \|x^*\|_{\mathcal{C}} - t \langle d; x\rangle ~ | \|x\|_{\mathcal{C}}\leq 1 \big\}. 
\end{equation}
Similarly to all moduli seen so far, the local modulus of smoothness is designed so that when $t$ goes to zero, the first order terms cancel.
In the following, for convenience we write $\rho_{\mathcal{C}}$ for $\rho_{\|\cdot\|_\mathcal{C}}$.
We measure the \emph{local uniform convexity} at $x^*$ via the local modulus of \textit{rotundity}.
In the equivalent characterization of the set uniform convexity, the definition of modulus of rotundity is most related with the \textit{scaling inequalities} characterizations, see \ref{itm:scaling_inequality} in Theorems \ref{th:UC_set_and_support} and \ref{thm:local_UC_smoothness_gauge}.
For $x^*\in\partial\mathcal{C}$ and $d\in N_{\mathcal{C}}(x^*)\cap S_{\|\cdot\|_{\mathcal{C}^\circ}}(1)$, the \emph{local modulus of rotundity} at $x^*$ w.r.t. $d$ is defined for $\epsilon \in [0,2]$ as
\begin{equation}\label{eq:modulus_rotundity}\tag{Rotundity}
\mathcal{\nu}_{\mathcal{C}}(\epsilon, x^*, d) = \text{inf} \big\{ \langle d; x^* - x\rangle~|~ x\in\mathcal{C},~\|x^*-x\|_{\mathcal{C}} \geq \epsilon   \big\}.
\end{equation}
The following lemma makes the duality between smoothness and rotundity explicit by linking the two moduli, to produce a local counterpart to~\eqref{eq:global_lindstrauss_formula}. We cite a version giving a quantitative dual relationship between local modulus of smoothness and local modulus of rotundity \citep[Theorem 2.7.]{goncharov2017strong}.

\begin{lemma}[Local Lindstrauss formula]\label{lem:local_lindstrauss_formula}
Consider $x^*\in\partial\mathcal{C}$ and $d\in S_{\|\cdot\|_{\mathcal{C}^\circ}}(1)\cap N_{\mathcal{C}}(x^*)$. 
Then the local modulus of smoothness and rotundity satisfy for any $t>0$
\begin{equation}\label{eq:local_lindstrauss_formula}\tag{Loc. Lindenstrauss}
\rho_{\mathcal{C}^\circ}(t, d, x^*) = \text{sup}_{\epsilon \in [0,2] }\big\{ \epsilon t - \nu_{\mathcal{C}}(\epsilon, x^*, d)\big\}.
\end{equation}
\end{lemma}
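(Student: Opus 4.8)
The plan is to recognize the local modulus of smoothness and the local modulus of rotundity as Fenchel-type transforms of one another, then apply Lemma~\ref{lem:power_norm_fenchel} (or rather, the elementary Fenchel-conjugacy machinery underlying it) together with the polar duality $\sigma_{\mathcal{C}}=\|\cdot\|_{\mathcal{C}^\circ}$. Concretely, set $\|\cdot\| = \|\cdot\|_{\mathcal{C}}$ so that $\|\cdot\|_\star = \|\cdot\|_{\mathcal{C}^\circ}$. Since $d\in N_{\mathcal{C}}(x^*)\cap S_{\|\cdot\|_{\mathcal{C}^\circ}}(1)$ and $x^*\in\partial\mathcal{C}$, we have $\langle d; x^*\rangle = \sigma_{\mathcal{C}}(d) = \|d\|_{\mathcal{C}^\circ}\cdot\|x^*\|_{\mathcal{C}} = 1$, i.e. $d$ supports $\mathcal{C}$ at $x^*$, equivalently $x^*\in F_{\mathcal{C}}(d)$ and $d$ attains the dual-norm duality with $x^*$. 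The first step is therefore to rewrite both moduli using this normalization so that all the ``$-1$'' and ``$-t\langle d; x\rangle$'' first-order corrections become genuine linearizations of $\|\cdot\|_{\mathcal{C}^\circ}$ at $d$ and of $\langle d;\cdot\rangle$ on $\mathcal{C}$, respectively.

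The second step is the substitution turning the rotundity infimum over $x\in\mathcal{C}$ with $\|x^*-x\|_{\mathcal{C}}\geq\epsilon$ into an infimum over the ``displacement'' $w = x^* - x$; one checks the reachable set of such $w$ is exactly $\{w : \|w\|_{\mathcal{C}}\geq\epsilon,\ x^*-w\in\mathcal{C}\}$, and since $d$ supports $\mathcal{C}$ at $x^*$ one has $\langle d; w\rangle\geq 0$ on this set. Then I would write the claimed identity's right-hand side, $\sup_{\epsilon\in[0,2]}\{\epsilon t - \nu_{\mathcal{C}}(\epsilon,x^*,d)\}$, expand $\nu_{\mathcal{C}}$ as an infimum, and merge the $\sup_\epsilon$ and $\inf_w$: the constraint $\|w\|_{\mathcal{C}}\geq\epsilon$ couples them, and since $\epsilon t$ is increasing in $\epsilon$ the optimal $\epsilon$ is $\|w\|_{\mathcal{C}}$ (clipped to $[0,2]$, which is automatic because $x^*,x^*-w\in\mathcal{C}$ centrally symmetric forces $\|w\|_{\mathcal{C}}\le 2$). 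This collapses the double optimization to $\sup_{w}\{t\|w\|_{\mathcal{C}} - \langle d; w\rangle : x^*-w\in\mathcal{C}\}$. Reparametrizing $w = -tx'$... more cleanly, writing $x = -w$ one recovers precisely $\sup\{\|x^* + (\text{scaled }x)\|_{\mathcal{C}} - \|x^*\|_{\mathcal{C}} - (\text{scaled})\langle d;x\rangle\}$, which after matching scales is $\rho_{\mathcal{C}}(t,x^*,d)$ — but the statement asks for $\rho_{\mathcal{C}^\circ}$, so the final step is to invoke the pointwise duality between $\|\cdot\|_{\mathcal{C}}$ and $\|\cdot\|_{\mathcal{C}^\circ}$: the local modulus of smoothness of $\mathcal{C}^\circ$ at $d$ (the dual-norm ball point) in direction $x^*$ is, by the defining formula~\eqref{eq:local_directional_modulus_smoothness} applied to $\|\cdot\|_{\mathcal{C}^\circ}$, exactly the Fenchel/support transform of the constraint ``$x\in\mathcal{C}$'', i.e. $\rho_{\mathcal{C}^\circ}(t,d,x^*) = \sup\{\langle \cdot\rangle\}$ in the form above. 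One can alternatively cite \citep[Theorem 2.7.]{goncharov2017strong} directly, but I would prefer to give the two-line argument since the paper advertises elementary self-contained proofs.

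The main obstacle is bookkeeping the roles of $\mathcal{C}$ versus $\mathcal{C}^\circ$ and of $x^*$ versus $d$ — the asymmetry in~\eqref{eq:local_lindstrauss_formula} (smoothness of the \emph{polar}, rotundity of $\mathcal{C}$ itself) is easy to get backwards, and one must be careful that $\rho_{\mathcal{C}^\circ}(t,d,x^*)$ uses $d$ as the base point on $\partial\mathcal{C}^\circ$ (legitimate since $\|d\|_{\mathcal{C}^\circ}=1$) and $x^*\in N_{\mathcal{C}^\circ}(d)$ as the supporting direction (legitimate since $x^*$ and $d$ are in ``supporting position'' for both $\mathcal{C}$ and $\mathcal{C}^\circ$ by biduality $\mathcal{C}^{\circ\circ}=\mathcal{C}$). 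A secondary technical point is justifying that the $\sup_\epsilon$/$\inf_w$ interchange is valid and that the clipped range $\epsilon\in[0,2]$ does not lose anything; both follow from central symmetry and compactness of $\mathcal{C}$, so no delicate limiting argument is needed. Apart from this the computation is a routine change of variables, so I expect the proof to be genuinely short.
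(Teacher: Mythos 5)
Your overall strategy is genuinely different from the paper's and, when executed correctly, is shorter: the paper proves the two inequalities separately by exhibiting $\eta$-optimizers on each side and passing to the limit, whereas you observe that both sides collapse to a single joint supremum over $\mathcal{C}$. Your collapse of the right-hand side is correct: writing $w=x^*-x$, the inner infimum becomes a supremum when multiplied by $-1$, the constraint $\|w\|_{\mathcal{C}}\ge\epsilon$ is saturated at $\epsilon=\|w\|_{\mathcal{C}}$ (which lies in $[0,2]$ automatically since $x,x^*\in\mathcal{C}$), and one obtains
\[
\sup_{\epsilon\in[0,2]}\big\{\epsilon t-\nu_{\mathcal{C}}(\epsilon,x^*,d)\big\}
=\sup_{x\in\mathcal{C}}\big\{t\,\|x-x^*\|_{\mathcal{C}}+\langle d;\,x-x^*\rangle\big\}.
\]

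Where the argument goes wrong is the next step: the reparametrization $x\mapsto -w$ does \emph{not} produce $\rho_{\mathcal{C}}(t,x^*,d)$ ``after matching scales.'' The collapsed expression $\sup_{x^*+u\in\mathcal{C}}\{t\|u\|_{\mathcal{C}}+\langle d;u\rangle\}$ has the norm of the displacement multiplied by $t$ and an unscaled linear term, while $\rho_{\mathcal{C}}(t,x^*,d)=\sup_{\|x\|_{\mathcal{C}}\le 1}\{\|x^*+tx\|_{\mathcal{C}}-1-t\langle d;x\rangle\}$ has the norm of a shifted point and a $t$-scaled linear term; no rescaling turns one into the other. A concrete counterexample: for $\mathcal{C}=[-1,1]^2$, $x^*=d=e_1$, one computes that the collapsed expression equals $t$ for small $t$, whereas $\rho_{\mathcal{C}}(t,x^*,d)=0$ for small $t$. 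Your subsequent appeal to ``pointwise duality'' then does not repair a false identity. The correct and direct finish, which does not go through $\rho_{\mathcal{C}}$ at all, is to expand the defining formula of $\rho_{\mathcal{C}^\circ}(t,d,x^*)$: using $\|d\|_{\mathcal{C}^\circ}=1$ and $\|d+td'\|_{\mathcal{C}^\circ}=\sigma_{\mathcal{C}}(d+td')=\sup_{x\in\mathcal{C}}\langle x;d+td'\rangle$, swap the two suprema and then take the inner supremum over $d'\in\mathcal{C}^\circ$ to get $\sup_{d'\in\mathcal{C}^\circ}\langle x-x^*;d'\rangle=\sigma_{\mathcal{C}^\circ}(x-x^*)=\|x-x^*\|_{\mathcal{C}}$, and finally replace the remaining $-1$ by $-\langle d;x^*\rangle$. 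This yields
\[
\rho_{\mathcal{C}^\circ}(t,d,x^*)=\sup_{x\in\mathcal{C}}\big\{\langle d;\,x-x^*\rangle+t\,\|x-x^*\|_{\mathcal{C}}\big\},
\]
which matches the collapsed right-hand side term by term. With the erroneous detour removed and replaced by this three-line expansion, your proof is correct and tidier than the paper's (it avoids the $\eta$-, $\lambda$-optimizer bookkeeping), at the cost of relying on the swap of suprema, which is unproblematic here since both are unconstrained joint maximizations.
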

\begin{proof}[Proof of Lemma \ref{lem:local_lindstrauss_formula}]
Let $t>0$, by definition of $\rho_{\mathcal{C}^\circ}$, for $\eta>0$ there exists $d_{\eta}\in\mathcal{C}^\circ$ such that $\rho_{\mathcal{C}^\circ}(t, d, x^*) \leq \|d+ t d_\eta\|_{\mathcal{C}^\circ} - \|d\|_{\mathcal{C}^\circ} - t \langle x^*; d_\eta\rangle + \eta$.
Also, by compactness of $\mathcal{C}$, there exists $x_{\eta}\in\partial\mathcal{C}$ s.t. $\|t d_\eta + d\|_{\mathcal{C}^\circ}=\sigma_{\mathcal{C}}(t d_\eta + d) = \langle t d_\eta + d; x_\eta\rangle$.
Since $d\in N_{\mathcal{C}}(x^*)$, we have $\|d\|_{\mathcal{C}^\circ}=\sigma_{\mathcal{C}}(d) = \langle d; x^*\rangle$ and hence
\begin{eqnarray*}
\rho_{\mathcal{C}^\circ}(t, d, x^*) & \leq &  \|t d_\eta +d\|_{\mathcal{C}^\circ} - \|d\|_{\mathcal{C}^\circ} - t \langle x^* ; d_{\eta} \rangle + \eta\\
\rho_{\mathcal{C}^\circ}(t, d, x^*) & \leq &  \langle t d_\eta + d; x_\eta \rangle - \langle d; x^* \rangle - t \langle x^* ; d_{\eta} \rangle + \eta\\
\rho_{\mathcal{C}^\circ}(t, d, x^*) & \leq & \langle d; x_\eta - x^*\rangle + \langle t d_\eta; x_\eta - x^*\rangle + \eta \leq \langle d; x_\eta - x^*\rangle + t\sigma_{\mathcal{C}^\circ}(x_\eta - x^*) + \eta\\
\rho_{\mathcal{C}^\circ}(t, d, x^*) & \leq & \text{sup}_{x\in\mathcal{C}} \big\{\langle d; x - x^*\rangle + t \|x - x^*\|_{\mathcal{C}}\big\} + \eta\\
\rho_{\mathcal{C}^\circ}(t, d, x^*) & \leq & \text{sup}_{\epsilon\in[0,2]}\text{sup}_{x\in\mathcal{C}} \big\{\langle d; x - x^*\rangle + t \|x - x^*\|_{\mathcal{C}}~\big|~ \|x - x^*\|_{\mathcal{C}}=\epsilon\big\} + \eta\\
\rho_{\mathcal{C}^\circ}(t, d, x^*) & \leq & \text{sup}_{\epsilon\in[0,2]} \big\{ t \epsilon - \text{inf}_{x\in\mathcal{C}} \big\{\langle d; x^*-x\rangle ~|~\|x - x^*\|_{\mathcal{C}}=\epsilon \big\}\big\} + \eta\\
\rho_{\mathcal{C}^\circ}(t, d, x^*) & \leq & \text{sup}_{\epsilon\in[0,2]} \big\{ t \epsilon - \nu_{\mathcal{C}}(\epsilon, x^*, d)\big\} + \eta.
\end{eqnarray*}
We used that for any $x,y\in\mathcal{C}$ we have  $\|x-y\|_{\mathcal{C}}\in[0,2]$.
Finally, last inequality is true for any $\eta>0$, hence $\rho_{\mathcal{C}^\circ}(t, d, x^*) \leq  \text{sup}_{\epsilon\in[0,2]} \big\{ t \epsilon - \nu_{\mathcal{C}}(\epsilon, x^*, d)\big\}$. 
We now provide a similar reasoning to obtain the equality.
Indeed, for $\lambda>0$, there exists $\epsilon_{\lambda}>0$ such that $\text{sup}_{\epsilon\in[0,2]} \big\{ t \epsilon - \nu_{\mathcal{C}}(\epsilon, x^*, d)\big\} \leq t \epsilon_{\lambda} - \nu_{\mathcal{C}}(\epsilon_{\lambda}, x^*, d) + \lambda$.
Also, for $\eta>0$, there exists $x_{\eta}\in\mathcal{C}$ s.t. 
$\nu_{\mathcal{C}}(\epsilon_{\lambda}, x^*, d) \geq \langle d ; x^* - x_{\eta} \rangle - \eta$ with $\|x_{\eta}- x^*\|_{\mathcal{C}}\geq\epsilon_{\lambda}$.
By compactness of $\mathcal{C}$, there exists $d_\eta\in\mathcal{C}^\circ$ such that $\|x_{\eta} - x^*\|_{\mathcal{C}} = \sigma_{\mathcal{C}^\circ}(x^* - x_{\eta}) = \langle x^* - x_{\eta}; d_\eta\rangle$. Therefore, for $t>0$, we have 
\begin{eqnarray*}
\text{sup}_{\epsilon\in[0,2]} \big\{ t \epsilon - \nu_{\mathcal{C}}(\epsilon, x^*, d)\big\} &\leq& t \epsilon_{\lambda}  - \langle d; x^* - x_{\eta}\rangle + \lambda + \eta \leq t \|x_{\eta} - x^*\|_{\mathcal{C}}  - \langle d; x^* - x_{\eta}\rangle + \lambda + \eta \\
\text{sup}_{\epsilon\in[0,2]} \big\{ t \epsilon - \nu_{\mathcal{C}}(\epsilon, x^*, d)\big\} &\leq& t \langle d_\eta; x^* - x_{\eta} \rangle - \langle d; x^* - x_{\eta}\rangle + \lambda + \eta\\
\text{sup}_{\epsilon\in[0,2]} \big\{ t \epsilon - \nu_{\mathcal{C}}(\epsilon, x^*, d)\big\} &\leq& \langle x_{\eta}; d - t d_\eta\rangle - \langle d ; x^*\rangle + t \langle d_\eta; x^*\rangle + \lambda + \eta\\
\text{sup}_{\epsilon\in[0,2]} \big\{ t \epsilon - \nu_{\mathcal{C}}(\epsilon, x^*, p)\big\} &\leq& \sigma_{\mathcal{C}}(d - t d_\eta) - \sigma_{\mathcal{C}}(d) - t\langle x^*; -d_\eta \rangle + \lambda + \eta\\
\text{sup}_{\epsilon\in[0,2]} \big\{ t \epsilon - \nu_{\mathcal{C}}(\epsilon, x^*, d)\big\} &\leq& \|d - t d_\eta\|_{\mathcal{C}^\circ} - \|d\|_{\mathcal{C}^\circ} - t \langle x^*; -d_\eta  \rangle + \lambda + \eta.
\end{eqnarray*}
Hence, since $-d_\eta\in\mathcal{C}^\circ$, for any $\lambda, \eta>0$, we have $\nu_{\mathcal{C}}(\epsilon, x^*, d)\big\} \leq\rho_{\mathcal{C}^\circ}(t, d, x^*) \lambda + \eta$.
We conclude that $\text{sup}_{\epsilon\in[0,2]} \big\{ t \epsilon - \nu_{\mathcal{C}}(\epsilon, x^*, d)\big\} \leq \rho_{\mathcal{C}^\circ}(t, d, x^*)$.
\end{proof}

\section{Equivalence between Global Set and Functional Assumptions}\label{sec:global_equivalence}

We expose some classical equivalence between functional and geometrical properties in Theorem \ref{th:UC_set_and_support} below.
This leads to new insights in learning theory in Section \ref{ssec:kakade} and in optimization in Sections \ref{ssec:lojasiewicz}-\ref{ssec:PAFW}.

Item \ref{itm:ML_def_UC} is similar to the definition appearing in most machine learning papers \citep{garber2015faster,huang2016following,huang2017following} and gives an intuitive understanding of set uniform convexity. 
The uniformly \text{mid-convex} property is equivalent to its continuous counterpart, see, \textit{e.g.}, \citep[Lemma 9]{Molinaro20}, but allows more concise proofs.

Item \ref{itm:scaling_inequality} is an essential inequality in analysing projection-free online or offline optimization methods. 
There are other related and useful inequalities that can be seamlessly derived from this one, see, \textit{e.g.}, Lemma \ref{lem:assumption_verified}.

Item \ref{itm:sphere_support_smoothness}-\ref{itm:global_gauge_HEB_ball} provides equivalent functional properties of the gauge and support function of $\mathcal{C}$.
Note that $\mathcal{C}$ is UC, but it is only a power of its gauge $\|\cdot\|_{\mathcal{C}}$ that is UC in the sense of functions. 
Also, the support function is only \textit{partially} H\"older smooth as Item \ref{itm:sphere_support_smoothness} holds on the sphere $S_{\|\cdot\|_{\mathcal{C}^\circ}}(1)$. 
Again, it is only a specific power of the support function that is uniformly smooth in the sense of functions without restriction on its domain.

Finally, item \ref{itm:modulus_UC} connects all other perspectives with the study of uniformly convex Banach spaces.
This connection is rich with hindsights, see, \textit{e.g.}, Section \ref{ssec:kakade}.

These results are classical and appear in many textbooks \citep{deville1993smoothness,lindenstrauss2013classical} often in non-quantitative, scattered, or too generic forms. We detail self-contained elementary proofs and provide quantitative versions in the finite-dimensional setting. 
Further, we only present the most practically significant equivalent characterizations here.
In Section \ref{sec:local_equivalence}, we will provide similar quantitative results with local uniform convexity and smoothness of $\mathcal{C}$. 

\begin{theorem}[Global Set Uniform Convexity]\label{th:UC_set_and_support}
Consider $p\geq 2$ and $q\in]1,2]$ s.t. $\frac{1}{p} + \frac{1}{q}=1$.
Let $\mathcal{C}$ be a centrally symmetric compact convex set with nonempty interior.
The following assertions are equivalent
\begin{enumerate}[label=(\alph*)]
    \item (Set mid-convex property) There exists $\alpha>0$ s.t. for all $(x,y)\in\mathcal{C}$ we have 
    \[
    \frac{x+y}{2}+ \alpha \|x-y\|_{\mathcal{C}}^p B_{\|\cdot\|_{\mathcal{C}}}(1)\subset\mathcal{C}.
    \]
    \label{itm:ML_def_UC}
    
    \item (Global scaling inequality) There exists $\alpha>0$ s.t. for any $(x,y)\in \mathcal{C}\times\partial\mathcal{C}$ and $d\in\mathbb{R}^m$ with $d\in N_{\mathcal{C}}(y)$ (or $y\in\text{argmax}_{v\in\mathcal{C}}\langle d; v\rangle$) we have
    \begin{equation}\label{eq:global_scaling}\tag{Global-Scaling}
    \langle d ; y -x \rangle \geq \alpha \|d\|_{\mathcal{C}^\circ} \| y-x\|_{\mathcal{C}}^p.
    \end{equation}
    \label{itm:scaling_inequality}

    \item (Set Modulus UC) There exists $\alpha>0$ s.t. $\mathcal{C}$ is $(\alpha, p)$-uniformly convex (Definition \ref{def:various_definition_UC}), \textit{i.e.}, for any $\epsilon>0$, we have the following lower bound on the modulus \eqref{eq:modulus_convexity},
    \[
    \delta_{\|\cdot\|_\mathcal{C}}(\epsilon)\geq \alpha \epsilon^p.
    \]
    \label{itm:modulus_UC}
    
    \item (Support H\"older-Smooth Sphere)
    The exists $c>0$ s.t. the support function $\sigma_{\mathcal{C}}(\cdot)$ is $(c, q-1)$-H\"older smooth with respect to $\|\cdot\|_{\mathcal{C}^\circ}$ on $S_{\|\cdot\|_{\mathcal{C}^\circ}}(1)$, \textit{i.e.}, it is differentiable on $S_{\|\cdot\|_{\mathcal{C}^\circ}}$ and for any $(d_1,d_2) \in S_{\|\cdot\|_{\mathcal{C}^\circ}}(1)$, we have
    \begin{equation*}
    \big\|\nabla \sigma_{\mathcal{C}}(d_1) - \nabla \sigma_{\mathcal{C}}(d_2)\big\|_{\mathcal{C}} \leq c \big\|d_1 - d_2\big\|_{\mathcal{C}^\circ}^{q-1}= c \big\|d_1 - d_2\big\|_{\mathcal{C}^\circ}^{1/(p-1)}.
    \end{equation*}
    \label{itm:sphere_support_smoothness}
    
    \item (Support US)
    $\sigma_{\mathcal{C}}^q(\cdot)$ is differentiable on $\mathbb{R}^m$ and there exists $c>0$ s.t. $\sigma_{\mathcal{C}}^q(\cdot)$ is $(c, q)$-uniformly smooth on $\mathbb{R}^m$ with respect to $\|\cdot\|_{\mathcal{C}^\circ}$ for some $c>0$.
    \label{itm:global_support_smoothness}

    \item (Gauge UC) There exists $\alpha>0$ s.t. $\|\cdot\|_{\mathcal{C}}^p$ is $(\alpha, p)$-uniformly convex with respect to $\|\cdot\|_{\mathcal{C}}$ (Definition \ref{def:uniformly_convex_functions}).
    \label{itm:global_gauge_HEB_ball}
\end{enumerate}
\end{theorem}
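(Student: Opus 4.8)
The plan is to prove the six assertions equivalent along the chain $\ref{itm:ML_def_UC}\Leftrightarrow\ref{itm:modulus_UC}\Leftrightarrow\ref{itm:scaling_inequality}\Rightarrow\ref{itm:sphere_support_smoothness}\Leftrightarrow\ref{itm:global_support_smoothness}\Leftrightarrow\ref{itm:global_gauge_HEB_ball}\Rightarrow\ref{itm:scaling_inequality}$, i.e.\ a cycle on $\{\ref{itm:scaling_inequality},\ref{itm:sphere_support_smoothness},\ref{itm:global_support_smoothness},\ref{itm:global_gauge_HEB_ball}\}$ with the geometric items $\ref{itm:ML_def_UC},\ref{itm:modulus_UC}$ attached to it. The only ingredients are Lemma \ref{lem:differentiability_support} (to read differentiability of $\sigma_{\mathcal{C}}$ off strict convexity), Lemma \ref{lem:power_norm_fenchel} (Fenchel conjugate of a power of a norm), Proposition \ref{prop:Duality_Fenchel_UC_US} (function-level Fenchel duality), and the internal equivalences inside Definitions \ref{def:uniformly_smooth_functions} and \ref{def:uniformly_convex_functions}.

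The equivalence $\ref{itm:ML_def_UC}\Leftrightarrow\ref{itm:modulus_UC}$ is a translation: since $z+rB_{\|\cdot\|_{\mathcal{C}}}(1)\subset\mathcal{C}$ exactly when $\|z\|_{\mathcal{C}}+r\le1$, the containment in $\ref{itm:ML_def_UC}$ with $z=(x+y)/2$ and $r=\alpha\|x-y\|_{\mathcal{C}}^p$ says $1-\|(x+y)/2\|_{\mathcal{C}}\ge\alpha\|x-y\|_{\mathcal{C}}^p$ for all $x,y\in\mathcal{C}$, which is $\delta_{\|\cdot\|_{\mathcal{C}}}(\epsilon)\ge\alpha\epsilon^p$ once one uses that $\delta_{\|\cdot\|_{\mathcal{C}}}$ is unchanged if the competing pairs range over $B_{\|\cdot\|_{\mathcal{C}}}(1)$ rather than $S_{\|\cdot\|_{\mathcal{C}}}(1)$ (discussion after \citep[Def.~1.e.1]{lindenstrauss2013classical}). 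For $\ref{itm:modulus_UC}\Leftrightarrow\ref{itm:scaling_inequality}$ the argument is via midpoints. For ``$\Rightarrow$'': normalizing $\|d\|_{\mathcal{C}^\circ}=1$, with $y\in\text{argmax}_{\mathcal{C}}\langle d;\cdot\rangle$ (so $\langle d;y\rangle=1$) and $x\in\mathcal{C}$, the midpoint $m=(x+y)/2$ satisfies $\langle d;m\rangle=1-\tfrac12\langle d;y-x\rangle$ and, by the ball form of $\ref{itm:modulus_UC}$, $\langle d;m\rangle\le\|m\|_{\mathcal{C}}\le1-\alpha\|x-y\|_{\mathcal{C}}^p$, giving \eqref{eq:global_scaling} with constant $2\alpha$ (general $d$ by homogeneity). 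For ``$\Leftarrow$'': for $x,y\in\partial\mathcal{C}$ with $x+y\ne0$, set $w=(x+y)/\|x+y\|_{\mathcal{C}}\in\partial\mathcal{C}$ and pick $d\in N_{\mathcal{C}}(w)$ with $\|d\|_{\mathcal{C}^\circ}=1$; applying \eqref{eq:global_scaling} to $(w,x)$ and $(w,y)$ and summing, with $2w-x-y=(2-\|x+y\|_{\mathcal{C}})w$ and $\|w-x\|_{\mathcal{C}}^p+\|w-y\|_{\mathcal{C}}^p\ge2^{1-p}\|x-y\|_{\mathcal{C}}^p$ (triangle inequality plus convexity of $t\mapsto t^p$), yields $1-\|(x+y)/2\|_{\mathcal{C}}\ge\tfrac{\alpha}{2^p}\|x-y\|_{\mathcal{C}}^p$; the case $x+y=0$ forces $\|x-y\|_{\mathcal{C}}=2$ and $\delta_{\|\cdot\|_{\mathcal{C}}}(2)=1$, absorbed by shrinking $\alpha$.

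The passage to the functional side begins with $\ref{itm:scaling_inequality}\Rightarrow\ref{itm:sphere_support_smoothness}$: \eqref{eq:global_scaling} forces $F_{\mathcal{C}}(d)$ to be a single point for every $d\ne0$, so $\mathcal{C}$ is strictly convex and by Lemma \ref{lem:differentiability_support} $\sigma_{\mathcal{C}}$ is differentiable on $\mathbb{R}^m\setminus\{0\}$ with $\nabla\sigma_{\mathcal{C}}(d)=F_{\mathcal{C}}(d)$; applying \eqref{eq:global_scaling} to $(y_1,y_2)$ with $d_1$ and to $(y_2,y_1)$ with $d_2$, where $d_1,d_2\in S_{\|\cdot\|_{\mathcal{C}^\circ}}(1)$ and $y_i=\nabla\sigma_{\mathcal{C}}(d_i)$, and summing gives $\langle d_1-d_2;y_1-y_2\rangle\ge2\alpha\|y_1-y_2\|_{\mathcal{C}}^p$, while the left side is $\le\|d_1-d_2\|_{\mathcal{C}^\circ}\|y_1-y_2\|_{\mathcal{C}}$, whence $\|\nabla\sigma_{\mathcal{C}}(d_1)-\nabla\sigma_{\mathcal{C}}(d_2)\|_{\mathcal{C}}\le(2\alpha)^{-1/(p-1)}\|d_1-d_2\|_{\mathcal{C}^\circ}^{q-1}$. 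Next, $\ref{itm:sphere_support_smoothness}\Leftrightarrow\ref{itm:global_support_smoothness}$ is a homogenization: with $g=\sigma_{\mathcal{C}}=\|\cdot\|_{\mathcal{C}^\circ}$ ($1$-homogeneous, $\nabla g$ being $0$-homogeneous) one has $\nabla(g^q)(d)=q\,g(d)^{q-1}\nabla g(d)$ for $d\ne0$ and $\nabla(g^q)(0)=0$ (since $q>1$), and one checks this gradient is $(q-1)$-Hölder on all of $\mathbb{R}^m$ w.r.t.\ $\|\cdot\|_{\mathcal{C}^\circ}$ by writing $g(d_1)^{q-1}\nabla g(u_1)-g(d_2)^{q-1}\nabla g(u_2)$ with $u_i=d_i/g(d_i)$, bounding the first difference via $\ref{itm:sphere_support_smoothness}$ together with $\|u_1-u_2\|_{\mathcal{C}^\circ}\le2\|d_1-d_2\|_{\mathcal{C}^\circ}/g(d_1)$ and the second via $|a^{q-1}-b^{q-1}|\le|a-b|^{q-1}$ and $\|\nabla g(u_2)\|_{\mathcal{C}}\le1$; the Hölder-gradient characterization of Definition \ref{def:uniformly_smooth_functions} then delivers $(c,q)$-uniform smoothness of $g^q$, and the converse is just restriction to the sphere, where $\nabla(g^q)=q\nabla g$. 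Finally, $\ref{itm:global_support_smoothness}\Leftrightarrow\ref{itm:global_gauge_HEB_ball}$ is Fenchel duality: by Lemma \ref{lem:power_norm_fenchel} the conjugate of a positive multiple of $\sigma_{\mathcal{C}}^q=\|\cdot\|_{\mathcal{C}^\circ}^q$ is a positive multiple of $\|\cdot\|_{\mathcal{C}}^p$ (and symmetrically, using $q/(q-1)=p$), so Proposition \ref{prop:Duality_Fenchel_UC_US} carries $(c,q)$-uniform smoothness of $\sigma_{\mathcal{C}}^q$ to $(\alpha,p)$-uniform convexity of $\|\cdot\|_{\mathcal{C}}^p$ and back, the scalar factors being absorbed into the constants. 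The loop closes with $\ref{itm:global_gauge_HEB_ball}\Rightarrow\ref{itm:scaling_inequality}$: the first-order form of Definition \ref{def:uniformly_convex_functions} applied to $\|\cdot\|_{\mathcal{C}}^p$ at $y\in\partial\mathcal{C}$ with subgradient $p\,d$, where $d\in N_{\mathcal{C}}(y)$ is normalized so $\|d\|_{\mathcal{C}^\circ}=\langle d;y\rangle=1$, gives $\|x\|_{\mathcal{C}}^p\ge1+p\langle d;x-y\rangle+\tfrac{\alpha'}{p}\|x-y\|_{\mathcal{C}}^p$ for all $x$; restricting to $x\in\mathcal{C}$ (so $\|x\|_{\mathcal{C}}\le1$) and using homogeneity in $d$ yields \eqref{eq:global_scaling}.

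I expect the most delicate points to be the ball-versus-sphere normalizations (and the degenerate pair $y=-x$) in $\ref{itm:ML_def_UC}\Leftrightarrow\ref{itm:modulus_UC}\Leftrightarrow\ref{itm:scaling_inequality}$, and the homogenization estimate $\ref{itm:sphere_support_smoothness}\Leftrightarrow\ref{itm:global_support_smoothness}$, where one must control $\|u_1-u_2\|_{\mathcal{C}^\circ}$ and the factors $g(d_i)^{q-1}$ when $d_1,d_2$ differ greatly in magnitude; the remainder is bookkeeping over the duality results already proven. An alternative bridge would be to link $\ref{itm:modulus_UC}$ and $\ref{itm:global_support_smoothness}$ through the set-level duality of Proposition \ref{prop:Duality_Polar_UC_US} and the equivalence between the modulus of smoothness of $\|\cdot\|_{\mathcal{C}^\circ}$ and uniform smoothness of $\|\cdot\|_{\mathcal{C}^\circ}^q$, but the route above stays self-contained.
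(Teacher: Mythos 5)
Your proposal is correct, and it reaches the same conclusion as the paper but via a genuinely different logical decomposition. The paper's proof runs \ref{itm:ML_def_UC}$\Leftrightarrow$\ref{itm:modulus_UC}, then a loop \ref{itm:ML_def_UC}$\Rightarrow$\ref{itm:scaling_inequality}$\Rightarrow$\ref{itm:sphere_support_smoothness}$\Rightarrow$\ref{itm:global_support_smoothness}$\Rightarrow$\ref{itm:modulus_UC}, and separately \ref{itm:global_support_smoothness}$\Leftrightarrow$\ref{itm:global_gauge_HEB_ball}; the crucial closing step \ref{itm:global_support_smoothness}$\Rightarrow$\ref{itm:modulus_UC} there passes through Proposition \ref{prop:Duality_Polar_UC_US}, which is itself derived from the \eqref{eq:global_lindstrauss_formula} formula (the paper even remarks that the corresponding argument in some references is incomplete, which is why it spells out its own). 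You instead close the loop as \ref{itm:scaling_inequality}$\Rightarrow$\ref{itm:sphere_support_smoothness}$\Rightarrow$\ref{itm:global_support_smoothness}$\Rightarrow$\ref{itm:global_gauge_HEB_ball}$\Rightarrow$\ref{itm:scaling_inequality}, where the new step \ref{itm:global_gauge_HEB_ball}$\Rightarrow$\ref{itm:scaling_inequality} is a direct first-order subgradient argument: $p\,d\in\partial(\|\cdot\|_{\mathcal{C}}^p)(y)$ for a normalized normal direction $d$ at $y\in\partial\mathcal{C}$, and restricting the uniform-convexity inequality to $x\in\mathcal{C}$ (where $\|x\|_{\mathcal{C}}\le 1$) collapses the norm terms and yields \eqref{eq:global_scaling}. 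You also add the converse implication \ref{itm:scaling_inequality}$\Rightarrow$\ref{itm:modulus_UC} directly via midpoints, which the paper does not prove (it only needs \ref{itm:ML_def_UC}$\Rightarrow$\ref{itm:scaling_inequality}). The remaining steps --- \ref{itm:ML_def_UC}$\Leftrightarrow$\ref{itm:modulus_UC} via the ball form of the modulus, \ref{itm:scaling_inequality}$\Rightarrow$\ref{itm:sphere_support_smoothness} by summing two scaling inequalities, the homogenization \ref{itm:sphere_support_smoothness}$\Leftrightarrow$\ref{itm:global_support_smoothness}, and \ref{itm:global_support_smoothness}$\Leftrightarrow$\ref{itm:global_gauge_HEB_ball} via Lemma \ref{lem:power_norm_fenchel} and Proposition \ref{prop:Duality_Fenchel_UC_US} --- are essentially identical to the paper's. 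The net effect is that your route avoids any appeal to the modulus-level Lindenstrauss duality (Proposition \ref{prop:Duality_Polar_UC_US}), which makes the proof of Theorem \ref{th:UC_set_and_support} self-contained modulo the purely functional Lemma \ref{lem:power_norm_fenchel} and Proposition \ref{prop:Duality_Fenchel_UC_US}; the paper's route, by contrast, exhibits the link with the classical Banach-space duality of moduli explicitly, at the cost of one more auxiliary result. The quantitative constants will naturally differ between the two derivations, but both give fully quantitative statements.
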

\begin{proof}[Proof of Theorem \ref{th:UC_set_and_support}]

\ref{itm:ML_def_UC} $\implies$ \ref{itm:modulus_UC}. 
Let $(x,y)\in S_{\|\cdot\|_{\mathcal{C}}}(1)$. For $z=\frac{x+y}{\|x+y\|_{\mathcal{C}}}$, we have $\frac{x+y}{2}+\alpha\|x-y\|_{\mathcal{C}}^pz\in\mathcal{C}$. Hence
\[
\Big\|\frac{x+y}{2}\Big\|_{\mathcal{C}}\Big( 1 + \alpha \|x - y\|_{\mathcal{C}}^p \frac{2}{\|x + y\|_{\mathcal{C}}}\Big) \leq 1.
\]
This shows that $1 - \|(x+y)/2\|_{\mathcal{C}}\geq \alpha \|x - y\|^p_{\mathcal{C}}$ and hence $\delta_{\|\cdot\|_{\mathcal{C}}}(\epsilon) \geq \alpha \epsilon^p$.

\bigskip

\ref{itm:modulus_UC} $\implies$ \ref{itm:ML_def_UC}. 
Recall that the modulus of convexity $\rho_{\|\cdot\|_{\mathcal{C}}}(\epsilon)$ in~\eqref{eq:modulus_convexity}, can be written as the infimum over $(x,y)\in B_{\|\cdot\|}(1)$ instead of $S_{\|\cdot\|}(1)$, see discussion following \citep[Definition 1.e.1]{lindenstrauss2013classical}. 
Let $(x,y)\in\mathcal{C}$. 
By definition of the modulus of convexity, we have $1 - \|(x+y)/2\|_{\mathcal{C}} \geq \alpha \|x - y\|^p_{\mathcal{C}}$.
Hence by the triangle inequality, for any $z\in B_{\|\cdot\|_{\mathcal{C}}}(1)$, we have $\|(x+y)/2+ \alpha \|x-y\|_{\mathcal{C}}^p z\|_{\mathcal{C}} \leq 1$, so that $(x+y)/2 + \alpha \|x-y\|_{\mathcal{C}}^p z\in\mathcal{C}$.

\bigskip

\ref{itm:ML_def_UC} $\implies$ \ref{itm:scaling_inequality}. Let $x\in\mathcal{C}$, $y\in\partial\mathcal{C}$ and $d\in\mathbb{R}^m$ s.t. $d\in N_{\mathcal{C}}(y)$. 
We have $y\in\text{argmax}_{v\in\mathcal{C}}\langle d; v\rangle$. 
Because $(x+y)/2 + \alpha\|x-y\|_{\mathcal{C}}^p z\in\mathcal{C}$, for any $z\in B_{\|\cdot\|_{\mathcal{C}}}(1)$, the optimality of $y$ implies
\[
\langle d ; (x+y)/2 + \alpha\|x-y\|_{\mathcal{C}}^p z\rangle \leq \langle d; y\rangle.
\]
Hence, for any $z\in B_{\|\cdot\|_{\mathcal{C}}}(1)$ we have $ 2 \alpha \|x-y\|_{\mathcal{C}}^p \langle d; z \rangle \leq \langle d; y-x\rangle$. 
By definition of the dual norm, we hence have $2\alpha \|x-y\|_{\mathcal{C}}^p \|d\|_{\mathcal{C}}^\star \leq \langle d; y-x\rangle$ and conclude with $\|d\|_{\mathcal{C}}^\star = \|d\|_{\mathcal{C}^{\circ}}$.

\bigskip

\ref{itm:scaling_inequality} $\implies$ \ref{itm:sphere_support_smoothness}. 
Let $(d_1, d_2) \in S_{\|\cdot\|_{\mathcal{C}^\circ}}(1)$ and
consider $v_{d_i}\in\text{argmax}_{v\in\mathcal{C}}\langle d_i; v\rangle$ for $i=1,2$.
We have that for any $x\in\mathcal{C}$
\begin{equation*}
   \left\{
    \begin{split}
    \langle d_1; v_{d_1} - x\rangle &\geq \alpha \|d_1\|_{\mathcal{C}^\circ}\cdot \|v_{d_1} - x\|_{\mathcal{C}}^p = \alpha \|v_{d_1} - x\|_{\mathcal{C}}^p \\
    \langle d_2; v_{d_2} - x\rangle &\geq \alpha \|d_2\|_{\mathcal{C}^\circ}\cdot \|v_{d_2} - x\|_{\mathcal{C}}^p = \alpha \|v_{d_2} - x\|_{\mathcal{C}}^p.
    \end{split}
   \right.
\end{equation*}
Then, by summing the two inequalities evaluated respectively at $x=v_{d_2}$ and $x=v_{d_1}$, we have
\[
\langle d_1 - d_2; v_{d_1} - v_{d_2} \rangle \geq 2\alpha \|v_{d_1} - v_{d_2}\|_{\mathcal{C}}^p.
\]
By Cauchy-Schwartz and since $v_{d_i}=\nabla \sigma_{\mathcal{C}}(d_i)$ for $i=1,2$ (Lemma \ref{lem:differentiability_support} applies because $\mathcal{C}$ is strictly convex and $d_i\neq 0$), we obtain
\[
\|d_1 - d_2\|_{\mathcal{C}^\circ} \cdot \|\nabla \sigma_{\mathcal{C}}(d_1) - \nabla \sigma_{\mathcal{C}}(d_2)\|_{\mathcal{C}} \geq 2\alpha \|\nabla \sigma_{\mathcal{C}}(d_1) - \nabla \sigma_{\mathcal{C}}(d_2)\|_{\mathcal{C}}^p,
\]
and conclude that
\[
\|\nabla \sigma_{\mathcal{C}}(d_1) - \nabla \sigma_{\mathcal{C}}(d_2)\|_{\mathcal{C}} \leq \frac{1}{(2\alpha)^{1/(p-1)}} \|d_1 - d_2\|_{\mathcal{C}^\circ}^{1/(p-1)}.
\]
Note finally that $1/(p-1)= q-1$.

\bigskip

\ref{itm:global_support_smoothness} $\implies$ \ref{itm:modulus_UC}. 
Note that \cite[(d) $\implies$ (a) of Theorem 2.2.]{borwein2009uniformly} is not constructive and that \citep[(ii) $\implies$ (i) in Lemma 5.1.]{deville1993smoothness} is incomplete as it only proves that the modulus of smoothness has the right lower-bound for $\tau\in[0,1/2[$.
\cite{lindenstrauss2013classical} do not consider these aspects and \cite[\S 26]{kothe1983topological} neither.
\citep[(iii) of Theorem 3.7.4.]{zalinescu2002convex} bears some similarity.
Recall the duality between support and gauge functions $\sigma_C(\cdot) = \|\cdot\|_{\mathcal{C}^\circ}$. 
We now show that $\mathcal{C}^\circ$ is uniformly smooth by providing an upper bound on its modulus of smoothness and conclude on~\ref{itm:modulus_UC} by duality. 
Recall that for $\tau>0$, the modulus of smoothness of $\mathcal{C}^\circ$ is defined as
\[
\rho_{C^\circ}(\tau) = \text{sup}\big\{ \big(\|d_1 + \tau d_2\|_{\mathcal{C}^\circ} + \|d_1 - \tau d_2\|_{\mathcal{C}^\circ}\big)/2 - 1 ~ \big| ~ \|d_1\|_{\mathcal{C}^\circ} = \|d_2\|_{\mathcal{C}^\circ}=1\big\}.
\]
Consider $(d_1,d_2)\in S_{\|\cdot\|_{\mathcal{C}^\circ}}$, since $\sigma_{\mathcal{C}}^q$ is $(c,q)$-uniformly smooth on $\mathbb{R}^m$ and by equivalence between \ref{itm:fct_zero_order_smoothness} and \ref{itm:fct_first_order_smoothness} in Definition \ref{def:uniformly_smooth_functions}, we have
\begin{equation*}
   \left\{
    \begin{split}
    \|d_1 + \tau d_2\|_{\mathcal{C}^\circ}^q &\leq 1 + \langle \nabla \|\cdot\|^q_{\mathcal{C}^\circ}(d_1); \tau d_2 \rangle + \frac{2c}{q} \|\tau d_2\|_{\mathcal{C}^\circ}^{q}\\
    \|d_1 - \tau d_2\|_{\mathcal{C}^\circ}^q &\leq 1 - \langle \nabla \|\cdot\|^q_{\mathcal{C}^\circ}(d_1); \tau d_2 \rangle + \frac{2c}{q} \|\tau d_2\|_{\mathcal{C}^\circ}^{q}.
    \end{split}
   \right.
\end{equation*}
When $q\in]1,2]$, $(1+x)^{1/q}$ is concave and below its tangent.
In particular, $(1 + x)^{1/q}\leq 1 + x/q$. 
Hence, combined with $\|d_2\|_{\mathcal{C}^\circ}=1$, we have
\begin{equation*}
   \left\{
    \begin{split}
    \|d_1 + \tau d_2\|_{\mathcal{C}^\circ} &\leq 1 + \frac{1}{q}\langle \nabla \|\cdot\|^q_{\mathcal{C}^\circ}(d_1); \tau d_2 \rangle + \frac{2c}{q^2} \tau^{q}\\
    \|d_1 - \tau d_2\|_{\mathcal{C}^\circ} &\leq 1 - \frac{1}{q}\langle \nabla \|\cdot\|^q_{\mathcal{C}^\circ}(d_1); \tau d_2 \rangle + \frac{2c}{q^2} \tau^{q}.
    \end{split}
   \right.
\end{equation*}
Then summing the two inequalities and dividing by $2$, we obtain
\[
\big(\|d_1 + \tau d_2\|_{\mathcal{C}^\circ} + \|d_1 - \tau d_2\|_{\mathcal{C}^\circ}\big)/2 - 1 \leq \frac{2c}{q^2} \tau^{q}.
\]
Hence, $\mathcal{C}^\circ$ is $(2c/q^2,q)$-uniformly smooth.
Then Proposition \ref{prop:Duality_Polar_UC_US} \ref{itm:US_Polar_UC}, implies that $\mathcal{C}$ is $(1/(2p(2\alpha q)^{p-1}), p)$-uniformly convex with $\alpha=2c/q^2$, \text{i.e.}, $\mathcal{C}$ is $(q^{p-1}/(2^{2p-1}pc^{p-1}))$-uniformly convex.

\bigskip

\ref{itm:global_gauge_HEB_ball} $\implies$ \ref{itm:global_support_smoothness}
From Lemma \ref{lem:power_norm_fenchel}, we have that $\big(\|\cdot\|_{\mathcal{C}}^p\big)^\star(\cdot) = \Big[\frac{1}{p^{1/(p-1)}} - \frac{1}{p^q}\Big]\sigma_{\mathcal{C}}^q(\cdot)$. 
Then Item \ref{itm:UC_Fenchel_US} of Proposition \ref{prop:Duality_Fenchel_UC_US} implies that $\Big[\frac{p-1}{p^{q-1}}\Big]\sigma_{\mathcal{C}}^q(\cdot)$ is $(c^\prime, q)$-uniformly smooth on $\mathcal{C}$ with respect to $\|\cdot\|_{\mathcal{C}}^\star= \|\cdot\|_{\mathcal{C}^\circ}$ and $c^\prime=1/(qc^{q-1})$.
Hence, $\sigma_{\mathcal{C}}^q$ is $(\big[\frac{p^{q-1}}{(p-1)qc^{q-1}}\big], q)$-uniformly smooth.
Note also that by equivalence between \ref{itm:fct_zero_order_smoothness} and \ref{itm:fct_first_order_smoothness} in Definition \ref{def:uniformly_smooth_functions}, we have that $\sigma_{\mathcal{C}}^q$ is differentiable.

\bigskip

\ref{itm:global_support_smoothness} $\implies$ \ref{itm:global_gauge_HEB_ball}. Conversely, let us assume that $\sigma_{\mathcal{C}}^q$ is $(\alpha, q)$-uniformly smooth.
From Lemma \ref{lem:power_norm_fenchel}, we have that $\big(\sigma_{\mathcal{C}}^q\big)^\star(\cdot) = \Big[\frac{1}{q^{1/(q-1)}} - \frac{1}{q^p}\Big]\|\cdot\|_{\mathcal{C}}^p(\cdot)$.
And, with Proposition \ref{prop:Duality_Fenchel_UC_US} \ref{itm:US_Fenchel_UC}, $\Big[\frac{q-1}{q^{p-1}}\Big]\|\cdot\|_{\mathcal{C}}^p(\cdot)$ is $(c^\prime, p)$-uniformly convex with respect to $\|\cdot\|_{\mathcal{C}}$ with $c^\prime = 1/(p\alpha^{p-1})$.
Finally, we conclude that $|\cdot\|_{\mathcal{C}}^p(\cdot)$ is $(\big[\frac{q^{p-1}}{(q-1)p\alpha^{p-1}}\big], p)$-uniformly convex.

\bigskip

\ref{itm:sphere_support_smoothness} $\implies$ \ref{itm:global_support_smoothness}. 
Conversely, let us show that $\sigma_{\mathcal{C}}^q(\cdot)$ is uniformly smooth.
The proof follows that of \citep[Theorem 2.1.]{borwein2009uniformly}.
Let us start by showing that $\sigma_{\mathcal{C}}^q(\cdot)$ is differentiable on $\mathbb{R}^m$. 
For $d_1\in\mathbb{R}^m\setminus\{0\}$, we have $\nabla \sigma_{\mathcal{C}}^q(d_1) = q \|d_1\|^{q-1} \nabla \sigma_{\mathcal{C}}(d_1)$.
Because $\mathcal{C}$ is strictly convex, there is a unique $x_1\in\partial\mathcal{C}$ s.t. $d_1\in N_{\mathcal{C}}(x_1)$.
From \citep[Corollary 1.7.3.]{schneider2014convex}, we have that $\nabla \sigma_{\mathcal{C}}(d_1)=x_1$.
Because $q>1$, when $d_1$ converges to $0$, we have that $\nabla \sigma_{\mathcal{C}}^q(d_1)$ also converges to zero. 
Hence, $\sigma_{\mathcal{C}}^q$ is differentiable at zero with $\nabla \sigma_{\mathcal{C}}^q(0)=0$.\\
Let $(d_1,d_2)\in\mathbb{R}^m$ and $x_i\in\partial\mathcal{C}$ s.t. $d_i\in N_{\mathcal{C}}(x_i)$, \textit{i.e.}, $\nabla \sigma_{\mathcal{C}}(d_i)=x_i$. 
Because $\sigma_{\mathcal{C}}$ is H\"older smooth on $S_{\|\cdot\|_{\mathcal{C}^\circ}}$, we have $\|\nabla \sigma_{\mathcal{C}}(d_1) - \nabla \sigma_{\mathcal{C}}(d_2)\|_{\mathcal{C}} \leq c\| d_1/\|d_1\|_{\mathcal{C}^\circ} - d_2/\|d_2\|_{\mathcal{C}^\circ}\|_{\mathcal{C}^\circ}^{1/(q-1)}$.
We then obtain
\begin{eqnarray*}
\|\nabla \sigma_{\mathcal{C}}^q(d_1) - \nabla \sigma_{\mathcal{C}}^q(d_2)\|_{\mathcal{C}} & = & \| q \sigma_{\mathcal{C}}^{q-1}(d_1) \nabla \sigma_{\mathcal{C}}(d_1) - q \sigma_{\mathcal{C}}^{q-1}(d_2) \nabla \sigma_{\mathcal{C}}(d_2)\|_{\mathcal{C}}\\
&\leq & q \sigma_{\mathcal{C}}^{q-1}(d_1) \big\| \nabla \sigma_{\mathcal{C}}(d_1) - \nabla \sigma_{\mathcal{C}}(d_2)\big\|_{\mathcal{C}} + q \big\| \nabla \sigma_{\mathcal{C}}(d_2) \big\|_{\mathcal{C}} \big|\sigma_{\mathcal{C}}^{q-1}(d_1) - \sigma_{\mathcal{C}}^{q-1}(d_2) \big|\\
&\leq & qc\|d_1\|_{\mathcal{C}^\circ}^{q-1} \big\|d_1/\|d_1\|_{\mathcal{C}^\circ} - d_2/\|d_2\|_{\mathcal{C}^\circ}\big\|^{q-1}_{\mathcal{C}^\circ} + q \big| \|d_1\|_{\mathcal{C}^\circ}^{q-1} - \|d_2\|_{\mathcal{C}^\circ}^{q-1} \big|\\
&\leq & qc \big\|d_1 - d_2\big(\|d_1\|_{\mathcal{C}^\circ}/\|d_2\|_{\mathcal{C}^\circ}\big)\big\|^{q-1}_{\mathcal{C}^\circ} + q \big| \|d_1\|_{\mathcal{C}^\circ}^{q-1} - \|d_2\|_{\mathcal{C}^\circ}^{q-1} \big|.
\end{eqnarray*}
We have for $\lambda_1,\lambda_2>0$ and $r\in]0,1]$ $|\lambda_1^r - \lambda_2^r|\leq |\lambda_1 - \lambda_2|^r$ \citep[Lemma 2.1.]{borwein2009uniformly}. Hence, for $q-1\in]0,1]$, we have $\big| \|d_1\|_{\mathcal{C}^\circ}^{q-1} - \|d_2\|_{\mathcal{C}^\circ}^{q-1} \big| \leq \big| \|d_1\|_{\mathcal{C}^\circ} - \|d_2\|_{\mathcal{C}^\circ}\big|^{q-1}\leq \|d_1 - d_2\|_{\mathcal{C}^\circ}^{q-1}$.
Also, with the triangle inequality $\big\|d_1 - d_2\big(\|d_1\|_{\mathcal{C}^\circ}/\|d_2\|_{\mathcal{C}^\circ}\big)\big\|\leq \|d_1 - d_2\|_{\mathcal{C}^\circ} + \|d_2\|_{\mathcal{C}^\circ} - \|d_1\|_{\mathcal{C}^\circ} \leq 2 \|d_1 - d_2\|_{\mathcal{C}^\circ}$.
Hence
\begin{equation*}
\|\nabla \sigma_{\mathcal{C}}^q(d_1) - \nabla \sigma_{\mathcal{C}}^q(d_2)\|_{\mathcal{C}} \leq q(c2^{q-1} + 1) \|d_1 - d_2\|_{\mathcal{C}^\circ}^{q-1}.
\end{equation*}
Equivalence between \ref{itm:fct_zero_order_smoothness} and \ref{itm:fct_holder_gradient_smoothness} in Definition \ref{def:uniformly_smooth_functions} shows that $\sigma_{\mathcal{C}}^q$ is $(2q^2(c2^{q-1}+1),q)$-uniformly smooth.

\bigskip

\ref{itm:global_support_smoothness} $\implies$ \ref{itm:sphere_support_smoothness}. Let $(d_1,d_2)\in S_{\|\cdot\|_{\mathcal{C}^\circ}}(1)$. 
Since, for $i=1,2$, $\nabla \sigma_{\mathcal{C}}^q(d_i)=q\sigma_{\mathcal{C}}(d_i)^{q-1} \nabla \sigma_{\mathcal{C}}(d_1)= q\sigma_{\mathcal{C}}(d_1)$, we directly have (because of the equivalence between \ref{itm:fct_zero_order_smoothness} and \ref{itm:fct_holder_gradient_smoothness} in Definition \ref{def:uniformly_smooth_functions})
\begin{equation*}\tag*{\qedhere}
\|\nabla \sigma_{\mathcal{C}}(d_1) - \nabla \sigma_{\mathcal{C}}(d_2)\|_{\mathcal{C}} \leq \frac{c}{q} \|d_1 - d_2\|_{\mathcal{C}^\circ}^{1/(p-1)}.
\end{equation*}
\end{proof}

\begin{remark}
From the proof of Theorem \ref{th:UC_set_and_support}, one can obtain quantitative results. 
\ref{itm:ML_def_UC} and \ref{itm:modulus_UC} are equivalent with the same constant. 
\ref{itm:ML_def_UC} with $(\alpha,p)$  implies \ref{itm:scaling_inequality} with $(2\alpha, p)$;
\ref{itm:scaling_inequality} with $(\alpha, p)$ implies \ref{itm:sphere_support_smoothness} with $(1/(2\alpha)^{q-1}, q-1)$;
\ref{itm:global_support_smoothness} with $(c,q)$ implies \ref{itm:modulus_UC} with $(q^{p-1}/(2^{2p-1}pc^{p-1}),p)$; 
\ref{itm:global_gauge_HEB_ball} with $(\alpha, p)$ implies \ref{itm:global_support_smoothness} with $(p^{q-1}/((p-1)qc^{q-1}), q)$;
Conversely, \ref{itm:global_support_smoothness} with $(\alpha, q)$ implies \ref{itm:global_gauge_HEB_ball} with $(q^{p-1}/((q-1)p\alpha^{p-1}), p)$;
Finally, \ref{itm:sphere_support_smoothness} with $(c, q-1)$ implies \ref{itm:global_support_smoothness} with $(2q^2(c2^{q-1}+1))$.
\end{remark}

\section{Equivalence between Local Set and Functional Assumptions}\label{sec:local_equivalence}
In this section, we provide equivalent characterizations of the \textit{local} uniform convexity of $\mathcal{C}$ at $x^*\in\partial\mathcal{C}$.
The results are summarized in Theorem \ref{thm:local_UC_smoothness_gauge}, the analog to Theorem \ref{th:UC_set_and_support}.
We seek to articulate different useful views on the local uniform convexity property of a set.\\
Item \ref{itm:local_UC} is a Banach geometry definition via the local modulus of rotundity.
Item \ref{itm:local_scaling_inequality} is a geometric local \textit{scaling inequality} useful in some algorithm analysis, see for instance the Frank-Wolfe method on locally uniformly convex sets \citep{kerdreux2020uc}.
Note that a natural local version of \eqref{eq:global_scaling}, could be that for any $d\in N_{\mathcal{C}}(x^*)$, for any $x\in\mathcal{C}$, we require
\[
\langle d; x^* - x \rangle \geq \alpha \|d\|_{\mathcal{C}^\circ} \|x^* - x\|^q_{\mathcal{C}}.
\]
However, we opted for a weaker version in~\eqref{eq:local_scaling} which expresses the property only with respect to a single direction in the normal cone at the point of interest.
Finally Items \ref{itm:local_scaling_inequality} and \ref{itm:local_gauge_HEB} connect these geometrical characterization with their functional counterpart, both in term of smoothness and uniform convexity.
These results appear scattered in the literature, see, \textit{e.g.}, \citep[Chapter 3.7]{zalinescu1983uniformly} or \citep[Proposition 3.2.]{aze1995uniformly}.
We expect these various equivalences to provide convergence proof of algorithms in online and offline settings when the decision sets or constraints sets are not globally strongly convex.
We provide an example of such a result in Section \ref{ssec:lojasiewicz}.

\begin{theorem}[Local Set Uniform Convexity]\label{thm:local_UC_smoothness_gauge}
Consider $p\geq 2$ and $q\in]1,2]$ s.t. $\frac{1}{p} + \frac{1}{q}=1$. 
Let $\mathcal{C}$ be a compact strictly convex set centrally symmetric with nonempty interior.
Let $x^*\in\partial\mathcal{C}$, $d_1\in N_{\mathcal{C}}(x^*)\cap S_{\|\cdot\|_{\mathcal{C}^\circ}}(1)$ (note $S_{\|\cdot\|_{\mathcal{C}^\circ}}(1)=\partial\mathcal{C}^\circ$).
The following assertions are equivalent
\begin{enumerate}[label=(\alph*)]
    \item (Modulus of Rotundity) There exists $\alpha>0$ s.t. $\mathcal{C}$ is $(\alpha, p)$-locally uniformly convex at $x^*$ w.r.t. direction $d_1$, \textit{i.e.}, for any $\epsilon\in[0,2]$, we have
    \[
    \nu_{\mathcal{C}}(\epsilon, x^*, d_1) \triangleq \text{inf } \big\{ \langle d_1; x^* - x\rangle~|~ x\in\mathcal{C},~\|x-x^*\|_{\mathcal{C}} \geq \epsilon \big\} \geq \alpha \epsilon^p.
    \]
    \label{itm:local_UC}

    \item (Local scaling inequality) For any $x\in\mathcal{C}$, we have
    \begin{equation}\label{eq:local_scaling}\tag{Local-Scaling}
    \langle d_1 ; x^* - x \rangle \geq \alpha \|x^* - x\|_{\mathcal{C}}^p.
    \end{equation}
    \label{itm:local_scaling_inequality}

    \item (Support Local H\"older-Smooth Sphere) 
    There exists $c>0$ s.t. $\sigma_{\mathcal{C}}(\cdot)$ is $(c, q-1)$-H\"older smooth at $d_1$ on $S_{\|\cdot\|_{\mathcal{C}^\circ}}(1)$ w.r.t. $\|\cdot\|_{\mathcal{C}^\circ}$, \textit{i.e.}, for any $d_2\in S_{\|\cdot\|_{\mathcal{C}^\circ}}(1)$, we have
    \[
        \big\|\nabla \sigma_{\mathcal{C}}(d_1) - \nabla \sigma_{\mathcal{C}}(d_2)\big\|_{\mathcal{C}} \leq c \big\|d_1 - d_2\big\|_{\mathcal{C}^\circ}^{q-1}=\big\|d_1 - d_2\big\|_{\mathcal{C}^\circ}^{1/(p-1)}.
    \]
    \label{itm:local_Holder_smooth}

    \item (Support Local US) There exists $c>0$ s.t. $\sigma_{\mathcal{C}}^q(\cdot)$ is $(\alpha, q)$-uniformly smooth at $d_1$ w.r.t $\|\cdot\|_{\mathcal{C}^\circ}$, \textit{i.e.}, for any $d_2\in\mathbb{R}^m$, we have
    \[
    \sigma_{\mathcal{C}}^q(d_2) \leq \sigma_{\mathcal{C}}^q(d_1) + q \langle x^*; d_2 - d_1\rangle + \frac{\alpha}{q} \|d_2 - d_1\|^q_{\mathcal{C}^\circ},
    \]
    where $\nabla \sigma_{\mathcal{C}}^q(d_1) = q x^*$.
    \label{itm:local_support_US}
    
    \item  (Gauge local UC) There exists $\mu>0$ s.t. $\|\cdot\|^p_{\mathcal{C}}$ is $(\mu, p)$-uniformly convex at $x^*$ on $\mathcal{C}$ in direction $d_1$ w.r.t. $\|\cdot\|_{\mathcal{C}}$, \textit{i.e.}, for any $y\in\mathbb{R}^m$
    \[
    \|y\|^p_{\mathcal{C}} \geq \|x^* \|^p_{\mathcal{C}} + p\langle d_1; y-x^*\rangle + \frac{\mu}{2} \|y-x^*\|_{\mathcal{C}}^p.
    \]
    \label{itm:local_gauge_HEB}
\end{enumerate}
\end{theorem}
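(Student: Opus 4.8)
The plan is to prove Theorem~\ref{thm:local_UC_smoothness_gauge} by establishing a cycle of implications, mirroring the global case in Theorem~\ref{th:UC_set_and_support} but localizing every argument to the fixed pair $(x^*,d_1)$. Concretely, I would show \ref{itm:local_UC} $\iff$ \ref{itm:local_scaling_inequality}, then \ref{itm:local_scaling_inequality} $\implies$ \ref{itm:local_Holder_smooth}, then \ref{itm:local_Holder_smooth} $\iff$ \ref{itm:local_support_US} via the power-of-a-norm trick, and finally \ref{itm:local_support_US} $\iff$ \ref{itm:local_gauge_HEB} using the local Fenchel duality of Proposition~\ref{prop:local_directional_duality}. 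The role of the \eqref{eq:local_lindstrauss_formula} lemma (Lemma~\ref{lem:local_lindstrauss_formula}) is to connect the modulus of rotundity $\nu_{\mathcal{C}}(\cdot,x^*,d_1)$ with the local modulus of smoothness $\rho_{\mathcal{C}^\circ}(\cdot,d_1,x^*)$, which gives an alternative route from \ref{itm:local_UC} directly to \ref{itm:local_support_US} if the direct chain proves awkward at some link.

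For \ref{itm:local_UC} $\iff$ \ref{itm:local_scaling_inequality}: the inequality in \ref{itm:local_scaling_inequality} is exactly the statement that $\langle d_1; x^*-x\rangle \geq \alpha\|x^*-x\|_{\mathcal{C}}^p$ for \emph{all} $x\in\mathcal{C}$, and \ref{itm:local_UC} says the infimum of $\langle d_1;x^*-x\rangle$ over $x$ with $\|x^*-x\|_{\mathcal{C}}\geq\epsilon$ is at least $\alpha\epsilon^p$. One direction is immediate by taking $\epsilon=\|x^*-x\|_{\mathcal{C}}$; the other follows because the infimum defining $\nu_{\mathcal{C}}$ ranges over a superset of the constraint $\|x^*-x\|_{\mathcal{C}}=\epsilon$ and $\epsilon^p$ is increasing, so \ref{itm:local_scaling_inequality} gives the bound on each slice. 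For \ref{itm:local_scaling_inequality} $\implies$ \ref{itm:local_Holder_smooth}: since $\mathcal{C}$ is strictly convex, Lemma~\ref{lem:differentiability_support} gives $\nabla\sigma_{\mathcal{C}}(d_i)=v_{d_i}$ for any nonzero $d_i$, with $v_{d_1}=x^*$. For $d_2\in S_{\|\cdot\|_{\mathcal{C}^\circ}}(1)$, applying \ref{itm:local_scaling_inequality} with $x=v_{d_2}$ gives $\langle d_1;x^*-v_{d_2}\rangle\geq\alpha\|x^*-v_{d_2}\|_{\mathcal{C}}^p$. Unlike the global case I only have the scaling inequality at $d_1$, not at $d_2$, so I cannot symmetrize; instead I bound $\langle d_2; v_{d_2}-x^*\rangle\geq 0$ (optimality of $v_{d_2}$ for $d_2$) and add, getting $\langle d_1-d_2; x^*-v_{d_2}\rangle\geq\alpha\|x^*-v_{d_2}\|_{\mathcal{C}}^p$, then Cauchy--Schwarz with $\|d_1-d_2\|_{\mathcal{C}^\circ}$ yields $\|x^*-v_{d_2}\|_{\mathcal{C}}\leq(1/\alpha)^{1/(p-1)}\|d_1-d_2\|_{\mathcal{C}^\circ}^{1/(p-1)}$, which is \ref{itm:local_Holder_smooth} with $c=\alpha^{-(q-1)}$.

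For \ref{itm:local_Holder_smooth} $\iff$ \ref{itm:local_support_US}: this is the local analog of \ref{itm:sphere_support_smoothness} $\iff$ \ref{itm:global_support_smoothness} in Theorem~\ref{th:UC_set_and_support}, using $\nabla\sigma_{\mathcal{C}}^q(d)=q\sigma_{\mathcal{C}}(d)^{q-1}\nabla\sigma_{\mathcal{C}}(d)$ and the estimate $|\lambda_1^r-\lambda_2^r|\leq|\lambda_1-\lambda_2|^r$ for $r\in]0,1]$; I would localize that computation to comparisons against the fixed $d_1$ rather than general pairs, and use the first-order characterization of uniform smoothness at a point (Definition~\ref{def:uniformly_smooth_functions}\ref{itm:fct_first_order_smoothness}) together with a Hölder-gradient-at-a-point statement. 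Finally \ref{itm:local_support_US} $\iff$ \ref{itm:local_gauge_HEB} is a direct application of Proposition~\ref{prop:local_directional_duality} with $f=\frac{1}{q}\|\cdot\|_{\mathcal{C}^\circ}^q=\frac{1}{q}\sigma_{\mathcal{C}}^q$ and $f^*=\frac{1}{p}\|\cdot\|_{\mathcal{C}}^p$ (Lemma~\ref{lem:power_norm_fenchel}), noting that $d_1\in\partial f^*(x^*)$ translates to $x^*\in\partial f(d_1)$ exactly because $d_1\in N_{\mathcal{C}}(x^*)$; the constants $\mu$ and $\alpha$ are related by the power $1/(q-1)=p-1$ as in that proposition, with the factor $1/2$ in \ref{itm:local_gauge_HEB} absorbing a rescaling. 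The main obstacle I anticipate is the \ref{itm:local_Holder_smooth} $\implies$ \ref{itm:local_support_US} step: extending Hölder continuity of $\nabla\sigma_{\mathcal{C}}$ on the sphere to uniform smoothness of $\sigma_{\mathcal{C}}^q$ off the sphere requires the differentiability of $\sigma_{\mathcal{C}}^q$ at $0$ (using $q>1$) and careful tracking of the normalization $d\mapsto d/\|d\|_{\mathcal{C}^\circ}$, and here the local version is more delicate than the global one because the Hölder bound is only available when one argument is $d_1$, so some of the triangle-inequality splitting used globally must be reorganized.
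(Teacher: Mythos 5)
Your proposed chain is $\ref{itm:local_UC}\iff\ref{itm:local_scaling_inequality}$, then $\ref{itm:local_scaling_inequality}\implies\ref{itm:local_Holder_smooth}$, then $\ref{itm:local_Holder_smooth}\iff\ref{itm:local_support_US}$, then $\ref{itm:local_support_US}\iff\ref{itm:local_gauge_HEB}$. The individual implications you actually sketch match the paper's arguments closely: the scaling-to-H\"older step via $\langle d_2; v_{d_2}-x^*\rangle\geq 0$ plus Cauchy--Schwarz (with constant $\alpha^{-1/(p-1)}$) is exactly the paper's $\ref{itm:local_UC}\implies\ref{itm:local_Holder_smooth}$, and $\ref{itm:local_support_US}\iff\ref{itm:local_gauge_HEB}$ via Proposition~\ref{prop:local_directional_duality} is the paper's closing step. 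The $\ref{itm:local_Holder_smooth}\implies\ref{itm:local_support_US}$ forward direction you anticipate as the hard one is done in the paper by the chain-rule estimate on $\nabla\sigma_{\mathcal C}^q$ plus the mean value theorem, essentially as you outline.

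However, your chain as stated does not close the cycle: you establish $(a)\iff(b)$, $(b)\implies(c)$, $(c)\iff(d)$, $(d)\iff(e)$, which gives $(a),(b)\implies(c),(d),(e)$ but provides no route from any of $(c),(d),(e)$ back to $(a)$ or $(b)$. The missing link is not optional. The paper proves $\ref{itm:local_support_US}\implies\ref{itm:local_UC}$ by bounding the local modulus of smoothness $\rho_{\mathcal C^\circ}(t,d_1,x^*)$ from above using $\ref{itm:local_support_US}$ and the concavity of $(1+x)^{1/q}$, and then dualizing through the local Lindenstrauss formula (Lemma~\ref{lem:local_lindstrauss_formula}) to obtain the lower bound $\nu_{\mathcal C}(\epsilon,x^*,d_1)\geq c'\epsilon^p$. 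There is no direct $\ref{itm:local_support_US}\implies\ref{itm:local_Holder_smooth}$ argument in the paper, and I do not see an elementary one: the global equivalence between the first-order and H\"older-gradient characterizations of smoothness (Definition~\ref{def:uniformly_smooth_functions}) is intrinsically a two-point argument and does not obviously localize to a single anchor point $d_1$. So the Lindenstrauss step is not a fallback ``if the direct chain proves awkward''---it is the essential mechanism by which the set-based items $(a),(b)$ are recovered from the functional items $(c),(d),(e)$, exactly paralleling the global Theorem~\ref{th:UC_set_and_support}, where $\ref{itm:global_support_smoothness}\implies\ref{itm:modulus_UC}$ goes through Proposition~\ref{prop:Duality_Polar_UC_US} (the dual modulus) rather than $\ref{itm:global_support_smoothness}\implies\ref{itm:sphere_support_smoothness}$. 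You should replace the claimed biconditional $\ref{itm:local_Holder_smooth}\iff\ref{itm:local_support_US}$ with the single implication $\ref{itm:local_Holder_smooth}\implies\ref{itm:local_support_US}$ and then explicitly prove $\ref{itm:local_support_US}\implies\ref{itm:local_UC}$ via Lemma~\ref{lem:local_lindstrauss_formula} to close the loop.
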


\begin{proof}[Proof of Theorem \ref{thm:local_UC_smoothness_gauge}]
Because $\mathcal{C}$ is strictly convex, $\sigma_{\mathcal{C}}$ is differentiable on $\mathbb{R}^m\setminus\{0\}$, see Lemma \ref{lem:differentiability_support}. In particular, $\nabla\sigma_{\mathcal{C}}(d_1) = x^*$ since $d_1\in N_{\mathcal{C}}(x^*)$. 
Also, because $\|d_1\|_{\mathcal{C}^\circ}=1$, note that $\nabla\sigma_{\mathcal{C}}^q(d_1)=q\|d_1\|_{\mathcal{C}^\circ}^{q-1} \nabla \sigma_{\mathcal{C}}(d_1)= q x^*$
Finally, note that $\|\cdot\|_{\mathcal{C}}=\sigma_{\mathcal{C}^\circ}$ is not necessarily differentiable (would require assuming that $\mathcal{C}^\circ$ is smooth).


\ref{itm:local_UC} $\iff$ \ref{itm:local_scaling_inequality} is immediate.

\bigskip

\ref{itm:local_UC} $\implies$ \ref{itm:local_Holder_smooth}.
Let us assume that $\mathcal{C}$ is $(\alpha, p)$-uniformly convex at $x^*\in\partial\mathcal{C}$ with respect to $d_1\in S_{\|\cdot\|_{\mathcal{C}^\circ}}(1)\cap N_{\mathcal{C}}(x^*)$, \textit{i.e.}, for any $\epsilon>0$, $\nu_{\mathcal{C}}(\epsilon, x^*, d_1)\geq \alpha \epsilon^p$. 
Hence, we have for any $x\in\mathcal{C}$
\[
\langle d_1 ; x^* - x \rangle \geq \alpha \|x - x^*\|_{\mathcal{C}}^p.
\]
Let $d_2\in S_{\|\cdot\|_{\mathcal{C}^\circ}}(1)$ and $x_2\triangleq \text{argmax}_{x\in\mathcal{C}}\langle x ; d_2 \rangle$ (it is unique because $\mathcal{C}$ is strictly convex compact). 
In particular, $\langle x^*-x_2 ; d_2 \rangle\leq 0$, hence we have
\[
\langle d_1 - d_2; x^* - x_2\rangle \geq \langle d_1 - d_2; x^* - x_2\rangle +  \underbrace{\langle d_2; x^* - x_2\rangle}_{ \leq 0} = \langle d_1; x^* - x_2\rangle \geq \alpha \| x^* - x_2\|_{\mathcal{C}}^p.
\]
Then, with Cauchy-Schwartz we have $\|d_1-d_2\|_{\mathcal{C}^\circ} \|x^* - x_2\|_{\mathcal{C}} \geq \alpha \| x^* - x_2\|_{\mathcal{C}}^p$.
Hence,
\[
\|x_2 - x^*\|_{\mathcal{C}}\leq \frac{1}{\alpha^{1/(p-1)}} \|d_1 - d_2\|_{\mathcal{C}^\circ}^{1/(p-1)}.
\]
With Lemma \ref{lem:differentiability_support}, we have $\nabla \sigma_{\mathcal{C}}(d_2)=x_2$ and $x^*=\nabla\sigma_{\mathcal{C}}(d_1)$, which concludes with $ q-1 = 1/(p-1)$.

\bigskip

\noindent \ref{itm:local_support_US} $\implies$ \ref{itm:local_UC}. 
Let us now assume that $\sigma_{\mathcal{C}}^q(\cdot)$ is $(\alpha, q)$-uniformly smooth at $d_1$ w.r.t $\|\cdot\|_{\mathcal{C}^\circ}$.
Also $\sigma_{\mathcal{C}}(\cdot)=\|\cdot\|_{\mathcal{C}^\circ}$.
Let us first prove an upper bound on the local modulus of smoothness $\rho_{\mathcal{C}^\circ}(t, d_1, x^*)$ of $\mathcal{C}^\circ$ at $d_1$ w.r.t. $x^*$, see \eqref{eq:local_directional_modulus_smoothness}.
By the duality formula \eqref{eq:local_lindstrauss_formula}, we will then obtain a lower bound on the modulus of rotundity.
Recall that the local modulus of smoothness in \eqref{eq:local_directional_modulus_smoothness} is defined for any $t>0$, as
\[
\rho_{\mathcal{C}^\circ}(t, d_1, x^*) = \text{sup }\big\{ \|d_1 + t d_2\|_{\mathcal{C}^\circ} - \|d_1\|_{\mathcal{C}^\circ} - t \langle x^*; d_2\rangle ~ | ~d_2\in\mathcal{C}^\circ \big\}.
\]
By \ref{itm:local_support_US}, 
we have for any $d_2\in\mathbb{R}^m$
\[
\|d_1 + t d_2\|_{\mathcal{C}^\circ}^q \leq \|d_1\|_{\mathcal{C}^\circ}^q + t\langle \nabla \sigma_{\mathcal{C}}^q(d_1); d_2\rangle + \frac{\alpha}{q}t^q\|d_2\|^q_{\mathcal{C}^\circ}.
\]
Recall from the beginning of the proofs that $\nabla \sigma_{\mathcal{C}}^q(d_1)=qx^*$. Then, we have by concavity of $(1+x)^{1/q}$ when $q\in ]1,2]$
\[
\|d_1 + t d_2\|_{\mathcal{C}^\circ} \leq \Big(1 + tq\langle x^*; d_2\rangle + \frac{\alpha}{q}t^q\Big)^q \leq 1 + t\langle x^*; d_2\rangle + \frac{\alpha}{q^2}t^q.
\]
In particular, for $d_2\in\mathcal{C}^\circ$ and because $\|d_1\|_{\mathcal{C}^\circ}=1$, we have $\rho_{\mathcal{C}^\circ}(t, d_1, x^*) \leq \alpha/q^2 t^q$.
Then, with Lemma \ref{lem:local_lindstrauss_formula}, we have that for any $\epsilon\in[0,2]$ and $t>0$
\[
\text{sup}_{\epsilon \in[0,2]}\big\{ \epsilon t - \nu_{\mathcal{C}}(\epsilon, x^*, d_1)\big\}\leq \alpha/q^2 t^{q}.
\]
Hence, for any $\epsilon\in[0,2]$
\[
\nu_{\mathcal{C}}(\epsilon, x^*, d_1) \geq \epsilon t - \alpha/q^2 t^{q}.
\]
Then for $t= (q\epsilon/\alpha)^{1/(q-1)}$, we have
\[
\nu_{\mathcal{C}}(\epsilon, x^*, d_1) \geq \frac{q^{p-2}}{\alpha^{p-1}}(q-1)\epsilon^p.
\]
Therefore, $\mathcal{C}$ is $(\frac{q^{p-2}}{\alpha^{p-1}}(q-1), p)$-locally uniformly convex at $x^*$ with respect to $d_1$.

\bigskip

\noindent \ref{itm:local_Holder_smooth} $\implies$ \ref{itm:local_support_US}. 
The proof is similar to that of \ref{itm:sphere_support_smoothness} $\implies$ \ref{itm:global_gauge_HEB_ball} in Theorem \ref{th:UC_set_and_support}, we repeat it for completeness.
First, by the very same argument, $\sigma_{\mathcal{C}}^q$ is differentiable on $\mathbb{R}^m$ (recall that $\sigma_{\mathcal{C}}$ is not differentiable at $0$).
Now, consider $d_2\in \mathbb{R}^m\setminus\{0\}$ and the unique (because $\mathcal{C}$ is strictly convex) $x_2\in\partial\mathcal{C}$ s.t. $d_2\in N_{\mathcal{C}}(x_2)$. 
Then, with Lemma \ref{lem:differentiability_support}, we have $\nabla \sigma_{\mathcal{C}}(d_2)=x_2$ and with the same argument $\nabla \sigma_{\mathcal{C}}(d_2/\|d_2\|_{\mathcal{C}^\circ})=x_2$. 
Because $\sigma_{\mathcal{C}}$ is H\"older smooth at $d_1$ on $S_{\|\cdot\|_{\mathcal{C}^\circ}}$, we have $\|\nabla \sigma_{\mathcal{C}}(d_1) - \nabla \sigma_{\mathcal{C}}(d_2)\|_{\mathcal{C}} \leq c\| d_1 - d_2/\|d_2\|_{\mathcal{C}^\circ}\|_{\mathcal{C}^\circ}^{1/(q-1)}$.
We then obtain, by adding and subtracting $q\sigma_{\mathcal{C}}^{q-1}(d_1)\nabla\sigma_{\mathcal{C}}(d_2)$ and applying the triangle inequality
\begin{eqnarray*}
\|\nabla \sigma_{\mathcal{C}}^q(d_1) - \nabla \sigma_{\mathcal{C}}^q(d_2)\|_{\mathcal{C}} & = & \| q \sigma_{\mathcal{C}}^{q-1}(d_1) \nabla \sigma_{\mathcal{C}}(d_1) - q \sigma_{\mathcal{C}}^{q-1}(d_2) \nabla \sigma_{\mathcal{C}}(d_2)\|_{\mathcal{C}}\\
&\leq & q \sigma_{\mathcal{C}}^{q-1}(d_1) \big\| \nabla \sigma_{\mathcal{C}}(d_1) - \nabla \sigma_{\mathcal{C}}(d_2)\big\|_{\mathcal{C}} + q \big\| \nabla \sigma_{\mathcal{C}}(d_2) \big\|_{\mathcal{C}} \big|\sigma_{\mathcal{C}}^{q-1}(d_1) - \sigma_{\mathcal{C}}^{q-1}(d_2) \big|\\
&\leq & qc\|d_1\|_{\mathcal{C}^\circ}^{q-1} \big\|d_1/\|d_1\|_{\mathcal{C}^\circ} - d_2/\|d_2\|_{\mathcal{C}^\circ}\big\|^{q-1}_{\mathcal{C}^\circ} + q \big| \|d_1\|_{\mathcal{C}^\circ}^{q-1} - \|d_2\|_{\mathcal{C}^\circ}^{q-1} \big|\\
&\leq & qc \big\|d_1 - d_2\big(\|d_1\|_{\mathcal{C}^\circ}/\|d_2\|_{\mathcal{C}^\circ}\big)\big\|^{q-1}_{\mathcal{C}^\circ} + q \big| \|d_1\|_{\mathcal{C}^\circ}^{q-1} - \|d_2\|_{\mathcal{C}^\circ}^{q-1} \big|.
\end{eqnarray*}
We have for $\lambda_1,\lambda_2>0$ and $r\in]0,1]$ $|\lambda_1^r - \lambda_2^r|\leq |\lambda_1 - \lambda_2|^r$.
Hence, for $q-1\in]0,1]$, we have $\big| \|d_1\|_{\mathcal{C}^\circ}^{q-1} - \|d_2\|_{\mathcal{C}^\circ}^{q-1} \big| \leq \big| \|d_1\|_{\mathcal{C}^\circ} - \|d_2\|_{\mathcal{C}^\circ}\big|^{q-1}\leq \|d_1 - d_2\|_{\mathcal{C}^\circ}^{q-1}$.
Also, by the triangle inequality, $\big\|d_1 - d_2\big(\|d_1\|_{\mathcal{C}^\circ}/\|d_2\|_{\mathcal{C}^\circ}\big)\big\|\leq \|d_1 - d_2\|_{\mathcal{C}^\circ} + \|d_2\|_{\mathcal{C}^\circ} - \|d_1\|_{\mathcal{C}^\circ} \leq 2 \|d_1 - d_2\|_{\mathcal{C}^\circ}$.
Hence for any $d_2\in\mathbb{R}^m\setminus\{0\}$
\begin{equation*}
\|\nabla \sigma_{\mathcal{C}}^q(d_1) - \nabla \sigma_{\mathcal{C}}^q(d_2)\|_{\mathcal{C}} \leq q(c2^{q-1} + 1) \|d_1 - d_2\|_{\mathcal{C}^\circ}^{q-1}.
\end{equation*}
Let us now prove that this implies a first-order type definition of local smoothness.
For any $d_2$, by the mean value theorem, there exists $\lambda\in]0,1[$ such that
\begin{eqnarray*}
\sigma_{\mathcal{C}}^q(d_2) - \sigma_{\mathcal{C}}^q(d_1) &=& \langle \nabla \sigma_{\mathcal{C}}^q(\lambda d_1 + (1-\lambda) d_2); d_2 - d_1\rangle\\
&=& \langle \nabla\sigma^q_{\mathcal{C}}(d_1); d_2 - d_1\rangle + \langle \nabla \sigma_{\mathcal{C}}^q(\lambda d_1 + (1-\lambda) d_2) - \nabla\sigma^q_{\mathcal{C}}(d_1); d_1 - d_2\rangle\\
&\leq& \langle \nabla\sigma^q_{\mathcal{C}}(d_1); d_2 - d_1\rangle + \| \nabla \sigma_{\mathcal{C}}^q(\lambda d_1 + (1-\lambda) d_2) - \nabla\sigma^q_{\mathcal{C}}(d_1)\|_{\mathcal{C}} \| d_1 - d_2\|_{\mathcal{C}^\circ}\\
&\leq& \langle \nabla\sigma^q_{\mathcal{C}}(d_1); d_2 - d_1\rangle + q(c2^{q-1}+1) \| d_1 - d_2\|_{\mathcal{C}^\circ}^{q}.
\end{eqnarray*}
Hence $\sigma_{\mathcal{C}}^q$ is $(q(c2^{q-1}+1),q)$-uniformly convex at $d_1$ w.r.t. $\|\cdot\|_{\mathcal{C}^\circ}$.

\bigskip

Equivalence between \ref{itm:local_support_US} and \ref{itm:local_gauge_HEB} stems from Proposition \ref{prop:local_directional_duality}. 
Indeed, from Lemma \ref{lem:power_norm_fenchel} we have that $(\frac{1}{q}\sigma_{\mathcal{C}}^q)^\star(\cdot) = \frac{1}{p}\|\cdot\|_{\mathcal{C}}^p$.
Then, because $(x^*, d_1)\in \partial\mathcal{C}\times N_{\mathcal{C}}(x^*)\cap\partial\mathcal{C}^\circ$, we have $(x^*, d_1)\in\partial \frac{1}{q}\sigma_{\mathcal{C}}^q(d_1)\times\partial \frac{1}{p}\|\cdot\|_{\mathcal{C}}^p(x^*)$ and we can indeed apply Proposition \ref{prop:local_directional_duality}.
\end{proof}

\section{Applications}\label{sec:applications}

Theorems \ref{th:UC_set_and_support} and \ref{thm:local_UC_smoothness_gauge} offer different points of view on uniform convexity properties which yield improved rates in optimization or learning. We now detail three situations where the equivalence relationships detailed above lead to new results.

In Section \ref{ssec:lojasiewicz}, we show that the $\ell_p$ balls with $p>2$ are locally strongly convex on some points of their boundaries, while not being globally strongly convex. This leads to novel linear convergence  results for vanilla Frank-Wolfe algorithm on some curved sets that are not strongly convex.

In Section \ref{ssec:kakade}, we leverage a result on the geometry of Banach spaces, showing the inclusion of uniformly convex spaces into Rademacher spaces of type $q$. The equivalence between the UC of norms balls and space UC then implies generalization bounds on low norm linear predictors.

In Section \ref{ssec:PAFW}, we show how the Primal Averaging Frank-Wolfe algorithm \citep[Algorithm 4]{lan2013complexity} exhibits accelerated sublinear rates w.r.t. the $\mathcal{O}(1/T)$ baseline when the constraint set is uniformly convex and $\text{inf}_{x\in\mathcal{C}}\|\nabla f(x)\|> c >0$. 
The sublinear rates are slower than those of Frank-Wolfe with exact line-search or short-steps on uniformly convex sets but are obtained with (cheaper) pre-determined function agnostic step-sizes, and in fact oblivious of any structure of the problem.
To our knowledge, this is the only version of Frank-Wolfe achieving accelerated convergence w.r.t. $\mathcal{O}(1/T)$ with such agnostic step-sizes.

\subsection{Linear Convergence Rates for Vanilla Frank-Wolfe on Non-Strongly Convex Sets}\label{ssec:lojasiewicz}

Here, we apply Theorem \ref{thm:local_UC_smoothness_gauge} to derive accelerated convergence rates of algorithms solving the following constrained optimization problem
\begin{equation}\label{eq:opt}\tag{OPT}
\underset{x\in\mathcal{C}}{\text{minimize }} f(x),
\end{equation}
where $f$ is smooth convex function and $\mathcal{C}$ a compact convex set.
Write $x^*$ a solution of \eqref{eq:opt}.
\cite{kerdreux2020uc} shows that when a \textit{local scaling inequality} holds at $x^*$ with $p\geq 2$, $\alpha>0$, \textit{i.e.}, for any $x\in\mathcal{C}$
\begin{equation}\label{eq:local_scaling_optimization}
\langle - \nabla f(x^*); x^* -x \rangle \geq \alpha \|\nabla f(x^*)\|_\star\|x^* - x\|^p,
\end{equation}
then the vanilla Frank-Wolfe algorithm has an accelerated convergence rate compared to $\mathcal{O}(1/T)$. By optimality, $-\nabla f(x^*)\in N_{\mathcal{C}}(x^*)$, and \eqref{eq:local_scaling_optimization} is ensured when \ref{itm:local_scaling_inequality} in Theorem \ref{thm:local_UC_smoothness_gauge} holds.
While the local scaling inequalities are key to the convergence analyses, they are harder to check than the other equivalent conditions in Theorem \ref{thm:local_UC_smoothness_gauge}. In the following lemma, we show that although $\ell_p$ balls are not strongly convex when $p>2$, there are locally strongly convex (\textit{i.e.} $(\alpha,2)$-locally uniformly convex) at any $x^*\in\partial\ell_p(1)$ s.t. $\langle x^*;e_i\rangle\neq 0$ for all $i$, which means improved convergence rates in this subset of points. 

\begin{lemma}[Local Strong Convexity of the $\ell_p$ with $p > 2$]\label{lem:local_strong_convexity_lq}
Consider $p>2$ and $x=\sum_{i=1}^{m}\lambda_i e_i \in\partial\ell_p(1)$ s.t. $\lambda_i\neq 0$ for all $i\in[m]$. 
Then, there exists $\alpha>0$ s.t. $\ell_p(1)$ is $(\alpha, 2)$-locally uniformly convex at $x$.
\end{lemma}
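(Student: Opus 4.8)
The plan is to verify item \ref{itm:local_scaling_inequality} of Theorem \ref{thm:local_UC_smoothness_gauge}, namely the \eqref{eq:local_scaling} inequality with exponent $2$; by that theorem this is equivalent to the $(\alpha,2)$-local uniform convexity (item \ref{itm:local_UC}) we are after. Every object involved is invariant under the coordinate reflections $y_i\mapsto -y_i$, which are linear isometries of $\ell_p$, so we may assume $\lambda_i>0$ for all $i$. Since $\ell_p(1)$ is smooth for $p>1$, the normal cone $N_{\ell_p(1)}(x)$ is a ray; writing $p'=p/(p-1)$ for the conjugate exponent (so $\|\cdot\|_{\mathcal{C}^\circ}=\|\cdot\|_{p'}$) its generator with $\|\cdot\|_{p'}=1$ is $d\triangleq\sum_i\lambda_i^{p-1}e_i$. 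Indeed $(p-1)p'=p$ gives $\|d\|_{p'}^{p'}=\sum_i\lambda_i^{p}=1$, while H\"older's inequality gives $\langle d;y\rangle\le\|d\|_{p'}\|y\|_p\le 1=\langle d;x\rangle$ for every $y\in\ell_p(1)$, so $d\in N_{\ell_p(1)}(x)\cap S_{\|\cdot\|_{\mathcal{C}^\circ}}(1)$. It remains to produce $\alpha>0$ with $\langle d;x-y\rangle\ge\alpha\|x-y\|_p^2$ for all $y\in\ell_p(1)$.

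Set $y=x+h$ and expand coordinatewise: $\|y\|_p^p=\sum_i|\lambda_i+h_i|^p=1+p\langle d;h\rangle+\sum_i\psi_i(h_i)$, where $\psi_i(t)\triangleq|\lambda_i+t|^p-\lambda_i^p-p\lambda_i^{p-1}t$ is the Bregman divergence of $s\mapsto|s|^p$ based at $\lambda_i$. Since $y\in\ell_p(1)$ forces $\|y\|_p^p\le1$, this gives $\langle d;x-y\rangle=-\langle d;h\rangle\ge\tfrac1p\sum_i\psi_i(h_i)$, so it suffices to bound $\sum_i\psi_i(h_i)$ below by a fixed multiple of $\|h\|_p^2$.

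The crux is the one-dimensional estimate: for each $i$ there is $c_i>0$ with $\psi_i(t)\ge c_it^2$ for all $|t|\le 2$, which is the relevant range because $y\in\ell_p(1)$ implies $|h_i|=|y_i-\lambda_i|\le 1+\lambda_i\le2$. This is where $p>2$ and $\lambda_i\ne0$ are used: since $\lambda_i>0$, $s\mapsto|s|^p$ is smooth near $s=\lambda_i$, so $\psi_i(t)=\tfrac12p(p-1)\lambda_i^{p-2}t^2+o(t^2)$ with leading coefficient $\tfrac12p(p-1)\lambda_i^{p-2}>0$, while strict convexity of $s\mapsto|s|^p$ ($p>1$) makes $\psi_i(t)>0$ for every $t\ne0$; hence $t\mapsto\psi_i(t)/t^2$, extended by $\tfrac12p(p-1)\lambda_i^{p-2}$ at $t=0$, is continuous and strictly positive on the compact interval $[-2,2]$, so bounded below there by some $c_i>0$. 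Summing and using $\|h\|_p\le\|h\|_2$ for $p\ge2$ yields $\sum_i\psi_i(h_i)\ge(\min_ic_i)\|h\|_2^2\ge(\min_ic_i)\|h\|_p^2$, so the \eqref{eq:local_scaling} inequality holds with $\alpha=\tfrac1p\min_ic_i$.

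The main obstacle is exactly this uniform quadratic lower bound on the $\psi_i$. The flatness of $s\mapsto|s|^p$ at the origin when $p>2$ means $\psi_i''$ degenerates precisely when the coordinate $\lambda_i+t$ crosses $0$, i.e.\ at $t=-\lambda_i$; the hypothesis $\lambda_i\ne0$ keeps this degeneracy at a fixed positive distance from $t=0$, where $\psi_i$ is already strictly positive, so that $\psi_i(t)/t^2$ cannot collapse to $0$. This is also why the argument — and the conclusion — fail at a boundary point having a vanishing coordinate.
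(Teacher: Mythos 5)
Your proof is correct, and it takes a genuinely different route than the paper's. The paper verifies item \ref{itm:local_gauge_HEB} of Theorem~\ref{thm:local_UC_smoothness_gauge}: it computes the full Hessian of $\|\cdot\|_p^2$ at $x$, shows $\det H_f(x) = 2^m(p-1)^{m-1}\prod_i\lambda_i^{p-2} > 0$ (and hence $H_f(x)\succ 0$, using convexity of $f$ so that $H_f(x)\succeq 0$ a priori), and then appeals to compactness of $\mathcal{C}$ to extract a uniform quadratic lower bound — while acknowledging, without detail, that item \ref{itm:local_gauge_HEB} formally requires the bound on all of $\mathbb{R}^m$ rather than just on $\mathcal{C}$. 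You instead verify item \ref{itm:local_scaling_inequality} directly, decomposing $\|x+h\|_p^p - 1 - p\langle d;h\rangle$ into the one-dimensional Bregman divergences $\psi_i(h_i)$ of $s\mapsto|s|^p$ and obtaining a uniform quadratic lower bound on each $\psi_i$ over $[-2,2]$ by a continuity--compactness argument anchored at the positive Taylor coefficient $\tfrac12 p(p-1)\lambda_i^{p-2}$. Your route is more elementary (no matrix determinant identity, no Sylvester-type positivity argument), sidesteps the domain issue the paper had to paper over, and yields a constant that is more explicitly traceable to the nonvanishing coordinates; the paper's Hessian calculation, in exchange, exposes the precise second-order geometry at $x$ and makes the degeneracy at a vanishing coordinate visible through the factor $\prod_i\lambda_i^{p-2}$. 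Both arguments hinge on the same phenomenon you correctly isolate at the end: the flatness of $|s|^p$ at $s=0$ is kept at a fixed distance from the point of interest precisely because $\lambda_i\neq 0$.
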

\begin{proof}[Proof of Lemma \ref{lem:local_strong_convexity_lq}]
Let us write $\|\cdot\|_p$ the $\ell_p$ norm. 
With Theorem \ref{thm:local_UC_smoothness_gauge} \ref{itm:local_gauge_HEB}, we need to prove that $f(\cdot)\triangleq\|\cdot\|_p^2$ is $(\alpha, 2)$-uniformly convex at $x=\sum_{i=1}^{m}\lambda_i e_i \in\partial\ell_p(1)$ s.t. $\lambda_i\neq 0$ for all $i\in[m]$. Note that Item \ref{itm:local_gauge_HEB} of Theorem \ref{thm:local_UC_smoothness_gauge} requires a quadratic lower bound on $\mathbb{R}^m$. Here, we only prove it on a compact domain. 
However, equivalence with Item \ref{itm:local_scaling_inequality} of Theorem \ref{thm:local_UC_smoothness_gauge} is also valid with such a restriction. We omit the proof.
Without loss of generality, by central symmetry of $\ell_p$, let us assume that all $\lambda_i>0$. Note then that $\sum_{i}{\lambda_i^p}=1$.
$f$ is convex and twice differentiable at $x$. Let us first prove that the Hessian $H_f(x)$ has no zero eigenvalues. We have
\begin{equation*}
\left\{
    \begin{split}
    &\frac{\partial^2 f}{\partial x_{i_0}^2}(x) =  2(p-1) \lambda_{i_0}^{p-2} + 2(2-p) \lambda_{i_0}^{2p-2}\\
    &\frac{\partial^2 f}{\partial x_{i_0}\partial x_{j_0}}(x) = 2(2-p) (\lambda_{i_0}\lambda_{j_0})^{p-1}.
    \end{split}
   \right.
\end{equation*}
Hence, the Hessian of $f$ at $x$ is of the form
\[
H_f(x) = 2(p-1)\text{diag}(\lambda_1^{p-2}, \ldots, \lambda_m^{p-2}) + \Big( 2(2-p) (\lambda_{i}\lambda_{j})^{p-1} \Big)_{1\leq i,j \leq m}.
\]
Write $\Lambda =(\lambda_i)_{i=1,\ldots,m}$, we have that
\[
H_f(x) = 2(2-p)\Big[\frac{p-1}{2-p}\text{diag}(\Lambda^{p-2}) + (\Lambda^{p-1})^T \Lambda^{p-1}\Big].
\]
Then, note that for an invertible diagonal matrix $D=\text{diag}(d_1,\ldots,d_m)$ and vector $h=(h_1,\ldots,h_m)$, we have 
\[
\text{det}\big(D + h^T h\big) = \text{det}\big(D\big) \text{det}\big(I_m + D^{-1}h^T h\big) = \Big( 1 + \sum_{i=1}^{m}{\frac{h_i^2}{d_i}}\Big) \prod_{i=1}^{m}{d_i}.
\]
We then have
\begin{eqnarray*}
\text{det}\big(H_f(x)\big) & = & (2(2-p))^m\Big(\frac{p-1}{2-p}\Big)^m \Big( 1 + \frac{2-p}{p-1} \sum_{i=1}^{m}{\lambda_i^{2(p-1)}/\lambda_i^{p-2}}\Big) \prod_{i=1}^{m}{\lambda_i^{p-2}}\\
\text{det}\big(H_f(x)\big) & = & \big[2(p-1)\big]^m \Big[1 + \frac{2-p}{p-1} \sum_{i=1}^{m}{\lambda_i^p}\Big] \prod_{i=1}^{m}{\lambda_i^{p-2}} = 2^{m}\big(p-1\big)^{m-1} \prod_{i=1}^{m}{\lambda_i^{p-2}}>0,
\end{eqnarray*}
so that $H_f(x)\succ 0$. This ensures that on the compact domain $\mathcal{C}$, there exists a value $\mu>0$ s.t. for any $y\in\mathcal{C}$
\[
\|y\|^2_{\mathcal{C}} \geq \|x \|^p_{\mathcal{C}} + \langle \nabla \|\cdot\|^2_{\mathcal{C}}(x); y-x\rangle + \frac{\mu}{2} \|y-x^*\|_{\mathcal{C}}^p.
\]
This corresponds to  
Theorem \ref{thm:local_UC_smoothness_gauge} \ref{itm:local_gauge_HEB}.
\end{proof}

When $p>2$, the $\ell_p$ balls are not globally strongly convex. 
However, Lemma \ref{lem:local_strong_convexity_lq} shows that they are locally strongly convex on any boundary point which has no zero coordinates in the canonical basis.
In the following corollary, we show that this proves linear convergence rates of the vanilla Frank-Wolfe algorithm on $\ell_p$ balls (with $p\geq 2$) as analysed in \citep{kerdreux2020uc}.

\begin{corollary}[Linear Rates for FW on $\ell_p$ for $p>2$]\label{cor:accelerated_cv_FW}
Consider a convex smooth function $f$ such that $\text{inf}_{x\in\mathcal{C}}\|\nabla f(x)\|>c>0$ and $\mathcal{C}=\ell_p(1)$.
Assume the solution $x^*$ of \eqref{eq:opt} has no zero coordinates in the canonical basis, then the Frank-Wolfe algorithm with exact line-search or short step size converges linearly.
\end{corollary}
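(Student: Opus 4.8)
The plan is to chain Lemma~\ref{lem:local_strong_convexity_lq}, the equivalences in Theorem~\ref{thm:local_UC_smoothness_gauge}, and the convergence analysis of~\cite{kerdreux2020uc}. Recall that for $\mathcal{C}=\ell_p(1)$ we have $\|\cdot\|_{\mathcal{C}}=\|\cdot\|_p$ and $\|\cdot\|_{\mathcal{C}^\circ}=\|\cdot\|_q$, and that $\ell_p$ with $p>2$ is strictly convex and smooth, so $N_{\mathcal{C}}(x^*)$ is a single ray and $N_{\mathcal{C}}(x^*)\cap S_{\|\cdot\|_{\mathcal{C}^\circ}}(1)$ a single point. (For $p=2$ the ball is globally strongly convex and linear convergence is classical, so we focus on $p>2$.)

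First I would record the consequence of optimality: since $f$ is convex and differentiable and $x^*$ solves \eqref{eq:opt}, we have $-\nabla f(x^*)\in N_{\mathcal{C}}(x^*)$; moreover the standing assumption $\inf_{x\in\mathcal{C}}\|\nabla f(x)\|_{\mathcal{C}^\circ}>c>0$ forces $\nabla f(x^*)\neq 0$, so that
\[
d_1 \triangleq -\nabla f(x^*)/\|\nabla f(x^*)\|_{\mathcal{C}^\circ}
\]
is a well-defined element of $N_{\mathcal{C}}(x^*)\cap S_{\|\cdot\|_{\mathcal{C}^\circ}}(1)$. Next, since $x^*$ has no zero coordinate in the canonical basis, Lemma~\ref{lem:local_strong_convexity_lq} provides $\alpha>0$ such that $\ell_p(1)$ is $(\alpha,2)$-locally uniformly convex at $x^*$, i.e. item~\ref{itm:local_UC} of Theorem~\ref{thm:local_UC_smoothness_gauge} holds with the exponent equal to $2$ and with the (unique) normal direction $d_1$ above. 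By the equivalence \ref{itm:local_UC}~$\iff$~\ref{itm:local_scaling_inequality}, the local scaling inequality
\[
\langle d_1;\, x^* - x\rangle \;\geq\; \alpha\,\|x^*-x\|_p^{\,2}
\]
holds for all $x\in\mathcal{C}$. Multiplying through by $\|\nabla f(x^*)\|_{\mathcal{C}^\circ}>c$ recovers exactly \eqref{eq:local_scaling_optimization} with exponent $2$:
\[
\langle -\nabla f(x^*);\, x^* - x\rangle \;\geq\; \alpha\,\|\nabla f(x^*)\|_{\mathcal{C}^\circ}\,\|x^*-x\|_p^{\,2} \;\geq\; \alpha c\,\|x^*-x\|_p^{\,2}.
\]

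Finally I would invoke the convergence theorem of~\cite{kerdreux2020uc}: when a local scaling inequality of the form \eqref{eq:local_scaling_optimization} holds at the minimizer with exponent $p\geq 2$, Frank--Wolfe with exact line-search or with the short-step rule enjoys an accelerated rate, which in the borderline case of exponent $2$ (local strong convexity) is \emph{linear}. The uniform lower bound $c$ on $\|\nabla f\|_{\mathcal{C}^\circ}$ over $\mathcal{C}$ is precisely what makes this argument go through: it rules out the degenerate situation where the unconstrained optimum lies in the interior of $\mathcal{C}$ and turns the per-step progress estimate into a genuine geometric contraction. Combining with the displayed inequality above finishes the proof.

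\textbf{Main obstacle.} The substantive work is bookkeeping rather than a new idea. One must check that the quantitative hypotheses of the \cite{kerdreux2020uc} analysis are met — that the scaling inequality is only needed at $x^*$, that the constants $\alpha c$ and the smoothness constant of $f$ combine into a bona fide linear (not merely accelerated sublinear) rate in the exponent-$2$ regime, and that the primal/dual norms ($\ell_p$ and $\ell_q$) are matched consistently with \eqref{eq:local_scaling_optimization}. A minor subtlety worth flagging: Lemma~\ref{lem:local_strong_convexity_lq} only establishes item~\ref{itm:local_gauge_HEB} of Theorem~\ref{thm:local_UC_smoothness_gauge} on the compact set $\mathcal{C}$ rather than on all of $\mathbb{R}^m$; as noted in its proof, the equivalence with the local scaling inequality~\ref{itm:local_scaling_inequality} survives this restriction because that inequality is itself quantified over $x\in\mathcal{C}$, so no strengthening is required.
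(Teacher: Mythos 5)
Your proposal is correct and follows essentially the same route as the paper: the paper's own proof is the one-liner ``We use Lemma~\ref{lem:local_strong_convexity_lq} with \citep[Theorem 2.5]{kerdreux2020uc},'' relying on the surrounding text (which already identifies $-\nabla f(x^*)\in N_{\mathcal{C}}(x^*)$, rewrites \ref{itm:local_scaling_inequality} of Theorem~\ref{thm:local_UC_smoothness_gauge} as~\eqref{eq:local_scaling_optimization}, and invokes the lower bound on $\|\nabla f\|$). Your writeup simply makes that chain explicit, including the correct observation that the restriction to $\mathcal{C}$ in Lemma~\ref{lem:local_strong_convexity_lq} is harmless because the scaling inequality itself is only quantified over $x\in\mathcal{C}$.
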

\begin{proof}[Proof of Corollary \ref{cor:accelerated_cv_FW}]
We use Lemma \ref{lem:local_strong_convexity_lq} with \citep[Theorem 2.5]{kerdreux2020uc}.
\end{proof}

\subsection{Uniform Smoothness, Rademacher type, and Generalization Bounds}\label{ssec:kakade}
Here, we show an example where the equivalence between the uniform convexity of the gauge and the Banach space's uniform convexity provides another perspective on a generalization bound for low-norm linear predictors \citep[Theorem 1]{kakade2009complexity} with strongly convex norm balls. We also generalize it to uniformly convex regularizing balls.

Consider a hypothesis class $\mathcal{F}$ of functions $f:\mathcal{X}\rightarrow\mathbb{R}$ and $n$ points $(x_i)\in\mathcal{X}\subset\mathbb{R}^m$, sampled from a distribution $\mu$ on $\mathcal{X}$.
For $(\epsilon_i)$ a sequence of i.i.d.~Bernouilli random variable, the Rademacher constant is defined as 
\begin{equation}\label{eq:rademacher_constant}\tag{Rademacher constant}
R_n(\mathcal{F}) \triangleq \mathbb{E}_{(\epsilon_i),(x_i)}\Big[\underset{f\in\mathcal{F}}{\text{sup }}\Big|\frac{1}{n} \sum_{i=1}^{n}{f(x_i)\epsilon_i} \Big|\Big].
\end{equation}
This Rademacher constant is a measure of the hypothesis class complexity, and a key quantity appearing in bounds on generalization error \citep{koltchinskii2001rademacher,bartlett2002model,bartlett2002rademacher,bartlett2005local}. In Theorem \ref{th:uniform_convexity_generalization}, we obtain upper bounds on the Rademacher constants of low-norm linear predictors in finite-dimensional spaces.
Such hypothesis classes are of the form
$\mathcal{F}_{\mathcal{C}} = \big\{f:x\in\mathcal{X}\rightarrow\langle x; w \rangle ~|~\|w\|_{\mathcal{C}}\leq 1\big\}$, where $\mathcal{C}$ is a compact convex centrally symmetric set with non-empty interior.

Besides uniform convexity or smoothness, various properties have been designed to further classify Banach spaces. For instance, the definitions \citep[Definition 5.8.]{deville1993smoothness} of Rademacher space of type $q\in[1,2]$ or cotype $p\in[2;+\infty[$ involve quantities very similar to the \ref{eq:rademacher_constant}.
Note that Rademacher of type $q$ and cotype are dual properties \citep[Proposition 1.e.17]{lindenstrauss2013classical}.

\begin{definition}[Space of Rademacher type and cotype]
A space $(\mathbb{R}^m, \|\cdot\|)$ is Rademacher of type $q\in [1,2]$ if for each finite sequence $(\epsilon_i)_{i=1}^n$ of i.i.d. Bernouilli variable and
any fixed finite sequence $(f_i)$ of elements of  $\mathbb{R}^m$, it holds that
\begin{equation}\label{eq:prop_rademacher_type_t}\tag{type $q$}
\mathbb{E}_{(\epsilon_i)} \Big(\big\| \sum_{i=1}^n \epsilon_i f_i \big\|^q\Big) \leq C \cdot \sum_{i=1}^n \|f_i\|^q.
\end{equation}
It is of cotype $q\in[2,+\infty[$ if there exists $C>0$ such that
\begin{equation}\label{eq:prop_rademacher_cotype_t}\tag{cotype $p$}
\sum_{i=1}^{n} \|f_i\|^p \leq C\cdot \mathbb{E}_{(\epsilon_i)}\Big(\Big\| \sum_{i=1}^{n}{\epsilon_i f_i}\Big\|^p\Big).
\end{equation}
\end{definition}

The Rademacher type of Banach spaces was leveraged in a variety of results in machine learning. 
For instance, for some class of low norm linear predictors, \cite{liu2017algorithmic} connect the duality between type and cotype (of the norm defining the hypothesis class) to the duality between stable (as they define it) learning and generalization bounds of the corresponding problem.

Slightly generalizing the Rademacher type, the martingale type/cotype of Banach spaces have been extensively studied in online learning. 
A series of works have shown the equivalence between optimal regret bounds and the martingale type of the space associated to the decision set \citep{srebro2011universality,rakhlin2017equivalence}.
Such links are not surprising as connections between martingale properties, the study of Banach spaces and concentration inequalities have long been known \citep{pisier1975martingales,pinelis1994optimum}, see \citep{pisier2011martingales,boucheron2013concentration} for recent references.

Uniform convexity is often invoked along with the martingale/Rademacher type property \citep[Section 6]{srebro2011universality}. 
Indeed, a uniformly smooth space of type $q\in]1,2]$ is also a Rademacher Banach space of type $q$ \citep[Lemma 5.9.]{deville1993smoothness}, while the converse is not true \citep{james1978nonreflexive}.
We recall a self-contained proof of that result \citep[Theorem 1.e.16]{lindenstrauss2013classical} for finite-dimensional spaces. 

\begin{proposition}[Uniformly Smooth and Rademacher Spaces]\label{prop:uniform_smoothness_rademacher}
Let $q\in]1,2]$. 
A normed space $(\mathbb{R}^m, \|\cdot\|)$ that is $(\alpha, q)$-uniformly smooth is also Rademacher of type $q$.
\end{proposition}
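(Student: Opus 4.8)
The plan is to promote the hypothesis to a statement about the convex function $\|\cdot\|^q$ by way of the machinery of Section~\ref{sec:global_equivalence}, extract from it a parallelogram-type estimate, and then run a one-line martingale induction on the partial sums. Throughout, write $\mathcal{C}=B_{\|\cdot\|}$, which is a centrally symmetric compact convex body with nonempty interior, so that Theorem~\ref{th:UC_set_and_support} and Proposition~\ref{prop:Duality_Polar_UC_US} apply to it.

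First I would reduce to function smoothness. By Definition~\ref{def:set_US}, the space $(\mathbb{R}^m,\|\cdot\|)$ being $(\alpha,q)$-uniformly smooth means exactly that $\mathcal{C}$ is $(\alpha,q)$-uniformly smooth, so Proposition~\ref{prop:Duality_Polar_UC_US}\ref{itm:US_Polar_UC} produces a $\beta>0$ for which $\mathcal{C}^\circ$ (again a centrally symmetric compact convex body with nonempty interior) is $(\beta,p)$-uniformly convex. Applying the equivalence \ref{itm:modulus_UC}$\iff$\ref{itm:global_support_smoothness} of Theorem~\ref{th:UC_set_and_support} to the set $\mathcal{C}^\circ$ --- whose support function is $\sigma_{\mathcal{C}^\circ}=\|\cdot\|_{\mathcal{C}^{\circ\circ}}=\|\cdot\|$ by the bipolar theorem --- then yields a constant $c>0$ such that $\|\cdot\|^q$ is $(c,q)$-uniformly smooth as a convex function with respect to $\|\cdot\|$ in the zero-order sense of Definition~\ref{def:uniformly_smooth_functions}\ref{itm:fct_zero_order_smoothness}. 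Specializing that zero-order inequality to $\lambda=\tfrac12$ with $x=u+w$ and $y=u-w$, so that the midpoint is $u$ and $\|x-y\|=2\|w\|$, produces the estimate
\[ \frac{\|u+w\|^q+\|u-w\|^q}{2}\ \leq\ \|u\|^q+\frac{c\,2^{q-2}}{q}\,\|w\|^q \qquad\text{for all } u,w\in\mathbb{R}^m. \]

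Next comes the induction. Let $(\epsilon_i)_{i=1}^n$ be i.i.d.\ Bernoulli (Rademacher) signs, set $S_0=0$ and $S_k=S_{k-1}+\epsilon_k f_k$. Conditioning on $\epsilon_1,\dots,\epsilon_{k-1}$ freezes $S_{k-1}$, and since $\epsilon_k=\pm1$ with equal probability independently of the past, the displayed estimate with $u=S_{k-1}$ and $w=f_k$ gives
\[ \mathbb{E}\big[\|S_k\|^q \,\big|\, \epsilon_1,\dots,\epsilon_{k-1}\big]=\frac{\|S_{k-1}+f_k\|^q+\|S_{k-1}-f_k\|^q}{2}\ \leq\ \|S_{k-1}\|^q+\frac{c\,2^{q-2}}{q}\,\|f_k\|^q. \]
Taking total expectations yields $\mathbb{E}\|S_k\|^q\leq\mathbb{E}\|S_{k-1}\|^q+\tfrac{c\,2^{q-2}}{q}\|f_k\|^q$, and summing over $k=1,\dots,n$ with $S_0=0$ gives $\mathbb{E}\big\|\sum_{i=1}^n\epsilon_i f_i\big\|^q\leq\tfrac{c\,2^{q-2}}{q}\sum_{i=1}^n\|f_i\|^q$, which is \eqref{eq:prop_rademacher_type_t} with $C=c\,2^{q-2}/q$.

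The only delicate point is the polar bookkeeping in the first reduction: one must feed $\mathcal{C}^\circ$ (not $\mathcal{C}$) into Theorem~\ref{th:UC_set_and_support} so that the function shown to be uniformly smooth is a power of $\|\cdot\|$ itself rather than of its dual norm. Everything else --- the $\lambda=\tfrac12$ specialization and the telescoping of expectations --- is immediate, so there is no genuine obstacle; the value of a full write-up is just in tracking the constant explicitly.
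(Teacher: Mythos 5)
Your proof is correct and follows essentially the same route as the paper's: pass from uniform smoothness of the space to uniform convexity of the polar via Proposition~\ref{prop:Duality_Polar_UC_US}, invoke the equivalence \ref{itm:modulus_UC}$\iff$\ref{itm:global_support_smoothness} of Theorem~\ref{th:UC_set_and_support} (applied to $\mathcal{C}^\circ$) to deduce that $\|\cdot\|^q$ is a uniformly smooth convex function, extract a parallelogram estimate, and conclude by an expectation-telescoping argument over the Rademacher signs. The only cosmetic differences are that you specialize the zero-order characterization at $\lambda=\tfrac12$ where the paper sums the two first-order inequalities at $\pm h$ (both yield the same parallelogram bound up to the constant-equivalence inherent in Definition~\ref{def:uniformly_smooth_functions}), and you phrase the final step as conditioning on $\epsilon_1,\dots,\epsilon_{k-1}$ and telescoping rather than as an explicit induction on $n$; you also make the polar bookkeeping ($\mathcal{C}^\circ$ into Theorem~\ref{th:UC_set_and_support}, bipolar giving $\sigma_{\mathcal{C}^\circ}=\|\cdot\|$) more explicit than the paper does, which is a welcome clarification.
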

\begin{proof}[Proof of Proposition \ref{prop:uniform_smoothness_rademacher}]
Let $p\geq 2$ s.t. $1/p + 1/q = 1$.
Assume that $(\mathbb{R}^m,\|\cdot\|)$ is $(\alpha, q)$-uniformly smooth with $\alpha>0$ and $q\in]1,2]$.
Then, with Proposition \ref{prop:Duality_Polar_UC_US} \ref{itm:US_Polar_UC}, we have that $(\mathbb{R}^m, \|\cdot\|_\star)$ is $(1/(2p(2\alpha q))^{1/(q-1)},p)$-uniformly convex.
From equivalence between \ref{itm:modulus_UC} and \ref{itm:global_support_smoothness} in Theorem \ref{th:UC_set_and_support}, we finally have that $\|\cdot\|^q$ is $(c^\prime,q)$-uniformly smooth w.r.t. $\|\cdot\|$ (where $c^\prime$ only depends only on $(p,\alpha)$).
By the first-order definition of the uniform smoothness of $\|\cdot\|^q$, we have for any $h\in\mathbb{R}^m$
\begin{equation*}
   \left\{
    \begin{split}
    \|x+h\|^q \leq \|x\|^q + \langle \nabla \|\cdot\|^q(x); h\rangle + \frac{c^\prime}{q}\|h\|^q \\
    \|x-h\|^q \leq \|x\|^q - \langle \nabla \|\cdot\|^q(x); h\rangle + \frac{c^\prime}{q}\|h\|^q.
    \end{split}
   \right.
\end{equation*}
Summing these, we obtain for any $(x,h)\in\mathbb{R}^m$
\begin{equation*}
\|x+h\|^q + \|x-h\|^q - 2 \|x\|^q \leq \frac{2c^\prime}{q}\|h\|^q.
\end{equation*}
We now repeat the very same inductive argument as in \citep[Lemma 5.9.]{deville1993smoothness} and prove for any $n\geq 1$, any finite sequence of i.i.d. Bernoulli random variables $(\epsilon_i)$ and elements $(x_i)$ of $\mathbb{R}^m$ of size $n$ that
\begin{equation}\label{eq:inductive_property}
\mathbb{E}_{(\epsilon_i)}\Big( \big\| \sum_{i=1}^n{\epsilon_i x_i}\big\|^q \Big) \leq \frac{c^\prime}{q} \sum_{i=1}^{n} \|x_i\|^q.
\end{equation}
It is trivial for $n=1$. Assume \eqref{eq:inductive_property} is true for $n > 1$. We have
\begin{eqnarray*}
\mathbb{E}_{(\epsilon_i)}\Big(\big\| \sum_{i=1}^{n+1}{\epsilon_i x_i}\big\|^q\Big) &=& \frac{1}{2} \mathbb{E}_{(\epsilon_i)}\Big(\big\| \sum_{i=1}^{n}{\epsilon_i x_i} + x_{n+1}\big\|^q + \big\| \sum_{i=1}^{n}{\epsilon_i x_i} - x_{n+1}\big\|^q\Big)\\
 &\leq& \frac{1}{2} \mathbb{E}_{(\epsilon_i)}\Big( 2 \big\|\sum_{i=1}^{n}{\epsilon_i x_i}\big\|^q + 2\frac{c^\prime}{q} \|x_{n+1}\|^q \Big)\\
 &\leq & \mathbb{E}_{(\epsilon_i)}\Big( \big\|\sum_{i=1}^{n}{\epsilon_i x_i}\big\|^q \Big) + \frac{c^\prime}{q}  \|x_{n+1}\|^q \leq \frac{c^\prime}{q}  \sum_{i=1}^{n+1}{\|x_i\|^q}.
\end{eqnarray*}
Hence, $(\mathbb{R}^m, \|\cdot\|)$ is Rademacher of power type $(c^\prime/q, q)$.
\end{proof}

To the best of our knowledge, \cite{donahue1997rates} first points out the link between uniform convexity and the Rademacher type of the space in a learning framework. While the Rademacher type (resp. cotype) property is weaker than uniform smoothness (resp. convexity), establishing generalization results with uniform convexity/smoothness properties, as in \citep[Theorem 1]{kakade2009complexity} makes the assumptions much easier to interpret. This seems not to have been exploited directly to obtain upper bounds on Rademacher constants. We now extend the results of \citep[Theorem 1]{kakade2009complexity} using that insight.

\begin{theorem}\label{th:uniform_convexity_generalization}
Let $p\geq 2$ and $q\in]1,2]$ s.t. $\frac{1}{p} + \frac{1}{q}=1$.
Consider $\mathcal{F}_{\mathcal{C}} = \big\{f:x\in\mathcal{X}\rightarrow\langle x; w \rangle ~|~\|w\|_{\mathcal{C}}\leq 1\big\}$, where $\mathcal{C}$ is a centrally symmetric compact convex set with non-empty interior. 
Assume $\mathcal{C}$ is $(\alpha, p)$-uniformly convex with $p\geq 2$ and $\alpha>0$.
Then, there exists $C>0$ (a function of $p$ and $\alpha$) s.t. we have
\[
R_n(\mathcal{F}) \leq  \frac{C^{1/q} D}{n^{1/p}},
\]
where $D = \text{sup}_{x\in\mathcal{X}} \|x\|_{\mathcal{C}^\circ}$.
\end{theorem}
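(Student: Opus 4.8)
The plan is to express the Rademacher constant in terms of the dual norm $\|\cdot\|_{\mathcal{C}^\circ}$, transfer the uniform convexity of $\mathcal{C}$ into a Rademacher-type-$q$ inequality for the space $(\mathbb{R}^m,\|\cdot\|_{\mathcal{C}^\circ})$, and then convert the resulting $q$-th moment bound into a bound on the first moment via Jensen's inequality. First I would rewrite, using $\|\cdot\|_{\mathcal{C}^\circ}=\|\cdot\|_{\mathcal{C}}^\star$ and the central symmetry of $\mathcal{C}$,
\[
R_n(\mathcal{F}_{\mathcal{C}}) = \mathbb{E}_{(\epsilon_i),(x_i)}\Big[\sup_{\|w\|_{\mathcal{C}}\le 1}\Big|\frac1n\Big\langle \sum_{i=1}^n\epsilon_i x_i; w\Big\rangle\Big|\Big] = \frac1n\,\mathbb{E}_{(\epsilon_i),(x_i)}\Big[\Big\|\sum_{i=1}^n\epsilon_i x_i\Big\|_{\mathcal{C}^\circ}\Big].
\]

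Next, since $\mathcal{C}$ is $(\alpha,p)$-uniformly convex, Proposition~\ref{prop:Duality_Polar_UC_US}\ref{itm:UC_Polar_US} shows that $\mathcal{C}^\circ$ is $(\alpha',q)$-uniformly smooth for some $\alpha'$ depending only on $(p,\alpha)$; equivalently, $(\mathbb{R}^m,\|\cdot\|_{\mathcal{C}^\circ})$ is uniformly smooth of type $q$. Proposition~\ref{prop:uniform_smoothness_rademacher} then produces a constant $C>0$, a function of $(p,\alpha)$ only, such that for the fixed sequence $(x_i)$,
\[
\mathbb{E}_{(\epsilon_i)}\Big[\Big\|\sum_{i=1}^n\epsilon_i x_i\Big\|_{\mathcal{C}^\circ}^q\Big] \le C\sum_{i=1}^n\|x_i\|_{\mathcal{C}^\circ}^q \le C\,n\,D^q,
\]
where the last step uses $\|x_i\|_{\mathcal{C}^\circ}\le D$. (Alternatively, this moment inequality can be re-derived in a couple of lines from the equivalence \ref{itm:modulus_UC}$\iff$\ref{itm:global_support_smoothness} of Theorem~\ref{th:UC_set_and_support} applied to $\sigma_{\mathcal{C}}^q=\|\cdot\|_{\mathcal{C}^\circ}^q$, running the inductive argument in the proof of Proposition~\ref{prop:uniform_smoothness_rademacher}.)

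Finally, since $t\mapsto t^q$ is convex for $q\ge 1$, Jensen's inequality gives $\mathbb{E}[X]\le(\mathbb{E}[X^q])^{1/q}$; conditioning on $(x_i)$, applying the moment bound, and then taking the outer expectation (harmless, as the bound is deterministic once $\|x_i\|_{\mathcal{C}^\circ}\le D$) yields
\[
R_n(\mathcal{F}_{\mathcal{C}}) \le \frac1n\big(C\,n\,D^q\big)^{1/q} = \frac{C^{1/q}D}{n^{1-1/q}} = \frac{C^{1/q}D}{n^{1/p}},
\]
using $\frac1p+\frac1q=1$. I expect the only delicate point to be bookkeeping rather than substance: making sure the constant $C$ truly depends only on $(p,\alpha)$ and not on $m$ or $n$ — which is guaranteed by the quantitative, dimension-free nature of Propositions~\ref{prop:Duality_Polar_UC_US} and~\ref{prop:uniform_smoothness_rademacher} — and correctly handling the double expectation over $(\epsilon_i)$ and $(x_i)$ together with the supremum-over-the-unit-ball identification of the dual norm.
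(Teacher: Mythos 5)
Your proof is correct and follows essentially the same path as the paper's: rewrite the Rademacher constant as the expected dual norm of $\sum_i \epsilon_i x_i$, pass to $(\alpha,p)$-UC of $\mathcal{C}$ $\Rightarrow$ $q$-US of $\mathcal{C}^\circ$ via Proposition~\ref{prop:Duality_Polar_UC_US}\ref{itm:UC_Polar_US}, invoke Proposition~\ref{prop:uniform_smoothness_rademacher} for the Rademacher-type-$q$ moment bound, and finish with Jensen. The one small polish you add that the paper leaves implicit is the explicit appeal to central symmetry when dropping the absolute value in the supremum.
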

\begin{proof}[Proof of Theorem \ref{th:uniform_convexity_generalization}]
Since $\mathcal{C}$ is $(\alpha,p)$-uniformly convex of type $p$, the space normed with  $\|\cdot\|_{\mathcal{C}^\circ}$ is $(1/\big(2q(2\alpha p)^{q-1}\big), q)$-uniformly smooth, see Proposition \ref{prop:Duality_Polar_UC_US} \ref{itm:UC_Polar_US}. Hence with Proposition \ref{prop:uniform_smoothness_rademacher}, there exists $C>0$ (a function of $(\alpha,p)$) s.t. for any sequences $(x_i)$ and $(\epsilon_i)$ of size $n$, we have
\begin{equation}\label{eq:expectation_c_polar}
\mathbb{E}_{(\epsilon_i)}\Big(\Big\| \sum_{i}{\epsilon_i x_i}\Big\|^{q}_{\mathcal{C}^\circ}\Big) \leq C \sum \|x_i\|^{q}_{\mathcal{C}^\circ}.
\end{equation}
Then, recall that the Rademacher constant is defined as
\[
R_n(\mathcal{F}_{\mathcal{C}}) = \mathbb{E}_{(\epsilon_i),(x_i)} \Big[ \underset{f\in\mathcal{F}_{\mathcal{C}}}{\text{sup }} \frac{1}{n} \sum_{i=1}^{n}{f(x_i)\epsilon_i}\Big] = \mathbb{E}_{(\epsilon_i),(x_i)} \Big[\underset{\|w\|_{\mathcal{C}}\leq 1}{\text{sup }} \langle w ; \frac{1}{n} \sum_{i=1}^n{x_i \epsilon_i} \rangle \Big].
\]
By definition of the dual norm, we have $\langle w ; \frac{1}{n} \sum_{i=1}^n{x_i \epsilon_i} \rangle \leq \|w\|_{\mathcal{C}} \big\|\frac{1}{n} \sum_{i=1}^n{x_i \epsilon_i}\big\|_{\mathcal{C}^\circ}$, hence
\begin{equation*}
\mathbb{E}_{(\epsilon_i),(x_i)} \Big[\underset{\|w\|_{\mathcal{C}}\leq 1}{\text{sup }}\langle w ; \frac{1}{n} \sum_{i=1}^n{x_i\epsilon_i} \rangle \Big] \leq \mathbb{E}_{(\epsilon_i),(x_i)}\Big[\Big\|\frac{1}{n} \sum_{i=1}^n{x_i \epsilon_i}\Big\|_{\mathcal{C}^\circ}\Big].
\end{equation*}
Write $\theta = \Big\|\frac{1}{n} \sum_{i=1}^n{x_i \epsilon_i}\Big\|_{\mathcal{C}^\circ}$.
With $q\in]1,2]$, the function $|x|^{1/q}$ is concave on $\mathbb{R}^+$ and $\theta$ a non-negative random variable. Hence, we have $\mathbb{E}_{\epsilon} \Big[ \big(\theta^{q}\big)^{1/q}\Big] \leq \Big[\mathbb{E}_{\epsilon} (\theta^{q})\Big]^{1/q}$.
This implies that
\begin{eqnarray*}
\mathbb{E}_{(\epsilon_i)}\Big[\underset{\|w\|_{\mathcal{C}}\leq 1}{\text{sup }}\langle w ; \frac{1}{n} \sum_{i=1}^n{x_i\epsilon_i} \rangle \Big] &\leq& \frac{1}{n}\Big[\mathbb{E}_{(\epsilon_i)}\Big(\Big\| \sum_{i=1}^n{x_i \epsilon_i}\Big\|_{\mathcal{C}^\circ}^{q}\Big)\Big]^{1/q}.
\end{eqnarray*}
Hence with \eqref{eq:expectation_c_polar}, and taking the expectation w.r.t. the data points, we have
\begin{eqnarray*}
R_n(\mathcal{F}) &\leq & \frac{1}{n}\mathbb{E}_{(x_i)}\Big[C\sum_{i=1}^{n} \|x_i\|_{\mathcal{C}^\circ}^{q}\Big]^{1/q}\\
R_n(\mathcal{F}) &\leq & \frac{n^{1/q}C^{1/q}D}{n} = \frac{C^{1/q} D}{n^{1/p}},
\end{eqnarray*}
where $D = \text{sup}_{x\in\mathcal{X}} \|x\|_{\mathcal{C}^\circ}$.
\end{proof}

Upper bounds on Rademacher constants then induce generalization bounds depending on assumptions on the loss functions, see, \textit{e.g.}, \citep{kakade2009complexity}. Uniform convexity is stronger than Rademacher type properties, although a major difference is that uniform convexity admits (simple) localized definitions while martingale or Rademacher type properties are inherently global assumptions. To obtain results in learning theory that depend on the local behavior of the hypothesis class around the optimal solution, current approaches study the \emph{global} properties of a neighborhood of the hypothesis class around that solution, see, \textit{e.g.}, the local Rademacher constant \citep{bartlett2005local}. An alternative approach would then be to study local properties of the hypothesis class, for instance via local uniform convexity. This is one motivation for Theorem \ref{thm:local_UC_smoothness_gauge}. \citep{awasthi2020adversarial,Awasthi2020} prove tight upper-bound on the Rademacher constant of low-norm linear predictors with $\ell_p$ with $p>1$, which are instances of uniformly convex sets.

\subsection{Primal Averaging Frank-Wolfe on Uniformly Convex Sets}\label{ssec:PAFW}
The Primal Averaging Frank-Wolfe (PAFW) method was developed in \citep[Algorithm 4]{lan2013complexity} (see Algorithm \ref{algo:PAFW}) and replaces the projection oracle with a linear optimization oracle in Nesterov's accelerated algorithm.
We show here that the theoretical analysis of \citep[Corollary 1]{lan2013complexity}, holds in practice when the constraint set $\mathcal{C}$ is uniformly convex and the norm of the gradient functions are lower bounded on $\mathcal{C}$, \textit{i.e.}, $\text{inf}_{x\in\mathcal{C}}\|\nabla f(x)\|>c>0$.
To our knowledge, this is the first Frank-Wolfe algorithm with accelerated convergence rates relative to the baseline $\mathcal{O}(1/T)$, obtained with agnostic step-sizes, \textit{e.g.}, of the form $2/(k+2)$.

\begin{algorithm}[H]
	\caption{Primal Averaging Frank-Wolfe algorithm \label{algo:PAFW} \citep[Algorithm 4]{lan2013complexity}}
	\begin{algorithmic}
		\STATE{\textbf{Input:} $x_0\in\mathcal{C}$, $y_0\triangleq x_0$ and $(\alpha_k)\in[0,1]^\mathbb{N}$.}
		\FOR{$k=1,\ldots$}
		    \STATE $z_{k-1}=\frac{k-1}{k+1} y_{k-1} + \frac{2}{k+1} x_{k-1}.$
		    \STATE $x_k\in\text{argmax}_{v\in\mathcal{C}}\langle -\nabla f(z_{k-1}); v \rangle.$
		    \STATE $y_k = (1-\alpha_k) y_{k-1} + \alpha_k x_k$.
		\ENDFOR
	\end{algorithmic}
\end{algorithm}

\cite[Corollary 1]{lan2013complexity} yields an accelerated convergence rate of $\mathcal{O}(1/T^2)$ when some assumption is verified for the LMO. The following lemma shows that a property, similar to their assumption, holds for the LMO when the set $\mathcal{C}$ is uniformly convex. In Proposition \ref{prop:PAFW_rates}, we show how this implies new convergence rates for Primal Averaging Frank-Wolfe algorithm. This is a direct consequence of Theorem \ref{th:UC_set_and_support} \ref{itm:scaling_inequality}.
In the particular case where the set is strongly convex, this is a variation of \citep[(i) of Theorem 2.1.]{goncharov2017strong}.

\begin{lemma}\label{lem:assumption_verified}
Consider $\mathcal{C}$ a compact convex set in $\mathbb{R}^m$, $p\geq 2$, $\alpha>0$ and $(d_1,d_2)\in\mathbb{R}^m\setminus\{0\}$. 
Let $(v_1,v_2)\in\partial\mathcal{C}$ s.t. $d_i\in N_{\mathcal{C}}(v_i)$ for $i=1,2$.
If $\mathcal{C}$ is $(\alpha, p)$-uniformly convex, then we have
\[
\|v_1 - v_2\| \leq \frac{1}{\big[2\alpha\big(\|d_1\|_\star + \|d_2\|_\star\big)\big]^{1/(p-1)}} \|d_1 - d_2\|_\star^{1/(p-1)}.
\]
\end{lemma}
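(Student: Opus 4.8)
The plan is to derive this directly from the global scaling inequality, symmetrizing in the two points exactly as in the proof of the implication \ref{itm:scaling_inequality} $\implies$ \ref{itm:sphere_support_smoothness} of Theorem \ref{th:UC_set_and_support}. First I would observe that if $v_1 = v_2$ there is nothing to prove, since the right-hand side is nonnegative; so assume $v_1 \neq v_2$. The hypothesis $d_i \in N_{\mathcal{C}}(v_i)$ is equivalent to $v_i \in \text{argmax}_{v\in\mathcal{C}}\langle d_i; v\rangle$, which is precisely what is needed to invoke item \ref{itm:scaling_inequality}, and when $\mathcal{C}$ is the unit ball of $\|\cdot\|$ one reads $\|\cdot\|_\star = \|\cdot\|_{\mathcal{C}^\circ}$ throughout.

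Since $\mathcal{C}$ is $(\alpha,p)$-uniformly convex in the sense of Definition \ref{def:various_definition_UC} (that is, item \ref{itm:modulus_UC} holds with constant $\alpha$), Theorem \ref{th:UC_set_and_support} together with the constant bookkeeping recorded in the Remark following it yields the global scaling inequality with constant $2\alpha$: for every $x\in\mathcal{C}$ and $i\in\{1,2\}$,
\[
\langle d_i; v_i - x\rangle \;\geq\; 2\alpha\,\|d_i\|_\star\,\|v_i - x\|^p .
\]
Evaluating the $i=1$ inequality at $x=v_2$ and the $i=2$ inequality at $x=v_1$ and adding them, the left-hand sides combine to $\langle d_1 - d_2; v_1 - v_2\rangle$, so
\[
\langle d_1 - d_2; v_1 - v_2\rangle \;\geq\; 2\alpha\big(\|d_1\|_\star + \|d_2\|_\star\big)\,\|v_1 - v_2\|^p .
\]
Bounding the left-hand side by the dual-norm (Cauchy--Schwarz) inequality $\langle d_1 - d_2; v_1 - v_2\rangle \leq \|d_1 - d_2\|_\star\,\|v_1 - v_2\|$ and dividing through by $\|v_1 - v_2\| > 0$ gives
\[
\|d_1 - d_2\|_\star \;\geq\; 2\alpha\big(\|d_1\|_\star + \|d_2\|_\star\big)\,\|v_1 - v_2\|^{p-1},
\]
and isolating $\|v_1 - v_2\|$ and raising to the power $1/(p-1)$ yields exactly the claimed estimate.

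There is no genuine obstacle here: the statement is essentially a re-packaging of the scaling inequality, and the computation is just one summation, one application of the dual-norm inequality, and one division. The only point to watch is the constant, namely that the factor $2$ inside the bracket $[2\alpha(\|d_1\|_\star+\|d_2\|_\star)]$ is precisely the factor lost in passing from the modulus formulation \ref{itm:modulus_UC} (constant $\alpha$) to the scaling formulation \ref{itm:scaling_inequality} (constant $2\alpha$), as recorded in the Remark after Theorem \ref{th:UC_set_and_support}.
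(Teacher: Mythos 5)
Your proof is correct and follows essentially the same route as the paper's: apply the global scaling inequality (item \ref{itm:scaling_inequality} of Theorem \ref{th:UC_set_and_support}, with constant $2\alpha$) to each pair $(v_i,d_i)$, sum the two inequalities, bound the left-hand side by the dual-norm inequality, and divide by $\|v_1-v_2\|^{p-1}$. You are slightly more explicit than the paper in flagging the trivial $v_1=v_2$ case and in tracing the factor $2$ back through the Remark, but these are elaborations, not a different argument.
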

\begin{proof}[Proof of Lemma \ref{lem:assumption_verified}]
Because $\mathcal{C}$ is $(\alpha, p)$-uniformly convex, via \ref{itm:scaling_inequality} of Theorem \ref{th:UC_set_and_support} applied to $(v_i,d_i)$ for $i=1,2$, we obtain $\langle d_1 ; v_1 - v_2 \rangle \geq 2\alpha \|d_1\|_\star \|v_1 - v_2\|^p$ and $\langle d_2 ; v_2 - v_1 \rangle \geq 2\alpha \|d_2\|_\star \|v_2 - v_1\|^p$.
Summing the two inequalities implies that $\langle d_1 -d_2; v_1 - v_2 \rangle \geq 2\alpha \big(\|d_1\|_\star + \|d_2\|_\star \big) \|v_1 - v_2\|^p$.
Finally with Cauchy-Schwartz, we obtain $\|v_1 - v_2\| \leq \frac{1}{\big[2\alpha\big(\|d_1\|_\star + \|d_2\|_\star\big)\big]^{1/(p-1)}} \|d_1 - d_2\|_\star^{1/(p-1)}$.
\end{proof}
Hence, if the norms of the $d_i$ for $i=1,2$ are lower bounded by $c>0$, and the set is $(\alpha, p)$-uniformly convex with $p\in[2,3]$, we obtain that the condition described in \citep{lan2013complexity} is valid and of the form 
\[
\|v_1 - v_2\| \leq 1/(2 \alpha c)^{1/(p-1)}\|d_1 - d_2\|^{1/(p-1)}.
\]
When $\mathcal{C}$ is strongly convex and $\text{inf}_{x\in\mathcal{C}}\|\nabla f(x)\|>c$, it is already known that vanilla Frank-Wolfe with short steps or exact line-search converges linearly \citep{demyanov1970,dunn1979rates}.
The difference is PAFW has accelerated convergence results with agnostic step sizes, \textit{i.e.}, $\alpha_k=\frac{2}{k+2}$, which is much cheaper to implement and also do not require knowledge of $L$ in $f$.
When the set is uniformly convex but not strongly convex, \cite{kerdreux2020uc} obtain sublinear rates for vanilla Frank-Wolfe algorithms on uniformly convex set with short steps or exact line-search. The rates in Proposition \ref{prop:PAFW_rates} are strictly inferior to the $\mathcal{O}(1/T^{1/(1-2/p)})$ in \citep{kerdreux2020uc} obtained with the same structural assumptions. However, to the best of our knowledge, the accelerated convergence rates of Algorithm \ref{algo:PAFW} are the only accelerated convergence rates holding with oblivious step-sizes.

\begin{proposition}\label{prop:PAFW_rates}
Consider $f$ a convex $L$-smooth function w.r.t. $\|\cdot\|$ and $p\geq 2$, $\alpha>0$ .
Assume $\mathcal{C}$ is $(\alpha, p)$-uniformly convex and $\text{inf}_{x\in\mathcal{C}}\|\nabla f(x)\|>c>0$. 
Then the iterates $(y_k)$ of PAFW (Algorithm \ref{algo:PAFW}) with $\alpha_k=\frac{2}{k+2}$ satisfy
\begin{equation*}
  f(y_k) - f^* \leq 2L\Big(\frac{6L D_{\|\cdot\|}}{4\alpha c}\Big)^{1/(p-1)}\left\{
    \begin{split}
    &\frac{1}{k^{(p+1)/(p-1)}}&\text{when } p\in]3,+\infty[\\
    &\frac{\log(k+1)}{k^2}&\text{when } p=3\\
    &\frac{3-p}{p-1}\frac{1}{k^2}&\text{when } p\in[2;3[.
    \end{split}
   \right.
\end{equation*}
where $D_{\|\cdot\|}$ is the diameter of $\mathcal{C}$ w.r.t. $\|\cdot\|$.
\end{proposition}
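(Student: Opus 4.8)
The plan is to combine the linear-minimization-oracle (LMO) stability estimate of Lemma \ref{lem:assumption_verified} with the accelerated estimate-sequence analysis of PAFW from \citep[Corollary 1]{lan2013complexity}. Recall the structure of that analysis: with $\alpha_k = 2/(k+2)$ the natural weights $A_k = \Theta(k^2)$, $a_k = A_k - A_{k-1} = \Theta(k)$ satisfy $a_k/A_k = \alpha_k$, and the method produces a telescoped inequality of the shape
\[
f(y_k) - f^* \leq \frac{1}{A_k}\Big(\Phi_0 + \frac{L}{2}\sum_{i=1}^k A_i \,\|y_i - z_{i-1}\|^2\Big),
\]
where $\Phi_0$ is a fixed initialization term producing the baseline $\mathcal{O}(LD_{\|\cdot\|}^2/k^2)$ and $\|y_i - z_{i-1}\|$ is exactly the ``price'' of replacing Nesterov's prox step by the LMO. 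A direct computation with $z_{i-1} = \tfrac{i-1}{i+1}y_{i-1} + \tfrac{2}{i+1}x_{i-1}$ and $y_i = (1-\alpha_i)y_{i-1} + \alpha_i x_i$ shows $\|y_i - z_{i-1}\| \leq \tfrac{C}{i}\big(D_{\|\cdot\|} + \|x_i - x_{i-1}\|\big)$ for an absolute constant $C$; hence $A_i\|y_i - z_{i-1}\|^2 = \mathcal{O}(D_{\|\cdot\|}^2) + \mathcal{O}(\|x_i - x_{i-1}\|^2)$, the first part reproducing the usual $\mathcal{O}(1/k^2)$ term and the second part being where uniform convexity enters.

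Next I would bound $\|x_i - x_{i-1}\|$. Since $x_i$ and $x_{i-1}$ are LMO answers for the directions $-\nabla f(z_{i-1})$ and $-\nabla f(z_{i-2})$, we have $-\nabla f(z_{i-1})\in N_{\mathcal{C}}(x_i)$ and $-\nabla f(z_{i-2})\in N_{\mathcal{C}}(x_{i-1})$. Using $\inf_{x\in\mathcal{C}}\|\nabla f(x)\|_\star > c$, Lemma \ref{lem:assumption_verified} (equivalently item \ref{itm:scaling_inequality} of Theorem \ref{th:UC_set_and_support}) gives
\[
\|x_i - x_{i-1}\| \leq \frac{1}{(4\alpha c)^{1/(p-1)}}\,\big\|\nabla f(z_{i-1}) - \nabla f(z_{i-2})\big\|_\star^{1/(p-1)} \leq \Big(\frac{L\,\|z_{i-1} - z_{i-2}\|}{4\alpha c}\Big)^{1/(p-1)},
\]
the last step by $L$-smoothness of $f$. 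Then I would bound $\|z_{i-1} - z_{i-2}\|$ purely from the coefficient algebra (the weights defining $z$ change by $\mathcal{O}(1/i)$ and the iterates move by at most $D_{\|\cdot\|}$), obtaining $\|z_{i-1} - z_{i-2}\| \leq 6 D_{\|\cdot\|}/i$; crucially one uses only the crude bound $\|x_{i-1} - x_{i-2}\| \leq D_{\|\cdot\|}$ here so that no circular dependence on earlier LMO increments appears. This yields $\|x_i - x_{i-1}\| \leq \big(\tfrac{6LD_{\|\cdot\|}}{4\alpha c}\big)^{1/(p-1)} i^{-1/(p-1)}$ and hence $\|x_i - x_{i-1}\|^2 \lesssim \big(\tfrac{6LD_{\|\cdot\|}}{4\alpha c}\big)^{2/(p-1)} i^{-2/(p-1)}$.

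Plugging this into the telescoped bound leaves, up to the baseline $\mathcal{O}(LD_{\|\cdot\|}^2/k^2)$ term, an expression of the form $\tfrac{L}{A_k}\big(\tfrac{6LD_{\|\cdot\|}}{4\alpha c}\big)^{2/(p-1)}\sum_{i=1}^k i^{-2/(p-1)}$. The final step is to split on the value of $2/(p-1)$ relative to $1$: for $p<3$ the series $\sum_i i^{-2/(p-1)}$ is summable, so dividing by $A_k = \Theta(k^2)$ gives $\mathcal{O}(1/k^2)$ with a constant of order $\tfrac{3-p}{p-1}$ coming from the partial-sum estimate; for $p=3$ it is $\Theta(\log k)$, giving $\mathcal{O}(\log(k+1)/k^2)$; and for $p>3$ it is $\Theta\!\big(\tfrac{p-1}{p-3}k^{(p-3)/(p-1)}\big)$, so dividing by $\Theta(k^2)$ produces $\mathcal{O}\big(k^{-(p+1)/(p-1)}\big)$ since $(p-3)/(p-1) - 2 = -(p+1)/(p-1)$. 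Collecting constants gives the three cases of the statement.

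I expect the main obstacle to be not any single estimate but the careful bookkeeping: (i) invoking the exact recursion of \citep[Corollary 1]{lan2013complexity}, verifying that the LMO error enters precisely as a weighted $\|y_i - z_{i-1}\|^2$ term and that the gradient lower bound $c$ is exactly what is needed to activate Lemma \ref{lem:assumption_verified} for the consecutive LMO calls; and (ii) tracking the numerical constants through the three regimes so that they match the stated $\tfrac{3-p}{p-1}$, the power on $\tfrac{6LD_{\|\cdot\|}}{4\alpha c}$, and the absorption of the baseline term into the prefactor. A secondary subtlety, already flagged above, is that the bound on $\|z_{i-1}-z_{i-2}\|$ must avoid a recursive dependence on $\|x_{i-1}-x_{i-2}\|$ by using only the diameter there; an alternative, if one wants a tighter constant, is to bootstrap, but that complicates the sum and is not needed for the stated rates.
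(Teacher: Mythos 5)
Your proposal is correct and takes essentially the same route as the paper: bound consecutive LMO outputs $\|x_i-x_{i-1}\|$ via Lemma \ref{lem:assumption_verified} and the gradient lower bound, then feed this into Lan's analysis and evaluate $\sum_i i^{-2/(p-1)}$ by cases on $p$. The only deviation is cosmetic: you re-derive the reduction to $\sum\|x_i-x_{i-1}\|^2$ from the estimate-sequence form, whereas the paper simply cites \citep[Theorem 7]{lan2013complexity} for $f(y_k)-f^*\leq \frac{2L}{k(k+1)}\sum_{i\leq k}\|x_i-x_{i-1}\|^2$ and \citep[(4.3)]{lan2013complexity} for $\|\nabla f(z_{i-1})-\nabla f(z_{i-2})\|_\star\leq 6LD_{\|\cdot\|}/(i+1)$, arriving at the same bounds.
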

\begin{proof}[Proof of Proposition \ref{prop:PAFW_rates}]
From \citep[Theorem 7]{lan2013complexity}, we have
\[
f(y_k) - f^* \leq \frac{2L}{k(k+1)}\sum_{i=1}^{k}{\|x_i - x_{i-1}\|^2}.
\]
Then, from Lemma \ref{lem:assumption_verified}, since in Algorithm \ref{algo:PAFW}, $x_i$ are such that $x_i\in\text{argmax}_{x\in\mathcal{C}}\langle \nabla f(z_{k-1});x \rangle$, we have 
\[
\|x_{i} - x_{i-1}\| \leq \frac{1}{\big[2\alpha \big(\|\nabla f(z_{i-1})\|_\star + \|\nabla f(z_{i-2})\|_\star\big)\big]^{1/(p-1)}} \|\nabla f(z_{i-1}) - \nabla f(z_{i-2})\|_\star^{1/(p-1)}.
\]
Then, since $z_i\in\mathcal{C}$ and $\|\nabla f(z_{i-1}) - \nabla f(z_{i-2}) \|_\star \leq \frac{6L D_{\|\cdot\|}}{i+1}$ (see \citep[(4.3)]{lan2013complexity}, we have 
\[
\|x_i - x_{i-1}\| \leq \frac{\big(6LD_{ \|\cdot\|}\big)^{1/(p-1)}}{(4\alpha c)^{1/(p-1)}}\frac{1}{(i+1)^{1/(p-1)}}.
\]
Simple computations \citep{lan2013complexity} imply that
\begin{equation*}
  \sum_{i=1}^{k}{\frac{1}{i^{2/(p-1)}}} = \left\{
    \begin{split}
    &(k+1)^{\frac{p-3}{p-1}}~~\text{when } p\in[3,+\infty[\\
    &\text{log}(k+1)~~\text{when } p=3\\
    &\frac{3-p}{p-1}~~\text{when } p\in[2;3[.
    \end{split}
   \right.
\end{equation*}
Hence,
\begin{equation*}
  f(y_k) - f^* \leq 2L\Big(\frac{6L D_{\|\cdot\|}}{4\alpha c}\Big)^{1/(p-1)}\left\{
    \begin{split}
    &\frac{1}{k^{(p+1)/(p-1)}}~~\text{when } p\in[3,+\infty[\\
    &\frac{\text{log}(k+1)}{k^2}~~\text{when } p=3\\
    &\frac{3-p}{p-1}\frac{1}{k^2}~~\text{when } p\in[2;3[.
    \end{split}
   \right.
\end{equation*}
\end{proof}


\newpage 

\subsection*{Acknowledgment}
TK is very much indebted to Pierre-Cyril Aubin for the many discussions around uniform convexity in a learning framework. Research reported in this paper was partially supported through the Research Campus Modal funded by the German Federal Ministry of Education and Research (fund numbers 05M14ZAM,05M20ZBM) as well as the Deutsche Forschungsgemeinschaft (DFG) through the DFG Cluster of Excellence MATH+. AA is at the d\'epartement d'informatique de l'\'Ecole Normale Sup\'erieure, UMR CNRS 8548, PSL Research University, 75005 Paris, France, and INRIA. AA would like to acknowledge support from the {\em ML and Optimisation} joint research initiative with the {\em fonds AXA pour la recherche} and Kamet Ventures, a Google focused award, as well as funding by the French government under management of Agence Nationale de la Recherche as part of the "Investissements d'avenir" program, reference ANR-19-P3IA-0001 (PRAIRIE 3IA Institute).

{\small \bibliographystyle{alpha}
\bibsep 1ex
\bibliography{biblio_set_HEB,MainPerso.bib}}

\appendix

\end{document}